	\newtheorem{thm}{Theorem}[section]
	\newtheorem{cor}[thm]{Corollary}
	\newtheorem{lem}[thm]{Lemma}
	\newtheorem{prop}[thm]{Proposition}
	\theoremstyle{definition}
	\theoremstyle{remark}
	\newtheorem{rem}[thm]{Remark}
	\numberwithin{equation}{section}
	\numberwithin{equation}{section}
	\newcommand{\mbb}{\mathbb}
	\newcommand{\ov}{\overline}
	\newcommand{\ep}{\epsilon}
	\newcommand{\no}{\noindent}
	\newcommand{\Om}{\Omega}
	\newcommand{\De}{\Delta}
	\newcommand{\cal}{\mathcal}
	\newcommand{\norm}[1]{\|#1\|}
	\newcommand{\short}[1]{\textit{Short}  $\mathbb C^{#1}$}
	\newcommand{\om}[1]{\Omega_{\{#1_n\}}}
	\newcommand{\J}[1]{J^+_{\{#1_n\}}}
	\newcommand{\K}[1]{K^+_{\{#1_n\}}}
	\newcommand{\seq}[1]{\{#1_n\}}
\begin{document}
	\title{Examples of non--autonomous basins of attraction--II}
	\keywords{\short{k}, Fatou--Bieberbach domain, Non--autonomous basin}
	\subjclass{Primary : 32H02  ; Secondary : 32H50}
	
	\author{Sayani Bera} 
	\address{Sayani Bera: School of Mathematics, Ramakrishna Mission Vivekananda Educational and Research Institute, 711202, West Bengal, India}
	\email{sayanibera2016@gmail.com}
	
	\pagestyle{plain}
	\begin{abstract}
	The aim of this article is to enlarge the list of examples of non--autonomous basins of attraction from \cite{ShortCk} and at the same time explore some other properties that they satisfy. For instance, we show the existence of countably many disjoint \short{k}'s in $\mbb C^k.$ We also construct a \short{k} which is not Runge and exhibit yet another example whose boundary has Hausdorff dimension $2k.$
	\end{abstract}
	\maketitle
	\section{Introduction}
\no We continue our work on non--autonomous basins of attraction, in particular on \short{k}'s from \cite{ShortCk}. Recall that a \short{k} is a proper subdomain of  $\Omega \subset \mbb C^k$ satisfying the following properties:
\begin{itemize}
\item[(i)] $\Omega =\bigcup \Omega_n$, where each $\Om_n$ is biholomorphic to the ball $B^k(0;1)$,
\item[(ii)] The infinitesimal Kobayashi metric vanishes identically, i.e., $k_{\Omega} \equiv 0$.
\item[(iii)] $\Omega$ admits a non--constant plurisubharmonic function that is bounded above.
\end{itemize}
These were first constructed by Forn\ae ss \cite{ShortC2} who showed that they can be obtained as non--autonomous basins of attraction of a sequence of automorphisms $\seq{F} \in \mathsf{ Aut}_0(\mbb C^k)$ of the form
\begin{align}\label{short k} 
 F_n(z_1,z_2,\hdots,z_k)=(z_1^d+ a_n z_k,a_n z_1,\hdots,a_n z_{k-1})
 \end{align}
where $0<|a_{n+1}|< |a_n|^d <1$ for every $n \ge 0.$ 

\medskip\no Recall that a sequence of automorphisms $\seq{F}\in \mathsf{ Aut}_0(\mbb C^k)$ is said to be \textit{uniformly bounded} at the origin if there exists real constants $0< C<D<1$ and $r>0$ such that 
\[
C\|z\| \le  \|F_n(z)\| \le D\|z\|
\]
for every $z \in B^k(0;r)$ and $n \ge 0.$ 

\medskip\no Note that the maps $F_n$ in (\ref{short k}) are not uniformly bounded at the origin. The purpose of \cite{ShortCk} was to give examples of \short{k}'s that were motivated by the existing examples of Fatou--Bieberbach domains. Here we will extend this list by providing more examples of \short{k}'s -- in fact these examples will be  biholomorphic images of non--autonomous basins of sequences of automorphisms satisfying the {\it uniform upper--bound} condition. Here, a sequence $\seq{F} \subset \mathsf{ Aut}_0(\mbb C^k)$ is said to satisfy the \textit{uniform upper--bound} condition at the origin if there exists $r>0$ and $0<C<1$ such that
$$\|F_n(z)\|< C\|z\|$$
for every $z \in B^k(0;r).$ Henceforth, the basin of attraction at the origin of such a sequence will always be denoted by $\om{F}.$ The results are organized in the following sequence:

\medskip \no  
In Section \textbf{2}, we show that there exist countably many disjoint \short{k}'s in $\mbb C^k$, $k \ge 2$ and there exist \short{k}'s which are not Runge whenever $k \ge 4.$ These results follow directly as an application of the fact that Fatou--Bieberbach domains with the aforementioned properties are known to exist from Rosay--Rudin \cite{RR1} and Wold \cite{Runge} respectively.

\medskip\no 
In Section \textbf{3}, we give two alternative methods to construct biholomorphic images of non--autonomous basins of attraction.

\medskip\no The first result here is specific to \short{2}'s. However it can be proved for $k \ge 2$, from Remark \ref{shift}. Now \short{k}'s are constructed as the non--autonomous basin of attraction of sequence $\seq{F} \in \mathsf{ Aut}_0(\mbb C^k)$ where $F_n$'s are as in (\ref{short k}). Our aim is to show that the sequence $\seq{F}$ can involve higher order terms, i.e.,
\begin{align*}
 F_n(z_1,z_2,\hdots,z_k)=(z_1^d+o(z_1^{d+1})+ a_n z_k,a_n z_1,\hdots,a_n z_{k-1})
 \end{align*}
where $0<|a_{n+1}|< |a_n|^d <1$ for every $n \ge 0$ and still the basin of attraction at the origin (i.e., $\om{F}$) is a \short{k}. Theorem \textbf{\ref{poly_short}} is achieved as an effort towards proving the same which is stated as follows:
\begin{thm}\label{poly_short}
Let $\{a_n\}$ be a strictly positive sequence in $\mbb R$ such that $\lim_{n \to \infty} a_n^{-2^n}=0$ and $0<a_{n+1}<a_n^2 <1.$ If $\{F_n\} \subset \mathsf{ Aut}_0(\mbb C^2)$ of the form
\[F_n(z_1,z_2)=(a_n z_2+z_1^2P(z_1),a_n z_1)\] where $P$ is a polynomial in one variable with positive coefficients and $P(0)=c_0 > 0$ then the basin of attraction at the origin (i.e., $\Omega_{\{F_n\}}$) is a \short{2}.
\end{thm}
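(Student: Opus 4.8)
The plan is to verify, for $\Omega:=\om{F}$, the three conditions defining a \short{2}; write $F^n:=F_n\circ\cdots\circ F_0$, let $(x_n(z),y_n(z)):=F^n(z)$, and note that $\Omega=\{z\in\mbb C^2:F^n(z)\to 0\}$, each $x_n$ is a polynomial in $z$, and the orbit satisfies $y_{n+1}=a_{n+1}x_n$ and $x_{n+1}=a_{n+1}a_nx_{n-1}+x_n^2P(x_n)$. I would first observe that $\seq F$ satisfies the uniform upper--bound condition at the origin: since $F_n(z_1,z_2)=(a_nz_2,a_nz_1)+(z_1^2P(z_1),0)$ and $M_r:=\sup_{|z_1|\le r}|P(z_1)|<\infty$, one has $\|F_n(z)\|\le a_0\|z\|+M_r\|z\|^2\le(a_0+M_rr)\|z\|$ on $B:=B^2(0;r)$, so for $r$ small enough $C:=a_0+M_rr<1$ and $\|F_n(z)\|<C\|z\|$ on $B$ for every $n$. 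In the standard way this gives $B\subset\Omega$, $F_n(B)\subset B$, and an increasing exhaustion $\Omega=\bigcup_n\Omega_n$ with $\Omega_n:=(F^n)^{-1}(B)$ each biholomorphic to $B\cong B^2(0;1)$ and $\Omega$ connected --- this is (i). For (ii), on $B$ one has $DF_n(w)=\begin{pmatrix}(w_1^2P(w_1))' & a_n\\ a_n & 0\end{pmatrix}$ with $(w_1^2P(w_1))'=O(|w_1|)$, so after shrinking $r$, $\|DF_n\|\le\tfrac12$ on $B$ for large $n$; since the orbit of any compact $K\subset\Omega$ enters and stays in $B$ after a number of steps bounded on $K$, the chain rule gives $\|DF^n\|\le C_K2^{-n}\to 0$ uniformly on $K$, and applying the distance--decreasing property of the Kobayashi metric to $(F^n)^{-1}\colon B\to\Omega_n$ at $F^n(p)$ in the direction $DF^n(p)v$ yields $k_\Omega(p,v)\le k_B(F^n(p),DF^n(p)v)\to 0$, i.e. $k_\Omega\equiv 0$. (Both (i) and (ii) are really general features of basins of uniform upper--bound sequences.)

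The substantive point is (iii). I would take as candidate
\[
u(z):=\limsup_{n\to\infty}\frac1{2^n}\log|x_n(z)|.
\]
Each $2^{-n}\log|x_n|$ is plurisubharmonic, and since orbits converge to $0$ uniformly on compacta, $u$ is locally bounded above --- in fact $|x_n|<1$ eventually forces $u\le 0$ on $\Omega$ --- so the upper--semicontinuous regularization $u^*$ is plurisubharmonic on $\Omega$ and bounded above. It remains to show $u^*$ is non--constant, and for this one analyses the recursion $x_{n+1}=a_{n+1}a_nx_{n-1}+x_n^2P(x_n)$. Using $P(0)=c_0>0$ and the positivity of the coefficients of $P$ one has $\tfrac{c_0}{2}|x_n|^2\le|x_n^2P(x_n)|\le P(|x_n|)|x_n|^2\le 2c_0|x_n|^2$ once $|x_n|$ is small; feeding this into the recursion and using the decay hypothesis on $\seq a$ --- which makes $a_{n+1}a_n$ negligible against a fixed power of the orbit size --- one shows that past some step the quadratic term dominates the linear one, so that $|x_n|\asymp|x_{n-1}|^2$ and $x_{n+1}/x_n^2\to c_0$. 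Consequently $2^{-n}\log|x_n(z)|$ converges locally uniformly on a suitable open set $U\subset\Omega$, where $u$ is continuous and behaves as it does for the model map $x_{n+1}=c_0x_n^2$, for which $u(z)=\log\bigl(c_0|x_0(z)|\bigr)$ is plainly non--constant; thus $u$, and hence $u^*$, is non--constant (consistently, $u(0)=-\infty$ since the orbit of $0$ is constant). Combined with (i)--(ii), and since (iii) forces $\Omega\neq\mbb C^2$, this shows $\Omega$ is a \short{2}.

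The hard part will be the orbit estimate in (iii): showing that in $x_{n+1}=a_{n+1}a_nx_{n-1}+x_n^2P(x_n)$ the quadratic term eventually dominates, equivalently $x_{n+1}/x_n^2\to c_0$. This is precisely where the decay hypothesis on $\seq a$ and the positivity of the coefficients of $P$ are used, and one must run the induction so as to control the crossover from the linear to the quadratic regime and keep the multiplicative constants implicit in $|x_n|\asymp|x_{n-1}|^2$ from accumulating; everything else in the argument is soft.
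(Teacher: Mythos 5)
Your overall strategy matches the paper's: verify the three defining properties of a \short{2}, and for the plurisubharmonic function use a rescaled $\log$ of the first coordinate of the orbit. Your treatment of (i) and (ii) is a genuinely different and valid route: you observe (correctly) that $\seq{F}$ satisfies the uniform upper--bound condition, which immediately yields the ball exhaustion, and you get vanishing of the Kobayashi metric via the distance--decreasing property applied to $(F^n)^{-1}:B\to\Omega$ together with $\|DF^n\|\to 0$ on compacta; the paper simply defers these two points to \cite{ShortC2}. For the plurisubharmonic function the paper does not use $\limsup 2^{-n}\log|x_n|$ and its u.s.c.\ regularization; instead it sets $\phi_n=\max\{|f_1^n|,|f_2^n|,a_n\}$, adds a geometrically summable correction to $2^{-n}\log\phi_n$ to make it \emph{monotonically decreasing}, and so obtains a plurisubharmonic limit with no regularization step. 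Your version is workable but slightly heavier.

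The real issue is the non--constancy step, which you flag as the ``hard part'' and do not carry out. Your plan is a two--sided asymptotic $x_{n+1}/x_n^2\to c_0$ on an open set, which requires controlling the crossover from the regime where the linear term $a_{n+1}a_n x_{n-1}$ matters to the regime where $x_n^2P(x_n)$ dominates; this forces you to bootstrap an estimate of the form $|x_n|\asymp(c_0|x_0|)^{2^n}$ and keep the implicit constants from drifting, precisely because if $|x_n|$ ever becomes too small the linear term can temporarily win and reset the orbit. The paper sidesteps all of this by exploiting the positivity hypotheses in a much weaker, one--sided way: restrict to real $(x,y)$ with $x,y>0$. Then by induction $f_1^n(x,y)>0$ and $f_2^n(x,y)>0$, so one may simply \emph{discard} the (positive) linear term to get $f_1^{n+1}\ge (f_1^n)^2P(f_1^n)\ge c_0(f_1^n)^2$, whence $f_1^n(x,y)>c_0^{-1}(c_0x)^{2^{n+1}}$ by an elementary induction, and thus $\psi_n(x,y)$ stays bounded below by an $x$--dependent finite quantity. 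No dominance argument, no asymptotics, no crossover control. Since $\psi(0)=-\infty$, this already gives non--constancy. You should either switch to this one--sided argument (which is where the assumptions ``positive coefficients'' and $c_0>0$ really earn their keep) or supply the missing crossover bookkeeping, which in your formulation is substantially harder than the statement of the theorem would suggest.
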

\no However, it only says that $z_1^d$ can be replaced with a polynomial in $z_1$ provided there is a restriction on the coefficients of the polynomial and the order of convergence of $|a_n|$'s.

\medskip\no Next, we show that a non--autonomous basins of attraction that satisfies the \textit{uniform upper--bound} condition at a point is a parabolic domain realized as an increasing union of domains biholomorphic to the ball. Also, we give a sufficient condition for the existence of biholomorphisms between two such non--autonomous basin, i.e., given a sequence of automorphisms $\seq{F} \in \mathsf{ Aut}_0(\mbb C^k)$ satisfying the \textit{uniform upper--bound} condition at the origin, each function can be sufficiently perturbed in a small enough ball at the origin so that the basin of attraction of the resulting sequence is biholomorphic to $\om{F}.$ This result is motivated from push--out methods due to  Dixon--Esterle \cite{DE}, Glovebnik \cite{Gl2} and Stens\o nes \cite{St}, \cite{GlSt}.
\begin{thm}\label{transcendence}
Let $\{S_n\} \subset \mathsf{ Aut}_0(\mbb C^k)$, $k \ge 2$ satisfy the \textit{uniform upper--bound condition} and $\seq{F} \subset \mathsf{ Aut}_0(\mbb C^k)$. Then there exists a sequence of positive real numbers $\seq{\delta}$, $\delta_n \to 0$ as $n \to \infty$ and $r>0$ such that $\om{F} \cong \om{S}$ if
\[ \|F_n(z)-S_n(z)\|< \delta_n\] for every $z \in B^k(0;r).$
\end{thm}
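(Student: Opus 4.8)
The plan is to realise the desired biholomorphism as a locally uniform limit of the automorphisms
\[
\Phi_n := (S_0^n)^{-1}\circ F_0^n ,\qquad S_0^n := S_n\circ\cdots\circ S_0,\quad F_0^n := F_n\circ\cdots\circ F_0 ,
\]
so everything rests on controlling how fast these converge as the tolerances $\delta_n$ shrink. First I would record the structural facts forced by the hypotheses: if each $\delta_n$ is taken below $(1-C)r$, then $\|F_n(z)\|<C\|z\|+\delta_n<r$ on $B^k(0;r)$, so $F_n(B^k(0;r))\subset B^k(0;r)$; iterating this inequality (together with $\delta_n\to 0$) gives $F_0^n(w)\to 0$ for every $w\in B^k(0;r)$. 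Hence $B^k(0;r)\subset\om{F}$ and $\om{F}=\bigcup_n (F_0^n)^{-1}(B^k(0;r))$ is an increasing union of domains biholomorphic to the ball; the same holds verbatim for $\om{S}$, and in addition $\|S_0^n(z)\|<C^{n+1}r$ on $B^k(0;r)$, so $S_0^n$ and $(S_0^n)^{-1}$ have controlled contraction/expansion rates near the origin. This already yields the ``parabolic domain, increasing union of balls'' picture mentioned before the statement.

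For the convergence of $\Phi_n$, fix a compact $L\subset\om{F}$; by the exhaustion above there is $N=N(L)$ with $F_0^n(L)\subset B^k(0;r/2)$ for all $n\ge N$. Writing $w=F_0^n(\zeta)$ and using $S_0^n(\Phi_n(\zeta))=w$, a short computation gives
\[
\Phi_{n+1}(\zeta)=(S_0^n)^{-1}\!\bigl((S_{n+1})^{-1}(F_{n+1}(w))\bigr),
\]
and since $\|F_{n+1}(w)-S_{n+1}(w)\|<\delta_{n+1}$ one gets $\|(S_{n+1})^{-1}(F_{n+1}(w))-w\|\le M_{n+1}\delta_{n+1}$ and then $\|\Phi_{n+1}(\zeta)-\Phi_n(\zeta)\|\le L_n M_{n+1}\delta_{n+1}$, where $M_{n+1}$, $L_n$ are Lipschitz constants of $(S_{n+1})^{-1}$ and $(S_0^n)^{-1}$ on the fixed ball $B^k(0;r)$. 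The crucial point is that $M_{n+1}$ and $L_n$ depend only on $S_0,\dots,S_{n+1}$; so choosing $\seq{\delta}$ decreasing and small enough that $L_nM_{n+1}\delta_{n+1}\le 2^{-n}$ (and below $(1-C)r$) makes $\Phi_n$ uniformly Cauchy on every compact subset of $\om{F}$. Thus $\Phi_n\to\Phi$ locally uniformly on $\om{F}$ with $\Phi$ holomorphic and $\Phi(0)=0$.

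Next I would show $\Phi$ is a biholomorphism onto an open set. Each $\Phi_n$ is an automorphism of $\mbb C^k$, and a telescoping estimate for the differentials at the origin, using the Cauchy bound $\|DF_n(0)-DS_n(0)\|\le\delta_n/r$, shows that $D\Phi_n(0)$ converges to a matrix within distance $1/2$ of the identity (shrink $\seq{\delta}$ further if needed); in particular $D\Phi(0)$ is invertible, so $\Phi$ is non-degenerate. A locally uniform limit on the connected domain $\om{F}$ of injective holomorphic maps is injective unless its Jacobian vanishes identically, so $\Phi$ is injective, hence a biholomorphism onto the open set $U:=\Phi(\om{F})$.

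Finally, to identify $U$ with $\om{S}$, splitting $S_0^n=S_{N+1}^n\circ S_0^N$ and $F_0^n=F_{N+1}^n\circ F_0^N$ yields, for every $N\ge 0$ and every $z\in\om{F}$, the identity
\[
\Phi(z)=(S_0^N)^{-1}\!\bigl(\Phi^{(N)}(F_0^N(z))\bigr),
\]
where $\Phi^{(N)}$ is the analogous limit built from the shifted sequences $\{S_{N+1+j}\}$, $\{F_{N+1+j}\}$. Since $\Phi^{(N)}$ is controlled by exactly the same estimates, one may choose $\seq{\delta}$ so that in addition $\|\Phi^{(N)}-\mathrm{id}\|<r/16$ on $B^k(0;r/2)$ for every $N\ge 0$. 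Then, for $z\in\om{F}$, picking $N$ with $F_0^N(z)\in B^k(0;r/4)$ gives $\Phi^{(N)}(F_0^N(z))\in B^k(0;r)$, hence $\Phi(z)\in(S_0^N)^{-1}(B^k(0;r))\subset\om{S}$; conversely, given $w\in\om{S}$, pick $M$ with $S_0^M(w)\in B^k(0;r/8)$, and use that a holomorphic map within $r/16$ of the identity on $B^k(0;r/2)$ covers $B^k(0;r/4)$ (a Brouwer/Rouch\'e argument) to solve $\Phi^{(M)}(v)=S_0^M(w)$ with $v\in B^k(0;r/2)$; then $\zeta:=(F_0^M)^{-1}(v)\in\om{F}$ satisfies $\Phi(\zeta)=(S_0^M)^{-1}(\Phi^{(M)}(v))=w$. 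Hence $U=\om{S}$ and $\Phi\colon\om{F}\to\om{S}$ is a biholomorphism. I expect the main obstacle to be precisely this book-keeping: producing a single decreasing sequence $\seq{\delta}$, depending only on $\{S_n\}$, that simultaneously dominates the Lipschitz constants in the convergence estimate, the differential estimate at $0$, and the near-identity bound for every shift $\Phi^{(N)}$ — which is possible only because the contracting compositions $S_0^n$ keep all the relevant evaluations inside $B^k(0;r)$, where the geometry is governed by $\{S_n\}$ alone and not by the as-yet-unknown $\{F_n\}$.
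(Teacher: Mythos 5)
Your proof is structurally the same as the paper's for the construction and convergence: you use exactly the paper's $\phi_n=S(n)^{-1}\circ F(n)$, derive the same containment $F_n(B^k(0;r))\subset B^k(0;r)$ from $\delta_n<(1-C)r$, and obtain uniform Cauchy convergence on compacta by the identical telescoping device (your $L_nM_{n+1}$ is the paper's $M_i\,\ep_{i+1}$ with $\ep_n<\ep^{n+1}/M_n$). Where you diverge is in the last two steps. For injectivity the paper invokes Hurwitz and then rules out an empty interior by comparing $\Omega_n^S=\phi_n(\Omega_n^F)$ with an $\eta$-neighbourhood of $\phi(\Omega_n^F)$; you instead estimate $D\Phi(0)$ directly from the Cauchy bound $\|DF_n(0)-DS_n(0)\|\le\delta_n/r$, which is a perfectly valid and somewhat more quantitative route. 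For the identification $\Phi(\om{F})=\om{S}$ the difference is more substantial: the paper argues via Hausdorff-distance comparisons between $\phi(\Omega_n^F)$ and $\Omega_n^S$ (and its argument implicitly leans on connectedness of $\om{S}$ when it assumes a $z\in\om{S}\setminus\phi(\om{F})$ admits a ball avoiding $\phi(\om{F})$), while you exploit the shift identity $\Phi=(S_0^N)^{-1}\circ\Phi^{(N)}\circ F_0^N$ and an open-mapping/Rouch\'e lemma for maps near the identity. This is genuinely different and arguably more transparent; it also makes the parabolicity picture explicit. The one place you need to be more careful than you let on is the uniformity of $\|\Phi^{(N)}-\mathrm{id}\|<r/16$ over all $N$: the Lipschitz data $L_j^{(N)}$ for $(S_{N+1}^j)^{-1}$ depend on the pair $(N,j)$, so you must choose $\delta_{j+1}$ small enough to dominate $\max_{0\le N\le j} L_j^{(N)}M_{j+1}$ (a finite maximum for each $j$, so this is achievable but should be stated). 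Likewise, your step $\zeta=(F_0^M)^{-1}(v)\in\om{F}$ for $v\in B^k(0;r/2)$ requires a couple more applications of $F_{M+1},F_{M+2},\ldots$ to push $v$ into $B^k(0;r_0)$ before the paper's contraction induction (with $r_0=Cr$, not $r$) takes over; this is routine but worth a sentence.
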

\no In Section \textbf{4}, Theorem \textbf{\ref{poly_short}} is applied to give a constructive proof of the existence of \short{k}'s with chaotic boundary, i.e., there exists a \short{k} in $\mbb C^k$ such that the upper box--dimension of the boundary is strictly greater than $2k-1$ However, an existential proof of a much more stronger result will be achieved as an application of Theorem \textbf{\ref{transcendence}}.

\medskip\no 
In Sections \textbf{5} and \textbf{6}, we apply Theorem \textbf{\ref{transcendence}} to obtain results about biholomorphic images of non--autonomous basins of attraction at a point satisfying the \textit{uniform upper--bound }condition. The analogs of these results for Fatou--Bieberbach domains are known to be true from \cite{PW}, \cite{Wold} and \cite{Runge}. Our methods are adopted from the techniques in these articles. The new ingredient that we used is Theorem \textbf{\ref{transcendence}}. Here are our results:
\begin{thm}\label{poly_con_1}
Let $K$ be a polynomially convex compact subset of $\mbb C^k$ and let $\{p_j\} \subset \mbb C^k \setminus K$ and $\seq{S} \in \mathsf{ Aut_0}(\mbb C^k)$ be a sequence that satisfies the uniform upper--bound condition at the origin. Then there exists a biholomorphism $\Phi: \om{S} \to \mbb C^k$, such that $\{p_j\} \subset \Phi(\om{S}) \subset \mbb C^k \setminus K$.
\end{thm}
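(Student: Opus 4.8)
\emph{Strategy.} The plan is to realize the target domain as the basin of a sequence $\seq F$ obtained from $\seq S$ by an arbitrarily small perturbation near the origin, and then to transport the biholomorphism type via Theorem~\ref{transcendence}. First I would apply that theorem to the uniform--upper--bound sequence $\seq S$ to obtain $r>0$ and a tolerance sequence $\seq\delta$ with $\delta_n\to 0$; shrinking the $\delta_n$ only strengthens the hypothesis of Theorem~\ref{transcendence}, so I may assume $\delta_n<(1-C)r$ for all $n$, where $0<C<1$ is a uniform bound constant for $\seq S$ on $B^k(0;r)$. Since $\Phi$ is a biholomorphism onto its image, I am free to precompose the whole problem with an affine automorphism of $\mbb C^k$ -- a translation followed by a large dilation -- and thereby, after replacing $K$ and $\{p_j\}$ by $\mu K$ and $\{\mu p_j\}$ with $\mu$ large (and $0\notin K$), reduce to the case $\ov{B^k(0;r)}\cap K=\emptyset$; I also assume, as one must, that $\{p_j\}$ is closed and discrete. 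It then suffices to build $\seq F\subset\mathsf{Aut}_0(\mbb C^k)$ with $\|F_n-S_n\|<\delta_n$ on $B^k(0;r)$ such that the basin $\om F$, viewed inside $\mbb C^k$, satisfies $\{p_j\}\subset\om F\subset\mbb C^k\sm K$; the map $\Phi$ is then the composition of the biholomorphism $\om S\to\om F$ furnished by Theorem~\ref{transcendence} with the inclusion $\om F\hookrightarrow\mbb C^k$ and the inverse of the initial affine map.

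\emph{Construction of $\seq F$.} I would take $F_n=\Psi_n\circ S_n$ with $\Psi_n\in\mathsf{Aut}_0(\mbb C^k)$. The uniform bound makes $b_n:=S_n(\ov{B^k(0;r)})\subset\ov{B^k(0;Cr)}$, with $S_n^{-1}(b_n)=\ov{B^k(0;r)}$, and the closeness requirement on $F_n$ becomes exactly that $\Psi_n$ be $\delta_n$--close to the identity on $b_n$. Writing $\mathcal F_m:=F_m\circ\cdots\circ F_0$, one checks (using $\delta_n<(1-C)r$) that $F_n(\ov{B^k(0;r)})\subset B^k(0;r)$ for every admissible $\Psi_n$; hence $B^k(0;r)\subset\om F$ always, every orbit that enters $\ov{B^k(0;r)}$ stays there and converges to $0$, and $\om F=\bigcup_m\mathcal F_m^{-1}(\ov{B^k(0;r)})$, an increasing union. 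Following the Rosay--Rudin/Wold push--out scheme of \cite{PW,Wold,Runge} with $\seq S$ in the role of the usual global contraction, I would choose the $\Psi_n$ inductively so as to maintain, with $K_m:=\mathcal F_m(K)$:
\begin{itemize}
\item[(a)] $K_m\cap B^k(0;R_m)=\emptyset$ with $R_m\uparrow\infty$, so that the $\mathcal F_m$--orbit of every point of $K$ escapes to infinity and $K\cap\om F=\emptyset$; and
\item[(b)] for each $j$, $\mathcal F_m(p_j)\in B^k(0;r)$ for some $m$, so that $p_j\in\om F$.
\end{itemize}
At stage $n$ the data to be moved are the polynomially convex compact $S_n(K_{n-1})$ -- disjoint from $b_n$, since $K_{n-1}$ misses $\ov{B^k(0;r)}=S_n^{-1}(b_n)$ -- and the current image of the next not-yet-captured point $p_j$, which also lies outside $b_n$; by Andersén--Lempert theory there is a $\Psi_n$ that is $\delta_n$--close to the identity on $b_n$, carries $S_n(K_{n-1})$ into $\{\norm z>R_n\}$, and drags that point into $B^k(0;r)\sm b_n$, all while being accurate enough on large balls that the tail of $\seq{\mathcal F}$ still converges on $\om F$. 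Since $F_n(\ov{B^k(0;r)})\subset\ov{B^k(0;r)}$ regardless of the choices, already--captured points remain captured, and the induction closes.

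\emph{Main obstacle.} Given (a)--(b), $\om F$ sits as required inside $\mbb C^k$ and Theorem~\ref{transcendence} supplies $\om S\cong\om F$, hence $\Phi$. The hard part will be the joint bookkeeping of the induction: each $\Psi_n$ must be minuscule on the tiny ball $b_n$ -- with $\delta_n$ possibly forced to decay very fast -- yet perform macroscopic surgery (ejecting $K_{n-1}$ to infinity, importing $p_j$) on sets that a priori can crowd the origin, and the unions $\ov{b_n}\cup S_n(K_{n-1})$, together with the arcs along which points are transported, must be kept polynomially convex at each stage so that Andersén--Lempert approximation applies. Maintaining the invariant that $K_{n-1}$ stays far from $0$ and topologically unlinked from the core ball (which limits how aggressively a point may be pulled inward at earlier stages) and coordinating the escape radii $R_m\to\infty$, the approximation errors, and the prescribed tolerances $\seq\delta$ with one another, is the technical heart; the preliminary dilation, which pushes $K$ out of $\ov{B^k(0;r)}$, is precisely what makes the base case of this induction available. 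These are the same difficulties handled in \cite{PW,Wold,Runge}; the single new ingredient is Theorem~\ref{transcendence}.
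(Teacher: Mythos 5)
Your overall architecture is right -- reduce by an affine change of coordinates, construct $\seq F$ as a small perturbation of $\seq S$ on $\ov{B^k(0;r)}$ with $\|F_n-S_n\|<\delta_n$, invoke Theorem~\ref{transcendence} to get $\om S\cong\om F$, and steer the $p_j$'s into the core ball while keeping $K$ out of the basin, using Theorem~\ref{poly_con} (Andersén--Lempert) for the surgery. But the invariant you propose differs from the paper's in a way that creates a real technical hole, and you also add a hypothesis that is both unnecessary and harmful.

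The paper does \emph{not} push $K$ to infinity. After the affine reduction it arranges $K_\ep\subset \ov{B^k(p;R)}$ with $\ov{B^k(p;R)}\cap \ov{B^k(0;r_0+\ep)}=\emptyset$, and the inductive invariant is that $F(j)(K)\subset K_{\ep_j}\subset \ov{B^k(p;R)}$ for all $j$, i.e.\ the forward images of $K$ stay \emph{approximately fixed} inside a single ball disjoint from $\bar B$. Since an orbit trapped in $\ov{B^k(p;R)}$ never enters $\bar B$, one gets $K\cap\om F=\emptyset$ without sending anything to infinity. The payoff is that polynomial convexity of $\bar B\cup F(i)(K)$ (and of $S_{i+1}(\bar B)\cup F(i)(K)$) is automatic from Remark~\ref{properties_poly_con}(iv): the two pieces are permanently separated by the disjoint convex compacts $\ov{B^k(0;r_0+\ep)}$ and $\ov{B^k(p;R)}$. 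By contrast, in your scheme you need $\ov{B^k(0;r)}\cup K_m$ (equivalently $\bar b_n\cup S_n(K_{n-1})$) to be polynomially convex at each stage, and once $K_m=\mathcal F_m(K)$ is an arbitrary automorphic image migrating to infinity there is no reason its convex hull avoids the origin; the union of two disjoint polynomially convex compacts is generally not polynomially convex, and the ``separated by disjoint convex compacts'' criterion is exactly what you lose. You flag this as the ``technical heart,'' but it is not a mere bookkeeping issue -- it is the step that must be designed around, and the paper's choice of invariant is precisely the design that makes it disappear.

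A second, smaller point: you assume ``as one must'' that $\{p_j\}$ is closed and discrete. This is not needed, and it would be fatal for Corollary~\ref{compact}, where $\{p_j\}$ is taken \emph{dense} in $\mbb C^k\setminus K$. The paper's induction only requires $F(j)(p_j)\in\bar B$ for each individual $j$ (after which $p_j\in\om F$ since $\bar B\subset\om F$ is forward invariant); it does not need to control $F(i)(p_j)$ simultaneously for all earlier $j$, so accumulation of the $p_j$ causes no trouble. You should drop that assumption.
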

\begin{cor}\label{compact}
Given a polynomially convex compact set $K$ and a sequence of automorphisms  $\seq{S} \in \mathsf{ Aut_0}(\mbb C^k)$ that satisfies the uniform upper--bound condition at the origin, there exists a biholomorphism $\Phi: \om{S} \to \mbb C^k$, such that $\Phi(\om{S})$ is dense in $\mbb C^k \setminus K.$
\end{cor}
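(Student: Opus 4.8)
The plan is to deduce this immediately from Theorem \ref{poly_con_1}. Since $\mbb C^k$ is second countable, its open subset $\mbb C^k \sm K$ is separable; as $K$ is compact, $\mbb C^k\sm K\ne\emptyset$, so I may fix a countable dense subset $\{p_j\}_{j\ge 1}$ of $\mbb C^k\sm K$. This is the only genuine choice to be made.

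Next, I apply Theorem \ref{poly_con_1} to the polynomially convex compact set $K$, this sequence $\{p_j\}\subset \mbb C^k\sm K$, and the given sequence $\seq{S}\in\mathsf{Aut}_0(\mbb C^k)$ satisfying the uniform upper--bound condition at the origin. This yields a biholomorphism $\Phi:\om{S}\to\mbb C^k$ with
\[
\{p_j\}\subset \Phi(\om{S})\subset \mbb C^k\sm K .
\]
Since $\Phi(\om{S})$ is a subset of $\mbb C^k\sm K$ containing the dense set $\{p_j\}$, it is itself dense in $\mbb C^k\sm K$, which is exactly the assertion.

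There is no real obstacle here once Theorem \ref{poly_con_1} is in hand: the substantive work (the push--out/Andersén--Lempert--type construction controlling $\Phi(\om{S})$ between a prescribed countable set and the complement of $K$) has already been done there, and Theorem \ref{poly_con_1} is already phrased for a countably infinite list $\{p_j\}$, so nothing changes in passing from finitely many target points to a dense sequence. I would only add the remark that upgrading ``dense in $\mbb C^k\sm K$'' to ``equal to $\mbb C^k\sm K$'' is a strictly harder statement (and not what this method produces), so density is the natural and expected conclusion.
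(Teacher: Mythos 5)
Your proof is correct and is essentially the same as the paper's: both choose a countable dense sequence $\{p_j\}$ in $\mathbb{C}^k \setminus K$ and feed it to Theorem \ref{poly_con_1} to get a biholomorphic image of $\om{S}$ containing $\{p_j\}$ and avoiding $K$, hence dense in $\mathbb{C}^k \setminus K$.
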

\begin{cor}\label{Runge}
Given a sequence of automorphisms  $\seq{S} \in \mathsf{ Aut_0}(\mbb C^k)$ that satisfies the uniform upper--bound condition at the origin, there exists a biholomorphism $\Phi:\om{S} \to \mbb C^k$  such that $\Phi(\om{S})$ is not Runge.
\end{cor}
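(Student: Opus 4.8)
\noindent We will deduce this from Theorem~\ref{transcendence} together with the known existence of a non--Runge Fatou--Bieberbach domain, adapting the construction of Wold~\cite{Runge}. Recall the relevant criterion: $\om{S}$ is Stein, being an increasing union of domains biholomorphic to the ball, so any biholomorphic image $D:=\Phi(\om{S})\subset\mbb C^k$ is Stein, and a Stein domain $D$ is Runge in $\mbb C^k$ if and only if the polynomial hull $\widehat L$ of every compact $L\subset D$ is contained in $D$; in particular $D$ fails to be Runge as soon as there are a compact $L\subset D$ and a point $q\in\widehat L\setminus D$. Thus it suffices to realize $\om{S}$ biholomorphically as a domain in $\mbb C^k$ with this feature.

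\medskip\noindent The plan is to reduce this to a perturbation of $\seq{S}$. By Theorem~\ref{transcendence} there are a sequence $\seq{\delta}$ with $\delta_n\to 0$ and an $r>0$ such that $\om{F}\cong\om{S}$ for every $\seq{F}\subset\mathsf{Aut}_0(\mbb C^k)$ with $\|F_n(z)-S_n(z)\|<\delta_n$ on $B^k(0;r)$. I will produce such a sequence $\seq{F}$ for which the basin $\om{F}$, sitting inside $\mbb C^k$ as the increasing union $\bigcup_n G_n^{-1}\bigl(B^k(0;r)\bigr)$ with $G_n=F_{n-1}\circ\cdots\circ F_0$, is not Runge; then $\Phi$ is the composition of the biholomorphism $\om{S}\to\om{F}$ given by Theorem~\ref{transcendence} with the inclusion $\om{F}\hookrightarrow\mbb C^k$. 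The crucial observation is that the bound $\delta_n$ pins $F_n$ down only on the fixed small ball $B^k(0;r)$ and decays to $0$, so away from that ball one has essentially unrestricted freedom: using Anders\'{e}n--Lempert approximation each $F_n$ can be taken $\delta_n$-close to $S_n$ on $B^k(0;r)$ (so that the attracting dynamics at the origin, and with it the hypothesis of Theorem~\ref{transcendence}, are preserved) while on a large ball $F_n$ is shaped so that the union $\bigcup_n G_n^{-1}\bigl(B^k(0;r)\bigr)$ reproduces the ``wrapping'' underlying Wold's non--Runge example.

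\medskip\noindent Carrying this out follows the scheme of~\cite{Runge}. One fixes in advance a point $q$ and a compact $L^0$ --- a suitably placed circle, or the boundary of an embedded analytic disc, chosen far enough from the origin that $\widehat{L^0}$ misses $B^k(0;r)$ --- with $q\in\widehat{L^0}$. Then $F_0,F_1,\dots$ are built inductively: having fixed $F_0,\dots,F_{n-1}$, hence the automorphism $G_n^{-1}$, one chooses $F_n$ ($\delta_n$-close to $S_n$ on $B^k(0;r)$, essentially free elsewhere) so that an increasing family of compact pieces $L_n\nearrow L^0$ is drawn into $\om{F}$ while the forward orbit of $q$ never enters $B^k(0;r)$, and so that the polynomial hull of $L_n$ already comes within $1/n$ of $q$. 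In the limit $L^0$ is a compact subset of $\om{F}$, $q\notin\om{F}$, and $q\in\widehat{L^0}$, so $\om{F}$ is not Runge.

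\medskip\noindent The main obstacle is precisely the control of polynomial hulls through the infinite composition: hulls are only upper semicontinuous, so the ``hole'' at $q$ is liable to be filled in under perturbation or in the limit, and the construction must force $\widehat{L_n}$ toward $q$ robustly at each finite stage while the automorphisms $F_n$ --- tied near the origin to the known contractions $S_n$ and adjusted by Anders\'{e}n--Lempert approximation elsewhere --- are kept from distorting the hulls that matter. Reconciling this local/global tension --- reproducing Wold's hull geometry globally while respecting the prescribed behaviour near the origin --- is the heart of the argument, and is exactly what the conclusion $\delta_n\to 0$ of Theorem~\ref{transcendence} is designed to accommodate.
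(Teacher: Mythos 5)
Your high-level plan --- realize $\om{S}$ biholomorphically as a basin $\om{F}$ containing a compact $L$ with $q\in\widehat L\setminus\om{F}$, by perturbing $\seq{S}$ within the tolerance $\seq{\delta}$ of Theorem~\ref{transcendence} --- is in the right circle of ideas, but it is not yet a proof. You explicitly leave open ``control of polynomial hulls through the infinite composition,'' and that is exactly where all the work lies; moreover you propose to rebuild the Wold push-out by hand without noticing that the paper has already packaged the necessary ``perturb $S_n$ while steering a prescribed compact'' induction a few lines earlier.

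The paper's actual proof is three lines and sidesteps the hull-tracking problem entirely. It invokes Corollary~\ref{compact} --- which, via Theorem~\ref{poly_con_1}, already encapsulates the induction you are sketching: at each step apply Theorem~\ref{poly_con} to get $F_n$ that is $\delta_n$-close to $S_n$ on $\bar B$ while pushing the forward image of a fixed polynomially convex compact outside the basin --- to obtain a biholomorphism $\Phi_1$ with $\Phi_1(\om{S})\subset\mbb C^*\times\mbb C^{k-1}$. It then post-composes with an automorphism $\Phi_2$ of $\mbb C^*\times\mbb C^{k-1}$ furnished by Wold's Theorem~\ref{Runge_pre}, chosen so that a ball $B^k(p;\ep)\subset\Phi_1(\om{S})$ is mapped to the set $Y$ with $0\in\widehat Y$. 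Setting $\Phi=\Phi_2^{-1}\circ\Phi_1$, the domain $\Phi(\om{S})$ lies in $\mbb C^*\times\mbb C^{k-1}$ and so misses the origin, yet contains $Y$ whose hull contains the origin; hence it is not Runge. No hull has to survive an infinite composition: the non-Runge feature is installed by a single automorphism after the basin has been placed in the punctured space.

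So the concrete gap in your proposal is twofold. First, you do not actually construct $\seq{F}$ with the required hull control --- you only describe what such a construction should achieve and candidly acknowledge the difficulty. Second, you overlook that Corollary~\ref{compact} together with Wold's automorphism lemma (Theorem~\ref{Runge_pre}) make the hand-built push-out unnecessary.
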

\begin{thm}\label{dense}
Given any sequence of automorphisms $\seq{S} \in \mathsf{ Aut}_0(\mbb C^k)$ that satisfy the uniform upper--bound condition at the origin and any $m \in \mbb N \cup \{\infty\}$ there exist $m-$biholomorphisms $\{\Phi_i: 1 \le i \le m\}$ such that the following hold:
\begin{itemize}
\item[(i)] $\Phi_i(\om{S}) \cap \Phi_j(\om{S})=\emptyset$ whenever $1 \le i \neq j \le m.$

\item[(ii)] Let $\Om=\cup \Phi_i(\om{S})$. For any $q \in \mbb C^k\setminus \Om$, $q \in \partial\Phi_i(\om{S})$ for every $1 \le i \le m.$
\end{itemize} 
\end{thm}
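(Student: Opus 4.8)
\medskip\noindent\emph{Plan of proof.} The plan is to adapt the ``juggling'' construction of Rosay--Rudin \cite{RR1} --- which, for every $m$, produces $m$ pairwise disjoint copies of $\mbb C^k$ in $\mbb C^k$ whose union omits exactly a set lying on all of their boundaries --- replacing the two analytic inputs used there by Theorem \ref{transcendence} and Theorem \ref{poly_con_1}. In \cite{RR1} one repeatedly uses (i) that an increasing union of biholomorphic images of $\mbb C^k$, the maps agreeing on an exhaustion, is again a copy of $\mbb C^k$, and (ii) that automorphisms can be prescribed so as to carry a polynomial polyhedron past a chosen point while being close to the identity on a large compactum; here the copies are of $\om{S}$ rather than of $\mbb C^k$, and the role of (i)--(ii) is played by the fact --- contained in the proof of Theorem \ref{poly_con_1} and powered by Theorem \ref{transcendence} --- that a biholomorphic copy of $\om{S}$ inside $\mbb C^k$ can be grown, past a prescribed point and around a prescribed polynomially convex compactum, while leaving untouched a given compact part of $\om{S}$ on which the embedding has already been fixed. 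It is enough to run a single construction in which the copies are \emph{activated} one at a time: truncating it after $m$ activations settles all $m \in \mbb N \cup \{\infty\}$ simultaneously. (One should note that for finite $m$ the conclusion cannot be obtained by deleting copies from an infinite family, as that enlarges the complement and can destroy (ii).)

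\medskip\noindent I would carry this out as an induction on a stage counter $s$. At stage $s$ I keep, for each already activated index $i$: a compact $E^s_i \Subset \om{S}$ with $E^s_i \subset E^{s+1}_i$ and $\bigcup_s E^s_i = \om{S}$; a biholomorphic embedding $\psi^s_i \colon \om{S} \hookrightarrow \mbb C^k$ with $\psi^{s+1}_i \equiv \psi^s_i$ on $E^s_i$; the finalized piece $L^s_i := \psi^s_i(E^s_i)$, subject to the invariant that the $L^s_i$ are pairwise disjoint and every union of a subfamily of them is polynomially convex; and a finite nested family of reserved points that are kept out of all the $L^s_i$ forever. A stage performs, dovetailed so that each task recurs infinitely often for each activated copy, one of: activating a new copy with its finalized piece inside a small region currently missed by all the $\psi^s_l(\om{S})$ (by Theorem \ref{poly_con_1}, with avoided set $\bigcup_l L^s_l$); enlarging some $E^s_i$ towards exhausting $\om{S}$; and, given a point $q$ from a fixed countable dense subset of $\mbb C^k$ not yet inside $\bigcup_l L^s_l$, either \emph{absorbing} a neighbourhood of $q$ into some $L^s_i$ or \emph{reserving} a new anchor point near $q$, in each case first extending every activated $\psi^s_i$ --- keeping it fixed on $E^s_i$ and avoiding the other $L^s_l$ --- so that its image comes within $1/s$ of the point in question; each such extension is a run of the argument of Theorem \ref{poly_con_1} with prescribed initial data and avoided polynomially convex compactum $\bigcup_{l \ne i} L^s_l$. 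The limits $\Phi_i := \lim_s \psi^s_i$ then exist and are injective and holomorphic on $\om{S}$ (by the freezing on $E^s_i$ and a normal--families argument), and these are the required biholomorphisms: (i) is the limit of the disjointness invariant, and for (ii), a point $q$ outside $\bigcup_l \Phi_l(\om{S})$ is, by the scheduling, a limit of reserved anchors; each anchor $c$ satisfies $c \notin \bigcup_l\Phi_l(\om{S})$ yet every neighbourhood of $c$ is eventually met by $L^s_i$, so $c \in \overline{\Phi_i(\om{S})}\setminus\Phi_i(\om{S}) = \partial\Phi_i(\om{S})$ for every $i$, and since $\partial\Phi_i(\om{S})$ is closed, $q \in \partial\Phi_i(\om{S})$ as well.

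\medskip\noindent The main obstacle is not a single estimate but the simultaneous bookkeeping: one must schedule the absorb/reserve alternative so that in the limit the reserved anchors are dense in the complement of $\bigcup_l \Phi_l(\om{S})$ --- needed for (ii) --- without forcing that complement to be all of $\mbb C^k$, and one must do so while keeping the finitely many finalized pieces pairwise disjoint and their subunions polynomially convex at every stage, so that the extension step stays applicable. The first point is the combinatorial heart of \cite{RR1} and is transplanted from there; the second is arranged by choosing each new finalized piece inside a polynomial polyhedron unlinked from the others, using the freedom in the push--out (Andersén--Lempert) step underlying Theorem \ref{poly_con_1}. The one genuinely new analytic ingredient, as throughout Theorems \ref{poly_con_1}--\ref{dense}, is Theorem \ref{transcendence}, which legitimises the ``grow a copy of $\om{S}$'' step; with it in place, the remaining points --- openness and disjointness of the images and the verification of (ii) --- are routine.
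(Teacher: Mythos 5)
Your plan and the paper's proof share the high--level ``dovetailed Rosay--Rudin juggling'' strategy, but they diverge in a structural way that matters. The paper never manufactures the biholomorphisms $\Phi_i$ by extending and freezing embeddings of $\om{S}$: instead it builds a \emph{single} sequence $\seq{F}\subset\mathsf{Aut}_0(\mbb C^k)$ together with a growing family of pairwise disjoint anchor balls $\ov{B^k(q_j;\rho_j)}$ on which each $F_n$ (for $n\ge j$) fixes $q_j$ and satisfies $\|F_n(z)-S_n(z-q_j)-q_j\|<\delta_n(\rho_j)$. The biholomorphisms appear only at the very end, in one stroke, by applying Theorem \ref{transcendence} together with Remark \ref{remark to transcendence} at each anchor: the basin of $\seq{F}$ at $q_i$ is biholomorphic to $\om{S^i}=S(i)(\om{S})\cong\om{S}$. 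Disjointness of the copies is then \emph{automatic} (basins of the same sequence at distinct attracting anchors cannot meet), and item (ii) is enforced directly by the inductive density hypothesis (vi) on the sets $F(i)^{-1}\big(\ov{B^k(q_j;\rho_j)}\big)$, rather than by a scheduling argument over reserved anchors.

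The genuine gap in your version is the exact freezing $\psi^{s+1}_i\equiv\psi^s_i$ on $E^s_i$, which is what you need for $\Phi_i=\lim_s\psi^s_i$ to exist. The only biholomorphism the available tools produce is $\phi=\lim_n S(n)^{-1}\circ F(n)$ from Theorem \ref{transcendence} (and, derived from it, the map in Theorem \ref{poly_con_1} whose proof you invoke). That limit depends on the \emph{entire} tail of $\seq{F}$: if you keep $F_0,\dots,F_{N-1}$ fixed and modify $F_n$ for $n\ge N$, the resulting $\phi$ still changes on every fixed compactum, by an amount which is only $O(\ep^N)$. So at best you get approximate freezing with a summable drift, never identity on $E^s_i$, and nothing in the proof of Theorem \ref{poly_con_1} supports the claim that you can ``run it with prescribed initial data'' while leaving a compact part of the embedding untouched. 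One could certainly replace exact freezing by a summable drift (plus a renormalisation), but then the verification of (ii) also changes --- the reserved anchors drift, and one must check they still land on every boundary --- and by that point one has essentially reconstructed the paper's scheme in less convenient coordinates. The paper's move of freezing the \emph{sequence} rather than the \emph{embedding}, and deferring the biholomorphism to a single application of Theorem \ref{transcendence} at the end, is precisely what sidesteps this difficulty and also trivialises (i).
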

\begin{thm}\label{hausdorff}
Given any sequence of automorphisms $\seq{S} \in \mathsf{ Aut}_0(\mbb C^k)$ that satisfy the uniform upper--bound condition at the origin, there exists a biholomorphism $\Phi: \om{S} \to \mbb C^k$ such that the Hausdorff dimension at any point in the boundary of $\Phi(\om{S})$ is $2k.$
\end{thm}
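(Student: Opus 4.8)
The plan is to combine Theorem~\textbf{\ref{transcendence}} with a category argument. Let $\seq{\delta}$ and $r>0$ be the data produced by Theorem~\textbf{\ref{transcendence}} applied to $\seq{S}$. It then suffices to construct \emph{one} sequence $\seq{F}\subset\mathsf{Aut}_0(\mbb C^k)$ with $\|F_n(z)-S_n(z)\|<\delta_n$ on $B^k(0;r)$ whose basin $\om{F}$ has boundary of Hausdorff dimension $2k$ at every one of its points: Theorem~\textbf{\ref{transcendence}} then gives $\om{F}\cong\om{S}$, and composing this biholomorphism with the inclusion $\om{F}\hookrightarrow\mbb C^k$ yields $\Phi$ with $\Phi(\om{S})=\om{F}$, hence $\pa\Phi(\om{S})=\pa\om{F}$. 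So the whole problem is to engineer a wild-boundary basin inside the perturbation budget $\seq{\delta}$.

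Next I would record the self-similar geometry of such a basin and build the perturbation. Choosing $\rho\le r$ small enough that $F_n(B^k(0;\rho))\subset B^k(0;\rho)$ for all $n$ (possible once the perturbation is taken small, by the uniform upper--bound condition), setting $B=B^k(0;\rho)$ and $\Om_n=(F_{n-1}\circ\cdots\circ F_0)^{-1}(B)$, one gets $\ov{\Om_n}\Subset\Om_{n+1}$, each $\Om_n$ biholomorphic to the ball, $\om{F}=\bigcup_n\Om_n$, and every boundary point $p$ lying outside all $\ov{\Om_n}$ yet in the closure of $\bigcup_n\Om_n$. The key identity is that the $n$--th shell equals $(F_{n-1}\circ\cdots\circ F_0)^{-1}\bigl(F_n^{-1}(B)\sm\ov B\bigr)$, so up to the biholomorphism $(F_{n-1}\circ\cdots\circ F_0)^{-1}$ of $\mbb C^k$ (locally bi--Lipschitz, hence Hausdorff--dimension preserving) the local shape of $\pa\om{F}$ near $p$ is governed by the oscillation of the hypersurface $\pa F_n^{-1}(B)$. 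I would therefore take $F_n=S_n\circ\sigma_n$ with $\sigma_n\in\mathsf{Aut}_0(\mbb C^k)$ a shear $\sigma_n(z_1,\dots,z_k)=\bigl(z_1+\ep_n z_2^{d_n}Q_n(z_2),z_2,\dots,z_k\bigr)$, $\ep_n>0$, $d_n\to\infty$, $Q_n$ a polynomial with positive coefficients, in the spirit of Theorem~\textbf{\ref{poly_short}} and its box--dimension corollary in Section~\textbf{4}. Since the $S_n$ are uniformly bounded, hence (Cauchy estimates) uniformly Lipschitz with constant $L$ on $B^k(0;r/2)$, one has $\|F_n-S_n\|\le L\|\sigma_n-\mathrm{id}\|$ there, so $\ep_n$ can be made small enough to respect $\delta_n$ while leaving $d_n$ and the coefficients of $Q_n$ free; letting $d_n\to\infty$ forces $\pa F_n^{-1}(B)$ to fold over itself on $\gtrsim d_n$ separated scales, and since this happens for \emph{every} $n$, near every boundary point and at arbitrarily fine scales $\pa\om{F}$ contains bi--Lipschitz copies of self--affine sets of dimension $\ge 2k-\eta_n$ with $\eta_n\to0$.

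To avoid tracking every fold I would argue existentially. Fix a countable basis $\{W_j\}$ of $\mbb C^k$ and, for each $j$ and each rational $s<2k$, work inside the complete metric space $\mathcal X$ of admissible sequences $\{S_n\circ\sigma_n\}$ with $\|\sigma_n-\mathrm{id}\|\le\delta_n/(2L)$ on $B^k(0;r)$ (a Baire space, being built from a complete family of shears), and consider $\mathcal A_{j,s}=\{\seq{F}\in\mathcal X:\ \pa\om{F}\cap W_j=\emptyset\ \text{or}\ \mathcal H^s(\pa\om{F}\cap W_j)=\infty\}$. I would show each $\mathcal A_{j,s}$ contains a dense $G_\delta$: density is the folding construction of the previous paragraph carried out at the scale of $W_j$ using only finitely many of the $\sigma_n$ (together with a mass--distribution/Moran estimate giving the required infinite $\mathcal H^s$--mass), and the $G_\delta$ property follows from the persistence of a fixed bi--Lipschitz fractal piece under small perturbations of the finitely many maps that control it. Intersecting over all $j,s$ yields a residual set of sequences; any member has $\mathcal H^s(\pa\om{F}\cap W_j)=\infty$ for every $j$ with $W_j\cap\pa\om{F}\ne\emptyset$ and every $s<2k$, i.e.\ $\dim_H=2k$ at every point of $\pa\om{F}$, which is the claim.

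The hard part will be this last step: turning the combinatorial ``folding'' of the shells into a rigorous \emph{Hausdorff}-dimension lower bound that is uniform over all boundary points and all scales (box dimension alone, which is what the constructions of Section~\textbf{4} most readily give, would not suffice), while staying within the decaying budget $\seq{\delta}$. Concretely one must control the compounded distortion of $(F_{n-1}\circ\cdots\circ F_0)^{-1}$ on the shells and verify that the ``bad'' sequences --- those with $\mathcal H^s(\pa\om{F}\cap W_j)<\infty$ for some $j,s$ --- are of first category in $\mathcal X$; by comparison, the reduction via Theorem~\textbf{\ref{transcendence}} and the shear perturbation are routine.
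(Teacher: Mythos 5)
Your high-level reduction is sound and matches the paper's: produce one sequence $\seq{F}$ satisfying the perturbation bounds of Theorem~\textbf{\ref{transcendence}} whose basin has the desired boundary, then transport via the resulting biholomorphism $\om{F}\cong\om{S}$. After that point, however, your route diverges from the paper's and, as you yourself concede in the final paragraph, contains a genuine gap.

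The paper does not create fractal structure dynamically at all. It starts with a \emph{prefabricated} $2k$--dimensional set: a simply connected planar domain $D$ with $\dim_H\pa D=2$, so $K=D^k$ has $\dim_H\pa K=2k$, and affine copies $K(p;\ep)$ of $K$ can be placed at any $p$ and scale. The inductive construction then uses Theorem~\textbf{\ref{poly_con}} to build $F_n=\phi_1\circ\phi_3\circ\phi_2$ so that, simultaneously, (a) $\|F_n-S_n\|<\delta_n$ on $\bar B$, (b) more and more copies of $K(\cdot;\cdot)$ get pulled into a swelling ball $B^k(q_i;R_i)$ far from $\bar B$ and hence lie in $\mbb C^k\setminus\om{F}$, contributing $\partial K^i\subset\partial\om{F}$, and (c) a separate family of marker points $T^i$ is pushed \emph{into} $\bar B$ so that $\bigcup_i\partial K^i$ is dense in $\partial\om{F}$. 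Because each $\partial K^i$ already carries a piece of Hausdorff dimension $2k$ placed near every boundary point, the dimension claim follows immediately without any dynamical estimate. In short: the paper transplants a known fractal, it does not grow one.

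Your proposal instead tries to \emph{generate} the fractal by shear perturbations $\sigma_n$ that fold $\partial F_n^{-1}(B)$, and then to run a Baire category argument over the space of admissible perturbations. Two steps here are genuinely missing, not routine. First, the folding heuristic gives at best a box--dimension lower bound (this is exactly what Section~\textbf{4} of the paper delivers, and only for a single boundary layer); converting it into a Hausdorff--measure lower bound that is uniform over all boundary points and all scales requires a mass--distribution or Frostman estimate that is nowhere sketched and is not straightforward, because the compounded distortion of $(F_{n-1}\circ\cdots\circ F_0)^{-1}$ can destroy separation between the folds. Second, and independently, the Baire argument needs $\mathcal A_{j,s}$ to contain a dense $G_\delta$: density is plausible once the first gap is filled, but the $G_\delta$ claim is delicate because $\mathcal H^s$ is neither upper nor lower semicontinuous on compact sets in general; ``persistence of a bi--Lipschitz fractal piece under small perturbations'' is not an established fact in this setting, and the perturbations of the tail maps $F_m$, $m>n$, are not small relative to the geometry $F(n)$ has already created. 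The paper's construction avoids both issues entirely, which is why it is given as a constructive rather than a residual statement. If you want to pursue your route you would need, at minimum, a quantitative Moran--type lower bound for $\mathcal H^s$ of the folded shells with distortion control, and a genuine semicontinuity or open--dense argument replacing the $G_\delta$ assertion; without those, the proposal is a plan rather than a proof.
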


\medskip\no 
\no {\it Acknowledgements:} The author would like to thank Han Peters and Kaushal Verma for suggesting the problems. The author would also like to thank Luka Boc--Thaler for suggesting Proposition \textbf{\ref{luka}}.  
\section{ Examples of Short \texorpdfstring {$\mathbb{C}^k$}{}'s}
\no First we prove that there exist countably many disjoint \short{k}'s in $\mbb C^k.$ Recall that a sequence of points $\{p_j\}$ in $\mbb C^k$ is said to be {\it tame}, if there exists an automorphism $\phi \in {\sf Aut}(\mbb C^k)$ such that 
\[ \phi(p_j)=je_1\]
where $e_1=(1,0, \hdots,0).$
\begin{prop}\label{infinite}
Given a tame sequence of point in $\mbb C^k$, there exists a collection of disjoint \textit{Short} $\mbb C^k$'s centered at each point of the tame sequence.
\end{prop}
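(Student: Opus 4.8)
The plan is to reduce Proposition \ref{infinite} to the known existence of a single \short{k} together with the definition of tameness. First I would recall (from \cite{ShortCk} or the constructions above) that there exists at least one \short{k}, say $\Omega_0 \subset \mbb C^k$, arising as a (biholomorphic image of a) non--autonomous basin of attraction at the origin. Since being a \short{k} is preserved under biholomorphisms of $\mbb C^k$ — the three defining properties (i) exhaustion by images of the ball, (ii) vanishing Kobayashi metric, (iii) existence of a non--constant bounded--above plurisubharmonic function — are all invariant under $\mathsf{Aut}(\mbb C^k)$, and in particular under translations, I may assume $\Omega_0$ is centered at, and contains, the point $e_1 = (1,0,\dots,0)$; more precisely, after composing with a translation I arrange $je_1 \notin \Omega_0$ is false only for $j=1$... but that is not quite enough, so the real work is to separate the translated copies.

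The key step is to produce a \emph{shrinking} family of \short{k}'s around the tame points so that the copies are pairwise disjoint. Given the tame sequence $\{p_j\}$, by definition there is $\phi \in \mathsf{Aut}(\mbb C^k)$ with $\phi(p_j) = je_1$. It therefore suffices to construct disjoint \short{k}'s centered at $\{je_1 : j \in \mbb N\}$ and then pull them all back by $\phi^{-1}$; since $\phi^{-1}$ is an automorphism of $\mbb C^k$, disjointness and the \short{k} property are both preserved. To get the disjoint copies around $je_1$: take the single \short{k} $\Omega_0$ centered at the origin, and for each $j$ pick a radius $\epsilon_j > 0$ and set $\Omega_j := je_1 + \epsilon_j \cdot \Omega_0 = \{ je_1 + \epsilon_j z : z \in \Omega_0\}$. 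Each $\Omega_j$ is the image of $\Omega_0$ under the affine automorphism $z \mapsto je_1 + \epsilon_j z$, hence is again a \short{k} centered at $je_1$. Since $\Omega_0$ is bounded in at least one direction — indeed, as a non--autonomous basin with the uniform upper bound it is not all of $\mbb C^k$, but more usefully one can intersect with the construction to ensure $\Omega_0$ lies in a fixed ball $B^k(0;R)$, or simply observe that $\Omega_0$ proper implies we can pre--compose with an automorphism mapping it into, say, the polydisc — choosing $\epsilon_j$ decreasing fast enough (e.g. $\epsilon_j < \tfrac{1}{4R}$ so that $\mathrm{diam}(\Omega_j) < \tfrac12$) forces $\Omega_j \subset je_1 + B^k(0; \tfrac14)$, and these balls are pairwise disjoint. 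Hence $\{\Omega_j\}$ is a disjoint family of \short{k}'s, and $\{\phi^{-1}(\Omega_j)\}$ is the desired disjoint family centered at $\{p_j\}$.

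The main obstacle is the boundedness issue: a \short{k} need not a priori be bounded, so the naive ``scale down and translate'' argument needs a genuine \short{k} that is contained in a bounded region. I expect this to be handled by invoking the Rosay--Rudin/Wold-type fact already cited in Section \textbf{2} — that Fatou--Bieberbach domains (and hence, by the methods of \cite{ShortCk}, \short{k}'s) can be arranged to avoid prescribed points or to sit inside prescribed open sets — or, more elementarily, by noting that since a \short{k} is a proper subdomain of $\mbb C^k$ there is a point $q \notin \overline{\Omega_0}$, and applying an automorphism of $\mbb C^k$ (a Fatou--Bieberbach type map, or just a rational shear) that carries a neighborhood of $q$ to a neighborhood of infinity, thereby squeezing $\Omega_0$ into a bounded set while preserving the \short{k} property. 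Once a bounded \short{k} is in hand, the remaining steps are the routine affine scaling and the pullback by $\phi^{-1}$ described above. I would therefore organize the proof as: (1) fix a bounded \short{k} $\Omega_0$; (2) affine--scale and translate to get disjoint copies $\Omega_j$ at $je_1$; (3) pull back by $\phi^{-1}$ from the tameness hypothesis; (4) note invariance of the three defining properties under automorphisms at each stage.
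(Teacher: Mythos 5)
Your proposal's central step---finding a \emph{bounded} \short{k} (or confining a \short{k} to a ball/polydisc by an automorphism) and then shrinking affine copies so that they sit in disjoint small balls around the points $je_1$---is impossible, and this is a genuine gap. Recall that property (ii) of a \short{k} is that the infinitesimal Kobayashi metric vanishes identically. Any bounded domain in $\mbb C^k$ is Kobayashi hyperbolic (its infinitesimal metric is positive on nonzero vectors by comparison with a ball containing it), so no bounded set can be a \short{k}, and no automorphism of $\mbb C^k$ can carry one into a bounded set. The same obstruction kills the weaker hope of confining $\Omega_0$ into a half-space or strip: if $\Omega_0$ projected into a hyperbolic one-dimensional region, the Kobayashi metric of $\Omega_0$ would be positive on tangent vectors with nonzero projection. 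So the ``scale down and translate'' plan cannot get off the ground; the pieces you would need to intersect/shrink are forced to be unbounded, and the scaled translates $je_1+\epsilon_j\Omega_0$ will in general overlap however small $\epsilon_j$ is. The auxiliary maps you float (a ``Fatou--Bieberbach type map'' or a ``rational shear'') also do not help as stated: an FB map is not an automorphism of $\mbb C^k$, and while precomposing with an FB biholomorphism $\psi:\mbb C^k\to D$ does yield a new \short{k} $\psi(\Omega_0)\subset D$, the FB domain $D$ is itself unbounded, so you have not gained boundedness; and a rational shear with poles is not an automorphism of $\mbb C^k$ at all.

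The part of your plan that does align with the paper is the reduction via tameness: push the tame sequence to $\{2\pi i m e_1\}$ (the paper uses $2\pi i m e_1$ rather than $me_1$, for reasons that become clear) and pull the resulting configuration back by $\phi^{-1}$. The paper then achieves disjointness not by bounding but by using the Rosay--Rudin automorphism $F(z_1,z_2)=(z_1+z_2,\tfrac12(1-z_2-e^{z_1+z_2}))$, extended to $\mbb C^k$ by attaching contracting factors, which has an attracting fixed point at each $2\pi i m e_1$. The corresponding attracting basins $\Omega_m$ are pairwise disjoint Fatou--Bieberbach domains. Choosing biholomorphisms $\psi_m:\Omega_m\to\mbb C^k$ with $\psi_m(2\pi i m e_1)=0$, one takes a single \short{k} $\omega$ at the origin and pulls it back: $\psi_m^{-1}(\omega)\subset\Omega_m$ is again a \short{k} (the three defining properties are invariant under biholomorphisms, and properness in $\mbb C^k$ persists since $\psi_m^{-1}(\omega)\subset\Omega_m\subsetneq\mbb C^k$). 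Disjointness is automatic because the $\Omega_m$ are disjoint. Finally, applying $f_1^{-1}$ transports these to the original tame sequence. So to repair your argument you should replace ``make $\Omega_0$ bounded and scale'' with ``land $\Omega_0$ inside pairwise disjoint Fatou--Bieberbach basins of a single automorphism with infinitely many attracting fixed points.''
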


\begin{proof}
As noted in Rosay--Rudin \cite{RR1}, the automorphism $F$ of $\mbb C^2$ given by
\[F(z_1,z_2)=\big(z_1+z_2,\frac{1}{2}(1-z_2-e^{z_1+z_2})\big)\]  has an attracting fixed point at each $p_m=(2m\pi i,0)$ for every $m \ge 0$. Now, given a tame sequence, say $\{a_m\}$ in $\mbb C^k$, $k \ge 2$, there exists an automorphism $f_1$ of $\mbb C^k$ such that 
\[ f_1(a_m)=2 \pi i m e_1\] where $e_1=(1,0,\hdots,0).$ Let
\[ f_2(z_1,z_2,\hdots,z_k)=\big(F(z_1,z_2),az_3,\hdots,az_k\big)\] for $0<|a|<1.$ This is an automorphism of $\mbb C^k.$ Clearly, $2 \pi i m e_1$ is an attracting fixed point $f_2$ for each $m \ge 0$ and the corresponding attracting basin of $f_2$ for each $2 \pi i m e_1$ (say $\Omega_m$) is a Fatou--Bieberbach domain, i.e., there exist biholomorphisms $\psi_m: \Omega_m \to \mbb C^k$ for every $m \ge 0.$ Also, without loss of generality one can assume that $\psi_m(2 \pi i m e_1)=0.$ 

\medskip\no 
Now from \cite{ShortC2}, there exists a \textit{Short} $\mbb C^k$, say $\omega$ obtained as a non--autonomous of basin of attraction at the origin. Let $\omega_m=\psi_m^{-1}(\omega).$ Thus, $\omega_m$ is a \textit{short} $\mbb C^k$. Let $\Psi_m=f_1^{-1}\circ \psi_m^{-1} $ and $\widetilde{\omega}_m=\Psi_m(\omega).$ Then $\widetilde{\omega}_m$ is the required disjoint collection of \textit{Short} $\mbb C^k$'s.
\end{proof}
\no Let ${\sf Aut}_0(\mbb C^k)$ denote the group of automorphisms of $\mbb C^k$ that fixes the origin and for a sequence $\{F_n\} \in {\sf Aut}_0(\mbb C^k)$ let 
\[ F(n)(z)=F_n \circ \cdots \circ F_1(z).\] 
\begin{prop}\label{Omega times Cl}
Let $\{F_n\} \in \mathsf{ Aut}_0(\mbb C^k)$, $k \ge 2$ be a sequence of automorphisms  such that the basin of attraction at $\om{F}$ is a \short{k}. Then for every $l \ge 1$, $\mbb C^l \times \om{F}$ is a \textit{Short} $\mbb C^{l+k}.$
\end{prop}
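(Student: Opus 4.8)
The plan is to verify, one at a time, the three defining conditions of a \short{l+k} for the set $\Om:=\mbb C^l\times\om{F}$. Observe first that $\Om$ is a domain, being a product of domains, and is a proper subset of $\mbb C^{l+k}$ since $\om{F}\subsetneq\mbb C^k$. Conditions (ii) and (iii) will be essentially formal; condition (i) is the point that requires an idea.

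For (ii) I would apply the product formula for the Kobayashi--Royden infinitesimal metric, $k_{X\times Y}\big((x,y);(\xi,\eta)\big)=\max\{k_X(x;\xi),k_Y(y;\eta)\}$. Since $k_{\mbb C^l}\equiv 0$ (every tangent vector at every point lies on an affine complex line, along which the metric vanishes) and $k_{\om{F}}\equiv 0$ by hypothesis, it follows that $k_\Om\equiv 0$. For (iii), let $\varphi$ be a non-constant plurisubharmonic function on $\om{F}$ that is bounded above; then $\varphi\circ\pi$, where $\pi:\mbb C^l\times\om{F}\to\om{F}$ is the holomorphic coordinate projection, is plurisubharmonic, non-constant, and bounded above on $\Om$.

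The work is in (i): expressing $\Om$ as a countable union of subdomains each biholomorphic to $B^{l+k}(0;1)$. We are given $\om{F}=\bigcup_n W_n$ with each $W_n$ biholomorphic to $B^k(0;1)$, but the obvious candidate $\bigcup_n B^l(0;n)\times W_n$ fails, because a product of two balls is not biholomorphic to a ball. The remedy is to pass to the unbounded Siegel model: $B^N(0;1)$ is biholomorphic, via the Cayley transform, to $H^N:=\{w\in\mbb C^N:\operatorname{Im}w_N>|w_1|^2+\cdots+|w_{N-1}|^2\}$. For each $t>0$ the linear automorphism $L_t$ of $\mbb C^{l+k}$ that scales the first $l$ coordinates by $\sqrt t$ and fixes the remaining $k$ carries $H^{l+k}$ onto $\{(u,z)\in\mbb C^l\times\mbb C^k:\operatorname{Im}z_k>\tfrac1t(|u_1|^2+\cdots+|u_l|^2)+|z_1|^2+\cdots+|z_{k-1}|^2\}$; as $t$ increases these domains increase, and their union is $\mbb C^l\times H^k$, where $H^k\cong B^k(0;1)$. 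Hence $\mbb C^l\times B^k(0;1)\cong\mbb C^l\times H^k=\bigcup_{t\in\mbb N}L_t(H^{l+k})$ is an increasing union of domains biholomorphic to $B^{l+k}(0;1)$. Pulling this exhaustion back through $\operatorname{id}_{\mbb C^l}\times\psi_n:\mbb C^l\times W_n\to\mbb C^l\times H^k$ (with $\psi_n:W_n\to H^k$ a biholomorphism) exhibits each $\mbb C^l\times W_n$ as a countable union of subdomains biholomorphic to $B^{l+k}(0;1)$; taking the union over $n$ does the same for $\Om=\bigcup_n\mbb C^l\times W_n$, which is (i).

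I expect the passage to the Siegel realization of the ball to be the only genuine obstacle; everything else is bookkeeping. (One could instead observe that $\Om=\om{G}$ is itself a non-autonomous basin of attraction, taking $G_n(u,z)=(\tfrac12 u,F_n(z))\in\mathsf{Aut}_0(\mbb C^{l+k})$ so that $G_n\circ\cdots\circ G_1(u,z)=(2^{-n}u,F_n\circ\cdots\circ F_1(z))$; but deducing (i) from this would require knowing these iterates converge locally uniformly on $\Om$, whereas the argument above uses only the stated definition of a \short{k}.)
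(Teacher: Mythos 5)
Your treatment of conditions (ii) and (iii) is correct and clean: the product formula $k_{X\times Y}\big((x,y);(\xi,\eta)\big)=\max\{k_X(x;\xi),k_Y(y;\eta)\}$ for the Kobayashi--Royden metric immediately gives $k_\Omega\equiv0$, and pulling back $\varphi$ along the projection handles the plurisubharmonic function; these are essentially the same moves the paper makes, though the paper builds the discs by hand rather than quoting the product formula. The Siegel-domain observation for (i) is a genuinely nice idea: realizing $H^{l+k}$ and its dilates $L_t(H^{l+k})$ as an increasing family biholomorphic to $B^{l+k}(0;1)$ with union $\mbb C^l\times H^k$ does show, correctly, that $\mbb C^l\times B^k(0;1)$ is an increasing union of domains biholomorphic to a ball.

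The gap is in the very last sentence of your argument for (i), ``taking the union over $n$ does the same for $\Omega$.'' The defining property of a \short{l+k} is not merely a countable union of domains biholomorphic to the ball (any domain is that, by covering with small Euclidean balls); it is an \emph{increasing} union, and this is what the paper's own statement of the property (and its use downstream) requires. Your construction produces, for each fixed $n$, an exhaustion $V_n^t=(\mathrm{id}\times\psi_n)^{-1}\big(L_t(H^{l+k})\big)$ of $\mbb C^l\times W_n$, but the sets $V_n^t$ and $V_{n+1}^{t'}$ are pulled back through \emph{different} biholomorphisms $\psi_n$, $\psi_{n+1}$ and need not be nested for any choice of $t,t'$. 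Concretely, asking $V_n^t\subset V_{n+1}^{t'}$ amounts to a uniform bound on $\big(1-|\psi_n(w)|^2\big)/\big(1-|\psi_{n+1}(w)|^2\big)$ over $W_n$, and nothing in the bare definition of a \short{k} forces $\overline{W_n}\subset W_{n+1}$ or otherwise controls this ratio near $\partial W_n$. So the collection $\{V_n^t\}_{n,t}$ covers $\Omega$ by domains biholomorphic to the ball, but you have not produced an increasing sequence.

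Your parenthetical alternative is in fact the paper's proof, and your reason for setting it aside does not apply here: the hypothesis of the proposition is precisely that $\om{F}$ is a non-autonomous basin of a sequence $\{F_n\}\subset\mathsf{Aut}_0(\mbb C^k)$, so the basin structure is available. The paper sets $\tilde F_n(z_1,\dots,z_{k+l})=(\alpha z_1,\dots,\alpha z_l,F_n(z_{l+1},\dots,z_{k+l}))$ with $0<|\alpha|<1$, takes $U_j=\tilde F(j)^{-1}\big(B^{l+k}(0;c)\big)$, and shows (using the sandwiching $U_{n_0+j}\subset B^l(0;\alpha^{-(n_0+j)}c)\times\Omega_j\subset U_{n_0+j+l_0}$ and the contraction $\tilde F_{n+1}(B^{l+k}(0;c))\subset B^{l+k}(0;c)$ for large $n$) that the $U_j$ are eventually nested and exhaust $\mbb C^l\times\om{F}$. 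That single global sequence sidesteps the cross-$n$ nesting problem entirely, which is exactly what your Siegel-domain route is missing.
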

\begin{proof}
Since $\om{F}$ is a \textit{Short} $\mbb C^k$ it satisfies the following properties:
\begin{enumerate}
\item[(i)] $\om{F}$ is a non--empty open connected set of $\mbb C^k$.

\medskip
	\item[(ii)] $\om{F}=\cup_{j=1}^{\infty}\Omega_j$, $\Omega_j \subset \Omega_{j+1}$, and each $\Omega_j$ is biholomorphic to the unit ball $B^k(0;1)$ in $\mbb C^k.$ Further, for a given $0< c< 1$ there exists $n_0 \ge 1$ such that
	\[ \Omega_j= F(n_0+j)^{-1}(B^k(0;c))\] 
	\item[(iii)] The infinitesimal Kobayashi metric on $\om{F}$ vanishes identically.
	
	\medskip
	\item[(iv)] There exists a non--constant plurisubharmonic function $\phi : \om{F} \to [-\infty , \infty)$ such that
	\[\om{F}=\{z \in \mbb C^k: \phi(z)<0\}.\]
	\end{enumerate}
Clearly $\mbb C^l \times \om{F}$ is an open connected set in $\mbb C^{l+k}.$
For each $n \ge 1$, let
$$\tilde{F}_n(z_1,z_2,\hdots,z_{k+l})=(\alpha z_1,\alpha z_2,\hdots,\alpha z_l, F_n(z_{l+1},\hdots,z_{k+l}))$$ where $0<|\alpha|<1$ and
\[ \widetilde{\Omega}_j=\tilde{F}(n_0+j)^{-1}\big(B^l(0;c) \times B^k(0;c)\big)=B^l(0;\alpha^{-(n_0+j)}c) \times \Omega_j.\]
Note that $\mbb C^l \times \om{F}$ is the basin of attraction of the sequence $\{\tilde{F}_n\}$ at the origin and $$\mbb C^l \times \om{F}= \bigcup_{j \ge 0} \widetilde{\Omega}_j.$$ Let $U_j=\tilde{F}(j)^{-1}\big(B^{l+k}(0;c)).$ Then clearly, $U_{n_0+j} \subset \widetilde{\Omega}_j.$ Since $\Om_j \subset \om{F}$, there exists $l_0 \ge 1$ such that for every $z \in \widetilde{\Omega}_j$
\[ \tilde{F}\big(n_0+j+l\big)(z) \in B^l(0; \alpha^{l_0} c) \times B^k(0;(c')^{l_0} c) \subset  B^{l+k}(0;c)\] where $0<c'<1.$
Hence, $\widetilde{\Omega}_j \subset U_{n_0+j+l_0}.$ Also for sufficiently large $n$, $U_n \subset U_{n+1}$ and thus $\mbb C^l \times \om{F}= \bigcup_{j \ge 0}U_{n_0+j+l_0}.$ 

\medskip\no 
Let $p \in \mbb C^l \times \om{F}$ and $\xi \in T_p(\mbb C^l \times \om{F}).$ Let $p'=(p_1,\hdots,p_l)$, $p''=((p_{l+1},\hdots,p_{l+k})$, $\xi'=(\xi_1,\hdots,\xi_l))$ and $\xi''=(\xi_{l+1},\hdots,\xi_{l+k}).$ Since $\tilde{F}_n$ is a linear map in the first $l-$variables and $\om{F}$ is a \textit{Short} $\mbb C^k$, there exist $F_1: \Delta(0;1) \to \mbb C^l$ such that $$F_1(0)=p'\text{ and }F_1'(0)=R \xi'$$ and $F_2: \Delta(0;1) \to \om{F}$ such that $$F_2(0)=p''\text{ and }F_2'(0)=R \xi''$$ for every $R>0.$ Let $F(z)=(F_1(z),F_2(z)).$ Thus $F(0)=p$ and $F'(0)=R \xi.$ But this is true for any $R>0$, and hence the infinitesimal Kobayashi metric vanishes on $\mbb C^l \times \om{F}.$

\medskip\no 
Let $\tilde{\phi}(z_1,\hdots,z_{k+l})=\phi(z_{l+1},\hdots,z_{k+l}).$ Since $\tilde{\phi}$ is independent of the first $l-$variables, $\tilde{\phi}$ is plurisubharmonic on $\mbb C^l \times \om{F}$ and
$$\mbb C^l \times \om{F}=\{z \in \mbb C^{l+k}: \tilde {\phi}(z)<0\}.$$ Thus $\mbb C^l \times \om{F}$ is a non--autonomous of basin of attraction and is a \textit{Short} $\mbb C^{l+k}.$
\end{proof}
\begin{cor}\label{Fatou-short Lemma}
Let $\Omega_1 \subset \mbb C^l,$ $ l \ge 2$ be a Fatou--Bieberbach domain and $\Omega_2 \subset \mbb C^k$, $k \ge 2$  be a \textit{Short} $\mbb C^k$. Then $\Omega_1 \times \Omega_2$ is a \textit{Short} $\mbb C^{l+k}$.
\end{cor}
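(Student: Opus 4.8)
The plan is to deduce the corollary from Proposition \ref{Omega times Cl} via a product biholomorphism, using that each of the three defining properties of a \textit{Short} $\mathbb{C}^{l+k}$ is invariant under biholomorphism. Since $\Omega_2$ is a \textit{Short} $\mathbb{C}^k$, it is realized (as in \cite{ShortC2}) as the non--autonomous basin of attraction $\om{F}$ at the origin of some sequence $\{F_n\} \in \mathsf{Aut}_0(\mathbb{C}^k)$; hence Proposition \ref{Omega times Cl} applies and $\mathbb{C}^l \times \Omega_2$ is a \textit{Short} $\mathbb{C}^{l+k}$. In particular $\mathbb{C}^l \times \Omega_2 = \bigcup_{n} U_n$ with each $U_n$ biholomorphic to the unit ball in $\mathbb{C}^{l+k}$ and $U_n \subset U_{n+1}$, its infinitesimal Kobayashi metric vanishes identically, and it carries a non--constant plurisubharmonic function $\widetilde{\phi}$ bounded above by $0$, with $\mathbb{C}^l \times \Omega_2 = \{\widetilde{\phi} < 0\}$.

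Since $\Omega_1$ is a Fatou--Bieberbach domain, fix a biholomorphism $\Psi : \mathbb{C}^l \to \Omega_1$ and put $\Phi := \Psi \times \mathrm{id}_{\Omega_2}$, a biholomorphism of $\mathbb{C}^l \times \Omega_2$ onto $\Omega_1 \times \Omega_2$. It then suffices to check that each defining property transports under $\Phi$. For (i): $\Omega_1 \times \Omega_2 = \bigcup_n \Phi(U_n)$, each $\Phi(U_n) \cong U_n$ is biholomorphic to the unit ball, $\Phi(U_n) \subset \Phi(U_{n+1})$, and $\Omega_1 \times \Omega_2$ is a \emph{proper} subdomain of $\mathbb{C}^{l+k}$ because $\Omega_1 \subsetneq \mathbb{C}^l$ (a Fatou--Bieberbach domain is by definition not all of $\mathbb{C}^l$). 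For (ii): the infinitesimal Kobayashi metric is a biholomorphic invariant, so it vanishes identically on $\Omega_1 \times \Omega_2$ since it does on $\mathbb{C}^l \times \Omega_2$. For (iii): $\widetilde{\phi} \circ \Phi^{-1}$ is plurisubharmonic, non--constant, and bounded above on $\Omega_1 \times \Omega_2$. Therefore $\Omega_1 \times \Omega_2$ is a \textit{Short} $\mathbb{C}^{l+k}$.

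I expect no genuine obstacle here: the corollary is essentially a repackaging of Proposition \ref{Omega times Cl}. The only point deserving a word of care is the first step — presenting the given \textit{Short} $\mathbb{C}^k$ as a basin $\om{F}$ so that Proposition \ref{Omega times Cl} applies verbatim — which is how all the examples of \cite{ShortC2}, \cite{ShortCk} and of the present paper arise. One could also bypass Proposition \ref{Omega times Cl} and verify (i)--(iii) for $\Omega_1 \times \Omega_2$ directly, using only $\Omega_1 \cong \mathbb{C}^l$ and that $\Omega_2$ is a \textit{Short} $\mathbb{C}^k$, but this is the same argument in slightly different dress.
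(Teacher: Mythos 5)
Your proof is correct and follows essentially the same route as the paper: identify $\Omega_1$ with $\mathbb{C}^l$ via the Fatou--Bieberbach biholomorphism, take the product biholomorphism with the identity on $\Omega_2$, invoke Proposition \ref{Omega times Cl} to see that $\mathbb{C}^l \times \Omega_2$ is a \textit{Short} $\mathbb{C}^{l+k}$, and observe that the three defining properties are biholomorphic invariants. You spell out the invariance check and the need to view $\Omega_2$ as a basin $\om{F}$ (so that Proposition \ref{Omega times Cl} applies) more explicitly than the paper's terse two-line proof, but the underlying argument is the same.
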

\begin{proof}
Let $\phi: \Omega_1 \to \mbb C^l$ be a biholomorphism that identifies $\Omega_1$ with $\mbb C^l$. Then $\tilde{\phi}: \Om_1 \times \Om_2 \to \mbb C^l \times \Om_2$ as $\tilde{\phi}(z_1,z_2)=(\phi(z_1),z_2)$, for $z_1 \in \Om_1$ and $z_2 \in \Om_2$ is evidently a biholomorphism. Hence $\Omega_1 \times \Omega_2$ is a \short{l+k}.
\end{proof}
\begin{cor}
For each $k \ge 4$ there exists a \short{k} which is not Runge.
\end{cor}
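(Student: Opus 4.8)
The plan is to obtain the desired domain as a product. By Wold \cite{Runge} there is a Fatou--Bieberbach domain $\Omega_1 \subset \mbb C^2$ which is not Runge in $\mbb C^2$; and since $k \ge 4$ forces $k-2 \ge 2$, Forn\ae ss \cite{ShortC2} supplies a \short{k-2}, say $\Omega_2 \subset \mbb C^{k-2}$. By Corollary \ref{Fatou-short Lemma}, $\Omega_1 \times \Omega_2$ is then a \short{k}. So everything reduces to checking that $\Omega_1 \times \Omega_2$ is not Runge in $\mbb C^k$.

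Here I would be careful not to invoke the biholomorphism $\Omega_1 \times \Omega_2 \cong \mbb C^2 \times \Omega_2$ from the proof of Corollary \ref{Fatou-short Lemma}: being Runge is not a biholomorphic invariant of a domain but a property of its embedding in $\mbb C^k$, so the non--Runge property of $\Omega_1$ must be transferred by a direct argument. I would argue by slicing. Suppose $\Omega_1 \times \Omega_2$ were Runge, and let $f \in \mathcal O(\Omega_1)$ be arbitrary. The function $F(z,w) := f(z)$ lies in $\mathcal O(\Omega_1 \times \Omega_2)$, so there exist $P_m \in \mathcal O(\mbb C^k)$ (indeed polynomials) with $P_m \to F$ uniformly on compact subsets of $\Omega_1 \times \Omega_2$. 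Fixing $w_0 \in \Omega_2$ and a compact $K \subset \Omega_1$, the set $K \times \{w_0\}$ is a compact subset of $\Omega_1 \times \Omega_2$, so the functions $z \mapsto P_m(z,w_0) \in \mathcal O(\mbb C^2)$ converge to $f$ uniformly on $K$. Since $f$ and $K$ were arbitrary, $\Omega_1$ would be Runge in $\mbb C^2$, contradicting the choice of $\Omega_1$. Hence $\Omega_1 \times \Omega_2$ is not Runge, and it is the required \short{k}.

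The analytic input is essentially nil --- the slicing step is two lines --- so the real content is bookkeeping: that Wold's non--Runge Fatou--Bieberbach domain sits in dimension exactly $2$, which is precisely what makes $k \ge 4$ the natural range, and that the dimensions entering Corollary \ref{Fatou-short Lemma} are consistent. An alternative route to non--Runge--ness, if preferred, is hull--theoretic: pick a compact $K_1 \subset \Omega_1$ whose polynomial hull in $\mbb C^2$ is not contained in $\Omega_1$ (such $K_1$ exists because $\Omega_1$, being Stein, is Runge if and only if it is polynomially convex) and any compact $K_2 \subset \Omega_2$; then $\widehat{K_1 \times K_2} = \widehat{K_1} \times \widehat{K_2}$ is a compact subset of $\Omega_1 \times \Omega_2$ whose hull escapes the product, again witnessing that it is not Runge.
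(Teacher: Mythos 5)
Your proposal is correct and uses the same product decomposition as the paper: a non--Runge Fatou--Bieberbach domain $\Omega_1\subset\mbb C^2$ from Wold, a \short{k-2} $\Omega_2$, and Corollary \ref{Fatou-short Lemma} to see that $\Omega_1\times\Omega_2$ is a \short{k}. Where you differ is in showing that the product is not Runge. Your primary argument works directly from the definition: if polynomials were dense in $\mathcal O(\Omega_1\times\Omega_2)$ then restricting the approximating polynomials to a slice $\Omega_1\times\{w_0\}$ would make $\Omega_1$ Runge, a contradiction. The paper instead argues via polynomial hulls: it picks a compact $K\subset\Omega_1$ with $\widehat{K}\not\subset\Omega_1$, embeds it as $K_{w_0}=K\times\{w_0\}$, and shows by slicing polynomials that $\widehat{K}\times\{w_0\}\subset\widehat{K_{w_0}}$, so the hull of $K_{w_0}$ escapes $\Omega_1\times\Omega_2$. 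Your alternative hull--theoretic route is essentially the paper's proof, though you state the sharper product formula $\widehat{K_1\times K_2}=\widehat{K_1}\times\widehat{K_2}$ where the paper only needs the one containment with $K_2$ a point. Both arguments are valid; the definitional slicing one you lead with is marginally more self--contained, since the hull--theoretic version silently relies on the equivalence of Runge--ness with polynomial convexity, and your explicit remark that Runge--ness is an embedding property (not transferable along the biholomorphism $\Omega_1\times\Omega_2\cong\mbb C^2\times\Omega_2$) is a useful point the paper leaves implicit.
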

\begin{proof}
By Theorem \textbf{1} in \cite{Runge}, there exists a Fatou--Bieberbach domain (say $\Omega_1$) in $\mbb C^2$ which is not Runge. From Lemma \textbf{\ref{Fatou-short Lemma}}, if $\Omega_2$ is a \short{k-2}, then $\Om_1 \times \Om_2$ is \short{k}. 

\medskip\no \textit{Claim: }$\Om_1 \times \Om_2$ is not Runge.

\medskip\no Since $\Om_1$ is not Runge there exists a compact set 
$K$ such that the polynomial convex hull of $K$, $\widehat{K} \not \subset \Om_1.$ Now fix a $w_0 \in \Om_2$ and define the following sets:
\[ K_{w_0} =\{(z,w_0) \in \mbb C^{l+k}: z \in K\} \text{ and } \widehat{K}_{w_0}=\{(z,w_0) \in \mbb C^{l+k}: z \in \hat{K}\}.\]
As $\widehat{K}\not \subset \Om_1$, $\widehat{K}_{w_0} \not \subset \Om_1 \times \Om_2. $ Suppose $P$ be  a polynomial map from $\mbb C^{l+k}$ and $(z,w_0) \in \widehat{K}_{w_0}.$ Then $P_{w_0}(z)=P(z,w_0)$ is polynomial in $\mbb C^l$ and $|P_{w_0}(z)| \le \|P_{w_0}\|_K$, i.e., $|P(z,w_0)| \le \|P\|_{K_{w_0}}.$ Hence $\widehat{K}_{w_0} \subset \widehat{K_{w_0}} \not \subset \mbb C^{l+k}.$ Thus the proof.
\end{proof}

\no Here is an alternative proof of the existence of a non-Runge \short{k}, $k \ge 2$, that was suggested to us by Luka Boc--Thaler. Recall the following fact from \cite{Runge}.
\begin{thm}\label{Runge_pre}
There exists a subset $Y \subset \mbb C^* \times \mbb C$, such that $0 \in \widehat{Y}.$ Further, for any $p \in \mbb C^* \times \mbb C$ and $\ep>0$, there exists a biholomorphism of $\psi \in \mathsf{ Aut}(\mbb C^* \times \mbb C)$ such that $\psi(Y)=B^2(p;\ep).$
\end{thm}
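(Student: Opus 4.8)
The plan is to establish the two assertions in turn, using only the elementary automorphisms of $\mbb C^*\times\mbb C$ -- the affine maps $(z,w)\mapsto(\la z,aw+b)$, the shears $\sigma_f(z,w)=(z,w+f(z))$ with $f\in\mathcal O(\mbb C^*)$, and the overshears $\phi_g(z,w)=(z\,e^{g(w)},w)$ with $g$ entire -- assembled into genuine automorphisms by an Anders\'en--Lempert push--out. Here $\widehat{K}$ denotes the polynomial hull of $K$ in $\mbb C^2$.

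For the existence of $Y$ I would \emph{wind a thin ball around the puncture} $\{z_1=0\}$. Starting from a small ball $B$ centred near $(1,0)$, the overshear $\phi_g(z,w)=(z\,e^{Nw},w)$ with $N$ large deforms $B$ so that the first--coordinate projection of $\phi_g(B)$ becomes a thick annulus encircling the origin; then $0$ lies in the polynomial hull of that projection, while $\phi_g(B)$ itself stays bounded with a small $z_2$--coordinate. Decomposing a polynomial as $q(u,v)=q(u,0)+v\,r(u,v)$ and applying the one--variable maximum principle on the annulus gives $\sup_{\phi_g(B)}|q|\ge|q(0,0)|$ up to an error proportional to the $z_2$--width times a Cauchy bound for $\pa q/\pa v$. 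A single overshear cannot make this error small uniformly in $q$, so I would instead take $Y=\Phi^{-1}(B^2(p;\ep))$ for an automorphism $\Phi=\lim_k(\phi_k\circ\cdots\circ\phi_1)$ obtained by a push--out, where the $\phi_k$ are successive winding maps (interleaved with auxiliary shears $\sigma_f$ that re--centre and keep the $z_2$--coordinate small) chosen so that at stage $k$ one has $|q(0)|\le\sup_{\Phi_k(B)}|q|+2^{-k}$ for every polynomial of degree $\le k$; passing to the limit yields $0\in\widehat Y$. For the second assertion I would run the same construction with the prescribed ball $B^2(p;\ep)$ as target, normalising the free parameters (by pre-- and post--composition with affine automorphisms) so that the preimage produced is the same $Y$ for every $(p,\ep)$; then $\psi:=\Phi$ satisfies $\psi(Y)=B^2(p;\ep)$.

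The step I expect to be the real obstacle is the hull estimate. Since $\overline Y$ is a contractible compact subset of $\mbb C^*\times\mbb C$, every loop in it is null--homotopic in $\mbb C^*\times\mbb C$; hence the first coordinate of any analytic disc with boundary in $\overline Y$ has winding number zero about the origin and is therefore zero--free, so \emph{no} analytic disc through $0$ has boundary in $\overline Y$. Thus $0\in\widehat Y$ cannot be certified by analytic structure and must be extracted by propagating a quantitative control of the polynomial hull through the entire infinite composition -- keeping the winding fast enough, the $z_2$--width small enough, and the errors summable, all simultaneously. This bookkeeping is the delicate heart of the argument, and is what is carried out in \cite{Runge}.
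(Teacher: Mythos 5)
The paper itself offers no proof of this statement: it is recalled verbatim as a fact from Wold \cite{Runge} and then used as a black box in Proposition \ref{luka}, so there is no internal argument to compare against -- only Wold's original construction. Your sketch does capture the right geometric mechanism (spiral a thin ball-shaped set around the puncture $\{z_1=0\}$ and trap $0$ in the polynomial hull via a quantitative one-variable estimate propagated through the winding), and your observation that analytic discs cannot certify $0\in\widehat{Y}$ -- since $\ov{Y}$ is contractible, disc boundaries are null-homotopic in $\mbb C^*\times\mbb C$, so the first coordinate of any such disc is zero-free -- is accurate and is precisely why the estimate must be quantitative rather than structural.

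There is, however, a structural gap that would make the sketch fail as written. The theorem requires $\psi\in\mathsf{Aut}(\mbb C^*\times\mbb C)$, a genuine self-map, but a Dixon--Esterle--type push-out $\Phi=\lim_k\phi_k\circ\cdots\circ\phi_1$ generically produces a biholomorphism of $\mbb C^*\times\mbb C$ \emph{onto a proper subdomain} -- this is exactly how Fatou--Bieberbach domains arise -- and not an automorphism; $Y=\Phi^{-1}(B)$ would then not be carried onto $B$ by any $\psi\in\mathsf{Aut}(\mbb C^*\times\mbb C)$. To extract a true automorphism from an infinite composite you need the tail maps $\phi_k$ to approach the identity in a controlled way (a very different bookkeeping from a push-out), or you must achieve the winding with finitely many shears and overshears. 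Relatedly, the hull bound you derive from $q(u,v)=q(u,0)+v\,r(u,v)$ carries constants that depend on $q$ (through a derivative bound for $r$ and a length-to-distance ratio of the spiral), so the iterated inequality $|q(0)|\le\sup_{\Phi_k(B)}|q|+2^{-k}$, uniformly over all degree-$\le k$ polynomials, does not come for free and is the hard analysis you correctly defer to \cite{Runge}. Finally, the second assertion as literally stated -- one fixed $Y$ carried onto \emph{every} ball -- would require $\mathsf{Aut}(\mbb C^*\times\mbb C)$ to act transitively on round balls, which is doubtful (the affine automorphisms that do preserve balls, $(z,w)\mapsto(\la z,\mu w+b)$ with $|\la|=|\mu|$, only realize balls whose radius equals the modulus of the first coordinate of the centre); the application in Proposition \ref{luka} needs only the weaker per-ball statement that for each ball $B$ there is some $\psi$ with $0\in\widehat{\psi^{-1}(B)}$, and your normalisation should be read, and defended, only in that form.
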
 
\begin{prop}\label{luka}
 For every $k \ge 2$, there exists a \short{k} which is not Runge.
\end{prop}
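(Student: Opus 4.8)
The plan: for $k=2$ I will build a \short{2} $\Omega$ with $Y\subset\Omega\subset\mbb C^*\times\mbb C$, where $Y$ is the set of Theorem~\ref{Runge_pre}; for $k\ge3$ I then cross with $\mbb C^{k-2}$. In both cases the domain omits the origin, which lies in $\widehat Y$, so it cannot be Runge.

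Assume for the moment that \emph{some} \short{2}, call it $\omega$, lies inside $\mbb C^*\times\mbb C$. Choose $\ep>0$ so small that a ball $B^2(p;\ep)\subset\omega$; then $p\in\mbb C^*\times\mbb C$ and $B^2(p;\ep)\subset\mbb C^*\times\mbb C$, so Theorem~\ref{Runge_pre} furnishes $\psi\in\mathsf{Aut}(\mbb C^*\times\mbb C)$ with $\psi(Y)=B^2(p;\ep)$, i.e.\ $\psi^{-1}(B^2(p;\ep))=Y$. Put $\Omega:=\psi^{-1}(\omega)$. The restriction of $\psi^{-1}$ to $\omega$ is a biholomorphism onto $\Omega$, and each of the three defining properties of a \SHORT{k} is invariant under biholomorphisms of domains: (i) a biholomorphism carries balls to balls; (ii) the Kobayashi metric is a biholomorphic invariant; (iii) the pull--back of a non--constant, bounded--above plurisubharmonic function under a biholomorphism is again such. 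Hence $\Omega$ is a \short{2}. Since $\Omega\subset\mbb C^*\times\mbb C$ we get $0\notin\Omega$, whereas $Y=\psi^{-1}(B^2(p;\ep))\subset\psi^{-1}(\omega)=\Omega$. So $\Omega$ is a \short{2} containing $Y$ but missing the point $0\in\widehat Y$, and is therefore not Runge.

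Everything for $k=2$ thus reduces to exhibiting a \short{2} inside $\mbb C^*\times\mbb C$, and I expect this to be the genuine obstacle. I would get it by first producing a Fatou--Bieberbach domain $D\subset\mbb C^*\times\mbb C$ — a proper subdomain biholomorphic to $\mbb C^2$ — via a push--out construction performed inside $\mbb C^*\times\mbb C$, whose automorphism group is large by Theorem~\ref{Runge_pre}, along the lines of \cite{DE},\cite{Gl2},\cite{St}; such a $D$ is in any case implicit in \cite{Runge}. Then, if $\Theta:\mbb C^2\to D$ is a biholomorphism and $\omega_0\subset\mbb C^2$ is a \short{2} of the type constructed in \cite{ShortC2} (a non--autonomous basin of a sequence as in (\ref{short k})), the image $\omega:=\Theta(\omega_0)\subset D\subset\mbb C^*\times\mbb C$ is, by the invariance just used, again a \short{2}, as required. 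A cleaner variant that bypasses $\mbb C^*\times\mbb C$ entirely: by \cite{Runge} there is a non--Runge Fatou--Bieberbach domain $D\subset\mbb C^2$; fix a compact $K\subset D$ with $\widehat K\not\subset D$ and a biholomorphism $\Theta:\mbb C^2\to D$. Since $\omega_0$ contains a ball about the origin, for $\lambda>0$ large the dilate $\lambda\omega_0$ is a \short{2} containing $\Theta^{-1}(K)$, so $\Theta(\lambda\omega_0)\subset D$ is a \short{2} containing $K$, hence not Runge.

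Finally, for $k\ge3$ I pass to a product. Let $\Omega$ be the non--Runge \short{2} produced above; it is biholomorphic to a non--autonomous basin $\om{F}$ of some $\seq{F}$ as in (\ref{short k}), so by Proposition~\ref{Omega times Cl} the domain $\mbb C^{k-2}\times\om{F}$ is a \SHORT{k}, and therefore so is its biholomorphic image $\mbb C^{k-2}\times\Omega$. This lies in $\mbb C^{k-2}\times(\mbb C^*\times\mbb C)$; the compact set $K:=\{0\}\times Y$ (with $0\in\mbb C^{k-2}$) satisfies $\widehat K=\{0\}\times\widehat Y\ni 0$, while $0\notin\mbb C^{k-2}\times\Omega$, so $\mbb C^{k-2}\times\Omega$ is not Runge. (In the variant of the previous paragraph one uses $\{0\}\times K$ in place of $\{0\}\times Y$.)
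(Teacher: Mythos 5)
Your proof is correct and, for $k=2$, it is essentially the paper's own argument: construct a \short{2} $\omega$ inside a Fatou--Bieberbach domain $D\subset\mbb C^*\times\mbb C$ from \cite{Runge}, then use the automorphism $\psi$ of $\mbb C^*\times\mbb C$ from Theorem~\ref{Runge_pre} to pull a small ball $B^2(p;\ep)\subset\omega$ back to $Y$, so that $\psi^{-1}(\omega)$ contains $Y$ but omits $0\in\widehat Y$. You spell out the biholomorphic invariance of the three defining properties more explicitly than the paper does, which is a genuine clarification of a step the paper leaves implicit.

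Where you differ is in the passage to $k\ge3$. The paper runs the same argument directly in $\mbb C^*\times\mbb C^{k-1}$ for every $k\ge2$, which implicitly relies on a $\mbb C^*\times\mbb C^{k-1}$ analogue of Theorem~\ref{Runge_pre} even though the theorem is only stated for $\mbb C^*\times\mbb C$. You instead settle $k=2$ first and then cross with $\mbb C^{k-2}$ via Proposition~\ref{Omega times Cl}, checking that $\{0\}\times Y$ still has $0$ in its hull while $0\notin\mbb C^{k-2}\times\Omega$. This has the modest advantage of only invoking Theorem~\ref{Runge_pre} exactly as stated, at the cost of the extra product step. Your ``cleaner variant'' --- dilating a basin $\omega_0$ so that its image in a non-Runge Fatou--Bieberbach domain $D\subset\mbb C^2$ captures a compact $K$ with $\widehat K\not\subset D$ --- is a genuinely different and arguably simpler route to the $k=2$ case, closer in spirit to the paper's $k\ge4$ corollary but applicable already when $k=2$; the dilation trick replaces the need for the special set $Y$.

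One small point worth making explicit in either version: $Y$, being a biholomorphic image of an open ball, is open, so the Runge criterion should be applied to a compact subset such as $\psi^{-1}\bigl(\overline{B^2(p;\ep')}\bigr)$ for $\ep'<\ep$ chosen so that $\overline{B^2(p;\ep')}\subset\omega$; this is glossed over in both your write-up and the paper's but is easy to repair.
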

\begin{proof}
From \cite{Runge} there exists a Fatou--Bieberbach domain, $D$ which is contained in $\mbb C^* \times \mbb C^{k-1}.$ Let $\phi: D \to \mbb C^k$, be the biholomorphism, then $\omega=\phi(\Omega)$ is a \short{k} where $\Omega$ is a \short{k}. Now from Theorem \textbf{\ref{Runge_pre}}, there exist an automorphism, $\psi$ of $\mbb C^* \times \mbb C^{k-1}$ such that for $B^k(p;\ep) \subset \omega$, $\psi(B^k(p;\ep))=Y.$ Hence $\psi(\omega)$ is a non--Runge \short{k}.
\end{proof}
\section{Proof of Theorems \ref{poly_short} and \ref{transcendence}}
\begin{proof}[Proof of Theorem \ref{poly_short}]
Note that there exists $M>0$ such that 
\[ |z_1^2P(z_1)|< M|z_1|^2 \text{ for every } z_1 \in D(0;r)\] whenever $0<r<1.$ Let $0<c<1$ such that $(1-Mc)>0.$ Further, choose $0<c'<c$ and let $c_l=c(c')^l.$

\medskip\no 
\textit{Claim:} For every $n \ge n_0$, sufficiently large and $l \ge 0$ then $F_{n+l}(\Delta^2(0;c_l)) \subset \Delta^2(0;c_{l+1}).$

\medskip\no 
Since $\lim_{n \to \infty} a_n^{-2^n}=0$ there exists $0< a<1$ such that for $n$ sufficiently large and every $l \ge 0$
\begin{align*}
\log a_{n+l} &= 2^{n+l} \log a_{n+l}^{-2^{n+l}} \\
& \le (l+1)2^n \log a.  
\end{align*}
Thus, for some $n \ge n_0$ and every $l \ge 0$
\begin{align*}
\log a_{n+l}&< \log c(1-Mc)+(l+1) \log c' \\
a_{n+l} &< c(1-Mc)c'^{l+1} < c(c')^{l+1}-M(c(c')^l)^2 <c_{l+1}-Mc_l^2\\
\text{ i.e., } a_{n+l}c_l+Mc_l^2 &< c_{l+1}.
\end{align*}
Hence the claim.

\medskip\no 
	Now define
	\[\Omega_n=\{z \in \mbb C^2: F(n)(z) \in \Delta^2(0;c)\}.\]
From the above claim, $\Omega_{n} \subset \Omega_{n+1}$ for sufficiently large $n$. Also for every $n \ge n_0$ and $l\geq 1$, $F_{n+l} \circ \cdots\circ F_{n+1}(z) \to 0$ uniformly on $\Omega_n$ thus $\cup_{n \ge n_0}\Omega_n \subset \Omega_{\{F_{n}\}}.$ Conversely, if $z \in \Omega_{\{F_n\}}$, then $\|F(n)(z)\|<c$ for sufficiently large $n$, i.e., $z \in \Omega_n$ for $n$ large. Hence $\cup_{n \ge n_0}\Omega_n = \Omega_{\{F_{n}\}}.$ 
\begin{lem}
$\Omega_{\{F_n\}}=\cup_{j \ge 0} U_j$ where $U_j \subset U_{j+1}$ and each $U_j$ is biholomorphic to the unit ball in $\mbb C^2$. Further, the infinitesimal Kobayashi metric vanishes identically on $\Omega_{\{F_n\}}.$
\end{lem}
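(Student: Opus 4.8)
The plan is to leverage the structure already established in the proof of Theorem~\ref{poly_short}: we have written $\Omega_{\{F_n\}}=\cup_{n\ge n_0}\Omega_n$ where $\Omega_n=F(n)^{-1}(\Delta^2(0;c))$ and, by the claim, $\Omega_n\subset\Omega_{n+1}$ for $n\ge n_0$. First I would observe that each $\Omega_n$ is biholomorphic to the bidisc $\Delta^2(0;c)$, hence to $B^2(0;1)$, since $F(n)=F_n\circ\cdots\circ F_1$ is a polynomial automorphism of $\mbb C^2$ whose restriction to $\Omega_n$ maps it biholomorphically onto $\Delta^2(0;c)$; setting $U_j=\Omega_{n_0+j}$ gives the desired exhaustion by balls. (If one prefers genuine balls to bidiscs, one can either cite that a bidisc is biholomorphic to a ball --- false --- so instead shrink: inside $\Delta^2(0;c)$ take a ball $B^2(0;c/2)$, pull back, and check the pulled-back sets still exhaust $\Omega_{\{F_n\}}$ and are increasing, which follows from the same $F_{n+l}(\Delta^2(0;c_l))\subset\Delta^2(0;c_{l+1})$ estimate applied with balls in place of bidiscs; alternatively invoke that the bidisc is a bounded convex domain and therefore biholomorphic --- again false. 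The cleanest route is: each $\Omega_n$ is biholomorphic to a bounded \emph{Runge}, \emph{pseudoconvex} domain that is moreover star-shaped, and being biholomorphic to the polydisc it is in particular a $2$-dimensional complex manifold biholomorphic to a bounded domain; since the definition of \short{k} in the literature only requires biholomorphism to the \emph{ball}, I would simply replace $\Delta^2(0;c)$ by $B^2(0;c)$ throughout the preceding claim --- the inequality $a_{n+l}c_l+Mc_l^2<c_{l+1}$ was derived from the norm estimate $|z_1^2P(z_1)|<M|z_1|^2$ and the coordinate bound $\|F_n(z)\|\le a_n\|z\|+M\|z\|^2$, which is insensitive to whether the sublevel set is a bidisc or a Euclidean ball.)

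Next I would establish the vanishing of the infinitesimal Kobayashi metric. The key point is that $\Omega_{\{F_n\}}$ contains $F(n)^{-1}$ of balls of radius $c$ for all large $n$, and $F(n)^{-1}$ expands: writing $\|F_n(z)\|\le a_n\|z\|+M\|z\|^2\le (a_n+Mc)\|z\|$ on the relevant region, the composition $F(n)$ contracts by a factor $\prod_{j}(a_j+Mc_j)$ which, because $\sum \log a_j$ diverges to $-\infty$ (indeed $a_n\to 0$ since $a_n<a_{n-1}^2$), tends to $0$; hence $F(n)^{-1}$ contains arbitrarily large balls around $0$. Given $p\in\Omega_{\{F_n\}}$ and a tangent vector $\xi$, for large $n$ we have $p\in\Omega_n$ and $F(n)$ maps a neighbourhood of $p$ biholomorphically to a neighbourhood of $F(n)(p)$ inside $\Delta^2(0;c)$; composing an affine disc in $\Delta^2(0;c)$ through $F(n)(p)$ with $(F(n))^{-1}$ produces, for each $R>0$, a holomorphic disc $\Delta(0;1)\to\Omega_{\{F_n\}}$ with $F(0)=p$, $F'(0)=R\xi$, once $n$ is chosen large enough that the derivative bound $\|(F(n))^{-1}\|$ near $F(n)(p)$ exceeds $R\|\xi\|/\mathrm{dist}(F(n)(p),\partial\Delta^2(0;c))$; this forces $k_{\Omega_{\{F_n\}}}(p;\xi)=0$. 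This is exactly the mechanism used by Forn\ae ss for \eqref{short k}, and the only thing to verify here is that the higher-order term $z_1^2P(z_1)$ does not spoil the expansion estimate, which the claim already handles.

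The main obstacle --- and the part deserving genuine care --- is controlling the derivative growth of $F(n)^{-1}$, i.e.\ making quantitative the statement that $F(n)^{-1}$ expands tangent vectors at $p$ without bound as $n\to\infty$, uniformly enough that the disc constructed lands inside $\Omega_{\{F_n\}}$ rather than merely inside $\mbb C^2$. Concretely, one needs a lower bound on the size of the ball inside $\Omega_{\{F_n\}}$ around $p$ on which $F(n)^{-1}$ is defined and holomorphic: since $F(n)(p)\in\Delta^2(0;c_{n-n_0})$ with $c_{n-n_0}\to 0$, the point $F(n)(p)$ may approach $\partial\Delta^2(0;c)$... no --- it stays deep inside, so in fact $F(n)(p)$ retreats toward $0$, which is favourable, and $F(n)^{-1}$ is defined on all of $\Delta^2(0;c)$ by construction; the genuine issue is instead bounding $\|D(F(n))^{-1}\|$ from below near $0$, equivalently $\|DF(n)(p)\|$ from above, which follows by differentiating the contraction estimate: $\|DF_j\|\le a_j + C\|z\|$ on $\Delta^2(0;c_{j-n_0})$ for a constant $C$ depending on $P$, so by the chain rule $\|DF(n)(p)\|\le\prod_{j}(a_j+Cc_{j-n_0})\to 0$. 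Once this product is shown to tend to $0$ --- which again reduces to the hypothesis $\lim a_n^{-2^n}=0$ together with $a_{n+1}<a_n^2$, exactly as in the claim --- the rest is the routine Forn\ae ss-type argument sketched above, and the lemma follows.
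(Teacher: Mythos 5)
Your proposal is correct and follows essentially the same route the paper intends: the paper's ``proof'' is simply a citation to (ii) and (iii) of Theorem~1.4 of Forn\ae ss \cite{ShortC2}, and what you sketch --- defining $U_j$ as pullbacks of Euclidean balls $B^2(0;c)$ under $F(n_0+j)$ (since the contraction estimate $\|F_n(z)\|\le a_n\|z\|+M\|z\|^2$ works just as well for balls as for bidiscs), and killing the Kobayashi metric by pulling back affine discs through $F(n)(p)$ using $\|DF(n)(p)\|\to 0$ --- is exactly that Forn\ae ss mechanism adapted via the already-proved claim. One cosmetic caution: the opening assertion that ``$\Omega_n$ is biholomorphic to the bidisc \ldots\ hence to $B^2(0;1)$'' is false and should be struck (you do correct yourself in the parenthetical, but the final write-up should start directly with the pulled-back balls rather than first stating and then retracting the bidisc-to-ball biholomorphism).
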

\begin{proof}
The proof is similar to the proof of (ii) and (iii) of Theorem 1.4 from \cite{ShortC2}.
\end{proof}
%
%

\medskip\no 
Let $F(n)(z)=(f_1^n(z),f_2^n(z))$. Define 
	\[ \phi_n(z)=\max\{|f_1^n(z)|, |f_2^n(z)|,a_n\}.\]
\begin{lem}
Let $$ \psi_n =\frac{1}{2^n}{\log \phi_n}.$$ Then $\psi_n \to \psi$ on $\Omega_{\{F_n\}}$ and $\psi$ is a plurisubharmonic function on $\Omega_{\{F_n\}}.$ 
\end{lem}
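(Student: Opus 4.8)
The plan is to check that each $\psi_n$ is plurisubharmonic, that after adding a positive correction tending to $0$ the sequence becomes eventually decreasing, and finally that the resulting limit is not identically $-\infty$.

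Plurisubharmonicity of $\psi_n$ is straightforward. Since $F(n)=F_n\circ\cdots\circ F_1$ is a polynomial automorphism of $\mbb C^2$, its components $f_1^n,f_2^n$ are entire, so $\log|f_1^n|$ and $\log|f_2^n|$ are plurisubharmonic on $\mbb C^2$; as $\log a_n$ is constant and a finite maximum of plurisubharmonic functions is plurisubharmonic, $\log\phi_n$---and hence $\psi_n=2^{-n}\log\phi_n$---is plurisubharmonic (the positive constant $a_n$ is kept inside the maximum precisely so that $\phi_n>0$ everywhere, avoiding $-\infty$ at common zeros of $f_1^n,f_2^n$). For the monotonicity, fix $n_1\ge n_0$ and $z\in\Omega_{n_1}$; by the Claim, $F(n)(z)\in\Delta^2(0;c)$ for every $n\ge n_1$, so writing $(w_1,w_2)=F(n)(z)$ and using $F_{n+1}(w)=\big(a_{n+1}w_2+w_1^2P(w_1),\,a_{n+1}w_1\big)$ together with $a_{n+1}<a_n^2$, $a_n\le\phi_n(z)<1$ and $|w_1^2P(w_1)|<M|w_1|^2$, one checks the elementary bound $\phi_{n+1}(z)\le(1+M)\,\phi_n(z)^2$. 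Taking logarithms and dividing by $2^{n+1}$ gives $\psi_{n+1}\le\psi_n+2^{-(n+1)}\log(1+M)$ on $\Omega_{n_1}$, so if $\epsilon_n:=2^{-n}\log(1+M)$ then $\epsilon_n\to0$ and the functions $g_n:=\psi_n+\epsilon_n$ are plurisubharmonic on $\mbb C^2$ with $g_{n+1}\le g_n$ on $\Omega_{n_1}$ for every $n\ge n_1$.

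Since every $z\in\om{F}=\bigcup_n\Omega_n$ lies in some $\Omega_{n_1}$, the sequence $\big(g_n(z)\big)$ is eventually decreasing, hence convergent in $[-\infty,\infty)$; because $\epsilon_n\to0$ the same is true for $\big(\psi_n(z)\big)$, and $\psi:=\lim_{n\to\infty}\psi_n=\lim_{n\to\infty}g_n$ exists pointwise on $\om{F}$. On each $\Omega_{n_1}$, which is connected, $\psi$ is a decreasing limit of plurisubharmonic functions and is therefore plurisubharmonic there or identically $-\infty$ there. Since the $\Omega_{n_1}$ are connected, eventually nested, and exhaust $\om{F}$, plurisubharmonicity of $\psi$ on all of $\om{F}$ follows once we exhibit a single point $z^{\ast}\in\om{F}$ with $\psi(z^{\ast})>-\infty$.

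This last point is the crux. As $\phi_n\ge|f_1^n|$, it is enough to keep $2^{-n}\log|f_1^n(z^{\ast})|$ bounded below. Because $F(n)\colon\Omega_n\to\Delta^2(0;c)$ is a biholomorphism, for $n_1$ large we may pick $z^{\ast}\in\Omega_{n_1}$ with $F(n_1)(z^{\ast})=(c/2,0)$, so that $f_1^{n_1}(z^{\ast})=c/2$ and $f_2^{n_1}(z^{\ast})=0$. Using the recursion $f_1^{n+1}=(f_1^n)^2P(f_1^n)+a_{n+1}f_2^n$, the hypothesis $P(0)=c_0>0$ (which forces $|P(w)|\ge c_0/2$ for $|w|$ small), the identity $f_2^n=a_nf_1^{n-1}$ with $|f_1^{n-1}(z^{\ast})|<c$ for $n>n_1$, and the fast decay $a_n\le a^{2^n}$ of the hypothesis (as extracted in the proof of the Claim, for a suitable $0<a<1$), one shows by induction that for all $n\ge n_1$ the quantity $|f_1^n(z^{\ast})|$ dominates both $a_n$ and $|f_2^n(z^{\ast})|$ and satisfies $|f_1^{n+1}(z^{\ast})|\ge\tfrac{c_0}{4}\,|f_1^n(z^{\ast})|^2$. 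Telescoping $\log|f_1^{n+1}(z^{\ast})|\ge 2\log|f_1^n(z^{\ast})|+\log(c_0/4)$ after dividing by $2^{n+1}$ shows that $2^{-n}\log|f_1^n(z^{\ast})|$ is bounded below uniformly in $n$, whence $\psi(z^{\ast})>-\infty$, and the proof is complete. The main obstacle is exactly this bootstrap: one has to keep the forward orbit of $z^{\ast}$ in the range where its first coordinate controls $\phi_n$, since at a generic point---for instance the origin, where $\phi_n\equiv a_n$---the orbit can decay faster than double--exponentially and $\psi$ really does take the value $-\infty$ there.
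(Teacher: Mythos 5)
Your argument is correct and its core is the same as the paper's: you establish $\phi_{n+1}\le(1+M)\phi_n^2$ along orbits trapped in $\Delta^2(0;c)$ and add the telescoping correction $2^{-n}\log(1+M)$ (the paper's $\Phi_n=\psi_n+\sum_{j\ge n}2^{-(j+1)}\log M$ is exactly this) to get a decreasing sequence of plurisubharmonic functions converging to $\psi$. The only real difference is your final bootstrap producing a point $z^{\ast}$ with $\psi(z^{\ast})>-\infty$: the paper does not address the possibility $\psi\equiv-\infty$ inside this lemma at all, but settles it in the subsequent lemma on non-constancy of $\psi$, where the induction $f_1^n(x,y)>c_0^{-1}(c_0x)^{2^{n+1}}$ for points with positive real coordinates in $\Delta^2(0;c)$ plays the role of your orbit of $F(n_1)^{-1}(c/2,0)$; both exploit the positivity of the coefficients of $P$ and of the $a_n$ in the same way. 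So your extra step is redundant relative to the paper's organization but makes the lemma self-contained under the convention that a plurisubharmonic function is not identically $-\infty$, and it could in fact be stated even more simply, since on your real positive orbit $P(f_1^n)\ge c_0$ holds outright, with no need for the $c_0/2$, $c_0/4$ allowances.
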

\begin{proof}
\medskip\no 
Since $z \in \Omega_{\{F_n\}}$, there exists $n\ge n_z$ such that $\phi_n(z) \le c.$ Since $a_{n+1}\le a_n^2$,
	\begin{align*}
	\phi_{n+1}(z) \le \max\{M\phi_n(z)^2+ a_{n+1}, a_{n+1}\} \le (M+1) \phi_n(z)^2.
	\end{align*}
		Thus  for every $z \in  \Omega_{\{F_n\}}$ 
	\[ \frac{1}{2^{n+1}}\log \phi^{n+1}(z) \le \frac{1}{2^{n+1}}\log M + \frac{1}{2^n}\log \phi_n(z).\] Now define 
	\[ \Phi_n(z)=\frac{1}{2^n}\log \phi_n(z)+\sum_{j \ge n}\frac{1}{2^{j+1}}\log M.\] Thus $\Phi_n$ is a monotonically decreasing sequence of plurisubharmonic functions and hence its limit, i.e., $\psi$ will be plurisubharmonic.
\end{proof}	
	\begin{lem}
	For every $z \in \Omega_{\{F_n\}}$, $\psi(z)<0$ i.e., $\psi$ is a bounded plurisubharmonic function on $\Omega_{\{F_n\}}.$ Further, $\Omega_{\{F_n\}}$ is not all of $\mbb C^2.$
	\end{lem}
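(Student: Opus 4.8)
The plan is to handle the two assertions separately. For $\psi<0$ I would \emph{not} pass to the limit in $\phi_n(z)\to 0$ — that only yields $\psi\le 0$ — but instead evaluate the decreasing auxiliary family $\Phi_n$ from the previous lemma at a single finite stage. For properness I would simply exhibit one orbit escaping to infinity, which is painless here because the coefficients of $P$ are nonnegative.

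\emph{The bound.} Recall that $\Phi_n(z)=2^{-n}\log\phi_n(z)+\sum_{j\ge n}2^{-(j+1)}\log M$ is nonincreasing in $n$ once $\phi_n(z)\le c$ and that $\psi=\lim_n\Phi_n$, so $\psi(z)\le\Phi_N(z)$ for any $N$ at which $\phi_N(z)\le c$. Given $z\in\Omega_{\{F_n\}}$, since $\Omega_{\{F_n\}}=\bigcup_{n\ge n_0}\Omega_n$ increasingly I pick $N\ge n_0$ with $z\in\Omega_N$, i.e. $F(N)(z)\in\Delta^2(0;c)$, so that $|f_1^N(z)|,|f_2^N(z)|<c$; since also $a_N<c$ for $N\ge n_0$ (from the Claim's proof) this gives $\phi_N(z)<c$. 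As $\sum_{j\ge N}2^{-(j+1)}=2^{-N}$,
\[
\psi(z)\le\Phi_N(z)=\frac{\log\phi_N(z)+\log M}{2^{N}}<\frac{\log(Mc)}{2^{N}}<0 ,
\]
the last step because $Mc<1$ by the standing hypothesis $1-Mc>0$. (Should the genuine constant in the recursion $\phi_{n+1}(z)\le(\cdot)\,\phi_n(z)^2$ be $M+1$ rather than $M$, one simply shrinks $c$ once at the very start so that $(M+1)c<1$; this affects nothing earlier.) Hence $\psi$ is bounded above by $0$, strictly, on $\Omega_{\{F_n\}}$.

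\emph{Properness.} Since $P$ has nonnegative coefficients, $P(x)\ge P(0)=c_0>0$ for every real $x\ge 0$. Take $p=(t,0)$ with $t>1/c_0$ a positive real. An induction gives $f_1^n(p),f_2^n(p)\ge 0$ for all $n$, and $f_1^{n+1}(p)=a_{n+1}f_2^n(p)+f_1^n(p)^2P\bigl(f_1^n(p)\bigr)\ge c_0 f_1^n(p)^2$, so $c_0 f_1^{n+1}(p)\ge\bigl(c_0 f_1^n(p)\bigr)^2$ and hence $c_0 f_1^n(p)\ge(c_0 t)^{2^{n}}\to\infty$. Thus $F(n)(p)\not\to 0$, so $p\notin\Omega_{\{F_n\}}$ and $\Omega_{\{F_n\}}\subsetneq\mathbb C^2$. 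Alternatively — and this is perhaps closer to the intended argument — if $\Omega_{\{F_n\}}=\mathbb C^2$ then $\psi$ would be plurisubharmonic and negative on all of $\mathbb C^2$, hence bounded above, hence constant; but $\phi_n(0)=a_n$ forces $\psi(0)=\lim_n 2^{-n}\log a_n=-\infty$ by the growth hypothesis on $\{a_n\}$, while the same positivity estimate at a small $(\varepsilon,0)\in\Omega_{\{F_n\}}$ gives $\psi\bigl((\varepsilon,0)\bigr)\ge\log(c_0\varepsilon)>-\infty$, so $\psi$ is nonconstant — in particular not identically $-\infty$ — a contradiction. This second route has the side benefit of recording that $\psi$ is nonconstant on $\Omega_{\{F_n\}}$, one of the defining properties of a \short{2}.

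\emph{Main obstacle.} The only delicate point is obtaining the \emph{strict} inequality $\psi(z)<0$ rather than just $\psi(z)\le 0$; the finite-stage evaluation $\psi\le\Phi_N$ is exactly what delivers it, the negativity of $\Phi_N(z)$ being precisely where $Mc<1$ is used. Everything else — the positivity induction, the geometric-series bookkeeping — is routine, and the hypothesis on $\{a_n\}$ enters only through facts already established ($\Omega_{\{F_n\}}=\bigcup\Omega_n$ increasingly, $a_N<c$ for $N\ge n_0$) and, in the alternative argument, through $\psi(0)=-\infty$.
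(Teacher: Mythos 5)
Your proof is correct and follows the approach the paper intends: the paper itself gives no argument here, only the line ``the proof is similar to Lemma 3.3 and Lemma 3.4 in \cite{ShortCk},'' and the two ingredients you supply---evaluating the monotone family $\Phi_n$ at a single finite stage $N$ to get a strictly negative upper bound, and exhibiting an escaping orbit on the positive real slice---are exactly what those cited lemmas do in the one--variable polynomial case. Two small remarks. First, you were right to flag the $M$ versus $M+1$ issue: the paper's preceding lemma derives $\phi_{n+1}\le (M+1)\phi_n^2$ and then writes $\log M$ in the definition of $\Phi_n$, which is a typo; your fix of shrinking $c$ at the outset so that $(M+1)c<1$ is the clean repair and leaves the Claim in the proof of Theorem \ref{poly_short} unaffected. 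Second, in the Liouville-style alternative, the stated lower bound $\psi\bigl((\varepsilon,0)\bigr)\ge\log(c_0\varepsilon)$ should read $\psi\bigl((\varepsilon,0)\bigr)\ge 2\log(c_0\varepsilon)$ (the exponent in $f_1^n>c_0^{-1}(c_0 x)^{2^{n+1}}$ is $2^{n+1}$, not $2^n$), and since the paper's positivity induction is phrased for $x>0,\,y>0$ you should either take $(\varepsilon,\varepsilon)$ or observe that $F_1(\varepsilon,0)$ already has both coordinates positive, after which the induction applies; neither change affects the conclusion $\psi\bigl((\varepsilon,0)\bigr)>-\infty$.
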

	\begin{proof}
The proof is similar to Lemma 3.3 and Lemma 3.4 in \cite{ShortCk}.
	\end{proof}
\begin{lem}
$\psi$ is non--constant on $\Omega_{\{F_n\}}.$
\end{lem}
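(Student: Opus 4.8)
The plan is to show $\psi$ is non--constant on $\om{F}$ by exhibiting two points where it takes different values, using the asymptotic behaviour of $F(n)$ along suitable orbits. First I would observe that since $\psi = \lim \psi_n$ and $\psi_n = \frac{1}{2^n}\log\phi_n$ with $\phi_n(z) = \max\{|f_1^n(z)|, |f_2^n(z)|, a_n\}$, the value $\psi(z)$ measures the exponential rate (in powers of $2$) at which the orbit $F(n)(z)$ decays to the origin. At the origin itself, $F(n)(0) = 0$, so $\phi_n(0) = a_n$, and hence $\psi(0) = \lim \frac{1}{2^n}\log a_n$. Because $a_{n+1} < a_n^2$, we have $\log a_{n+1} < 2\log a_n$, so the sequence $\frac{1}{2^n}\log a_n$ is decreasing; combined with the hypothesis $\lim a_n^{-2^n} = 0$, i.e. $\lim \frac{1}{2^n}\log a_n^{-2^n} = 0$ meaning $\log a_n \to -\infty$ faster than... actually one checks $\psi(0) = \lim \frac{1}{2^n}\log a_n$, and the hypothesis forces this limit to be $-\infty$. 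So $\psi(0) = -\infty$.

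Next I would produce a point $q \in \om{F}$ with $\psi(q) > -\infty$, which suffices since then $\psi(q) \neq \psi(0) = -\infty$. The natural candidate is a point whose orbit decays only at the ``generic'' rate dictated by the leading quadratic term $z_1^2 P(z_1) \approx c_0 z_1^2$ rather than by the much smaller $a_n$. Concretely, I would pick a point of the form $q = (w, 0)$ or more carefully a point lying (approximately) on the slow stable direction, and track $f_1^n(q)$. Writing $F_n(z_1, z_2) = (a_n z_2 + z_1^2 P(z_1), a_n z_1)$, for $q$ with small first coordinate the dominant contribution to the first coordinate after one step is $z_1^2 P(z_1)$, which is of size comparable to $|z_1|^2$; iterating, $|f_1^n(q)| \sim C^{2^n}$ for some constant $0 < C < 1$ (this is the standard ``super-attracting of order $2$'' estimate, exactly as for the pure $z_1^d$ model in \cite{ShortC2}), so $\frac{1}{2^n}\log|f_1^n(q)| \to \log C > -\infty$. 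The point is that the $a_n z_2$ perturbation is negligible: since $a_n$ decays double-exponentially while $|f_1^n(q)|$ and $|f_2^n(q)|$ decay only single-exponentially-in-$2^n$, the term $a_n z_2$ never dominates. Hence $\psi(q) = \log C > -\infty = \psi(0)$, so $\psi$ is non--constant.

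The main obstacle I expect is making the lower bound $|f_1^n(q)| \gtrsim C^{2^n}$ rigorous in the presence of both the higher--order terms in $P$ and the cross term $a_n z_2$: I need to choose $q$ (and a small enough neighbourhood / a small enough starting first coordinate) so that the orbit stays in the region where $z_1^2 P(z_1)$ is genuinely comparable to $c_0 z_1^2$ from above and below, and simultaneously so that $|a_n f_2^n(q)| = |a_n \cdot a_{n-1} f_1^{n-1}(q)| \ll |f_1^{n-1}(q)|^2 P(f_1^{n-1}(q))$ for all large $n$ — and this last comparison is precisely where the hypothesis $\lim a_n^{-2^n} = 0$ (equivalently $a_n$ decaying faster than any $C^{2^n}$) is used, so it should go through. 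A clean way to organize this is to first establish, by induction, two-sided bounds $c_1^{2^n} \le \max\{|f_1^n(q)|, |f_2^n(q)|\} \le c_2^{2^n}$ with $0 < c_1 \le c_2 < 1$ for $q$ in a suitable set, then read off $\psi(q) \in [\log c_1, \log c_2]$, which is finite and hence $\neq \psi(0)$. Alternatively, and perhaps more in the spirit of the paper, one can cite the analogous non--constancy argument from \cite{ShortCk} for the model maps and note that the estimates above reduce the present case to it; I would likely do this to keep the proof short, spelling out only the new estimate controlling the $a_n z_2$ term.
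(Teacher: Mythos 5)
Your overall strategy is the same as the paper's: since $F_n\in\mathsf{Aut}_0(\mbb C^2)$ and $\phi_n(0)=a_n$, one has $\psi(0)=-\infty$, so it suffices to produce one point $q\in\Omega_{\{F_n\}}$ with $\psi(q)>-\infty$, and such a point should be one whose orbit's first coordinate decays only like $C^{2^n}$. However, the step you yourself flag as the main obstacle --- the lower bound $|f_1^n(q)|\ge C^{2^n}$ --- is precisely where the paper's proof has its one idea, and your sketch of how to obtain it does not close. The paper takes a point $(x,y)$ with $x,y>0$ and uses the hypotheses that the $a_n$ are strictly positive reals and that $P$ has \emph{positive} coefficients with $P(0)=c_0>0$: then every term in $f_1^{n+1}=a_{n+1}f_2^n+(f_1^n)^2P(f_1^n)$ is positive, so trivially $f_1^{n+1}\ge c_0\,(f_1^n)^2$, no cancellation can occur, and the bound $f_1^n(x,y)>c_0^{-1}(c_0x)^{2^{n+1}}$ follows by a one-line induction; this is exactly what the positivity hypotheses in Theorem 1.1 are for, and your argument never uses them.

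Your substitute --- ``the term $a_nz_2$ never dominates because $a_n$ decays double-exponentially'' --- is not correct as stated and hides a circularity. If, as you propose, you take $q=(w,0)$ with a ``small enough starting first coordinate,'' then at the early steps the cross term \emph{does} dominate: $f_1^2(q)=a_2a_1w+(w^2P(w))^2P(w^2P(w))$, and $a_2a_1|w|\gg c_0^3|w|^4$ as $|w|\to0$ with $a_1,a_2$ fixed; whenever the two terms are of comparable size you cannot rule out near-cancellation, and a quantitative lower bound is what is needed (non-vanishing alone gives nothing, since $\phi_n\ge a_n$ already and $\tfrac1{2^n}\log a_n\to-\infty$). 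Moreover the comparison $a_{n+1}|f_2^n|=a_{n+1}a_n|f_1^{n-1}|\ll |f_1^n|^2|P(f_1^n)|$ presupposes the very bound $|f_1^n|\ge C^{2^n}$ being proved, and the time beyond which $a_n\le\ep^{2^n}$ depends on $\ep$, hence on $C$, hence on the starting point, so the induction has no controlled initial segment. The argument can be repaired, but only with an ingredient you do not supply: either the paper's positivity trick, or starting the estimate at a late time with a \emph{not}-too-small first coordinate (e.g.\ taking $q=F(n_0)^{-1}(\zeta)$ with $|\zeta_1|$ of fixed size and $n_0$ so large that the tail of $\{a_n\}$ lies below the required double-exponential threshold). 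Deferring instead to ``the analogous argument'' in the references does not fill the gap, since those arguments concern the model maps without the higher-order terms whose treatment is the whole point of this lemma.
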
 
\begin{proof}
\no Since $\{F_n\} \subset \mathsf{ Aut}_0(\mbb C^2)$, $\psi(0)=-\infty.$  If $\psi$ is constant, then $\psi\equiv -\infty$ on $\Omega_{\{F_n\}}.$ 

\medskip\no 
\textit{Induction statement:} For $x>0$ and $y>0$, $f_i^n(x,y)>0$ for every $i=1,2$ and $$\phi_n(x,y)> f_1^n(x,y)> c_0^{-1}(c_0 x)^{2^{n+1}}$$ for every $n \ge 0.$ 

\medskip\no 
\textit{Initial step: }It is true since, $\pi_1 \circ F_0(x,y)=a_0 y+x^2P(x)>c_0 x^2>0$ and $\pi_2 \circ F_0(x,y)=a_0 x>0.$

\medskip\no 
\textit{General step: }Assume that $f_1^n(x,y)> c_0^{-1}(c_0 x)^{2^{n+1}}>0$ and $f_2^n(x,y)>0.$ Let 
\[ f_1^n(x,y)= c_0^{-1}(c_0 x)^{2^{n+1}}+ c_n\] for some $c_n>0.$ Now 
$$ f_1^{n+1}(x,y)=a_{n+1}f_2^n(x,y)+(c_0^{-1}(c_0 x)^{2^{n+1}}+ c_n)^2 P(f_1^n(x,y))>c_0^{-1}(c_0 x)^{2^{n+2}}>0$$ and 
$$f_2^{n+1}(x,y)=a_{n+1}f_1^n(x,y)>0.$$
Since $\De^2(0;c) \subset \om{F}$ for $x,y>0$ and $(x,y) \in \De^2(0;c)$ it follows that 
\[ \psi_n(x,y)=\frac{\log c_0^{-1}}{2^{n}}+ \log c_0x \to \log c_0x \neq -\infty \] as $n \to \infty.$ Hence $\psi$ is non--constant on $\om{F}.$
\end{proof}
\no This completes the proof of Theorem \ref{poly_short}.
\end{proof}
\begin{rem}\label{shift}
Let 
$$S_n(z_1,z_2,\hdots,z_k)=(z_1^2P(z_1)+a_n z_k,a_n z_1,\hdots,a_n z_{k-1})$$
be a sequence of shift--like maps in $\mbb C^k$, $k \ge 3$, where $P$ and $\{a_n\}$ are as in Theorem \textbf{\ref{poly_short}}. The same techniques can be adapted to prove that $\Omega_{\{S_n\}}$ (the basin of attraction of $S_n$'s at the origin) is a \short{k}.
\end{rem}
\no Next, we prove a few properties of a non--autonomous basin of attraction, satisfying the \textit{uniform upper--bound} condition. 
\begin{prop}\label{uniform bound}
Let $\seq{S} \in \mathsf{ Aut}_0(\mbb C^k)$, $k \ge 2$ with \textit{uniform upper--bound condition} at the origin. Then $\om{S}$ satisfies the following properties:
\begin{itemize}
\item[(i)] $\om{S}$ is a connected open set in $\mbb C^k.$

\item[(ii)]There exists $r_0>0$ such that for every $0<r \le r_0$, $\Om^S_n \subset \Om^S_{n+1} $ and $$\om{S}=\bigcup_{n \ge 0} \Om^S_n$$ where $\Om^S_n=S(n)^{-1}(B^k(0;r)).$ 
\item[(iii)] The infinitesimal Kobayashi metric vanishes identically on $\om{S}.$
\end{itemize}  
\end{prop}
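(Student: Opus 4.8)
\textbf{Proof proposal for Proposition \ref{uniform bound}.}

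The plan is to mimic the standard argument that a non--autonomous basin of a sequence contracting towards a fixed point is an increasing union of pullbacks of a fixed ball, exploiting the \textit{uniform upper--bound} condition to make all the estimates uniform in $n$. First I would fix the constants $r>0$ and $0<C<1$ supplied by the hypothesis, so that $\|S_n(z)\| < C\|z\|$ for every $z \in B^k(0;r)$ and every $n$. Choosing $r_0 = r$ (or any smaller positive radius), the key observation is that for $0 < r \le r_0$ the ball $B^k(0;r)$ is mapped by each $S_n$ into $B^k(0;Cr) \subset B^k(0;r)$; iterating, $S(n)\big(B^k(0;r)\big) \subset B^k(0;C^n r)$. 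Consequently $\Omega^S_n = S(n)^{-1}(B^k(0;r))$ contains $S(n)^{-1}(B^k(0;Cr)) \supset S(n+1)^{-1}(B^k(0;r)) = \Omega^S_{n+1}$ — wait, I must be careful with the direction: since $S(n+1) = S_{n+1}\circ S(n)$ and $S_{n+1}$ maps $B^k(0;r)$ into $B^k(0;Cr)\subset B^k(0;r)$, any $z$ with $S(n+1)(z)\in B^k(0;r)$ automatically has $S(n)(z) \in S_{n+1}^{-1}(B^k(0;r))$; because $S(n)(z)$ is small (it lies in $B^k(0;r)$ by the descent already established for points of the basin), $S_{n+1}(S(n)(z)) \in B^k(0;Cr)$ forces $S(n)(z)\in B^k(0;r)$, giving $\Omega^S_n \subset \Omega^S_{n+1}$. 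That $\bigcup_n \Omega^S_n = \om{S}$ is then immediate from the definition of the basin: $z$ is in the basin iff $S(n)(z)\to 0$, which happens iff $S(n)(z)\in B^k(0;r)$ for some (hence all larger) $n$, using again that the ball is forward--invariant under the tail. Each $\Omega^S_n$ is biholomorphic to the ball since $S(n)$ is an automorphism of $\mbb C^k$ and $B^k(0;r)$ is biholomorphic to $B^k(0;1)$; connectedness and openness of $\om{S}$ follow because it is an increasing union of connected open sets all containing the origin.

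For part (iii) I would argue exactly as in the classical short $\mbb C^k$ / Fatou--Bieberbach setting: fix $p \in \om{S}$ and a tangent vector $\xi$, pick $n$ large enough that $p \in \Omega^S_n$, and use that $\Omega^S_n$ is biholomorphic to a ball to transport an analytic disc. The crucial point is that, because $\Omega^S_n \subset \Omega^S_{n+1} \subset \cdots$ and the radii of the balls $B^k(0;r)$ being pulled back stay fixed while the "effective size" of $\Omega^S_n$ in $\mbb C^k$ grows, the Kobayashi length of $\xi$ measured in $\Omega^S_n$ tends to $0$ as $n\to\infty$; more precisely one uses a disc through $p$ in $\Omega^S_n$ whose derivative at the origin can be taken as large as one likes. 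Since $k_{\om{S}} \le k_{\Omega^S_n}$ for every $n$ and the right side $\to 0$, we get $k_{\om{S}}(p;\xi) = 0$ identically. Alternatively — and this may be cleaner — one invokes the general fact (as in \cite{ShortC2}, used in the Lemma in the proof of Theorem \ref{poly_short}) that an increasing union of biholomorphic images of the ball on which the images exhaust an unbounded domain has vanishing infinitesimal Kobayashi metric; here the estimate $S(n)^{-1}(B^k(0;r)) \supset S(n)^{-1}(B^k(0;Cr))$ plus $C<1$ uniformly is what guarantees the exhaustion really grows.

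The step I expect to be the main obstacle is making the monotonicity $\Omega^S_n \subset \Omega^S_{n+1}$ completely rigorous for \emph{every} $0 < r \le r_0$ rather than merely for $r$ small enough and $n$ large enough. Unlike the situation in Theorem \ref{poly_short}, where higher order terms forced "sufficiently large $n$", the \textit{uniform upper--bound} condition should let us get monotonicity for all $n \ge 0$ once $r \le r_0 := r$ (the radius from the hypothesis), precisely because the contraction factor $C$ does not depend on $n$; I would want to write this out carefully, checking that a point $z$ with $S(n+1)(z) \in B^k(0;r)$ has its whole forward trajectory $S(1)(z),\dots,S(n)(z)$ inside $B^k(0;r)$ — this needs a small backward induction using invariance of $B^k(0;r)$ under each $S_j$, and is the only place where one could slip. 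Everything else (that each $\Omega^S_n$ is biholomorphic to the ball, that the union is the basin, connectedness) is then routine.
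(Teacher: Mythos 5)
The overall strategy matches the paper's proof: forward--invariance of $B^k(0;r)$ under every $S_n$ gives the monotonicity and the exhaustion for (i)--(ii), and pulling back linear discs through $S(n)(p)$ gives (iii). But two points need attention.

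First, the monotonicity argument $\Om^S_n\subset\Om^S_{n+1}$ is carried out in the wrong direction twice, and the "obstacle" you flag at the end is not the actual issue. Your first chain claims $S(n)^{-1}(B^k(0;Cr))\supset S(n+1)^{-1}(B^k(0;r))$, which is false: from $S_{n+1}(B^k(0;r))\subset B^k(0;Cr)$ you only get $B^k(0;r)\subset S_{n+1}^{-1}(B^k(0;Cr))$, not the containment you wrote. Your "corrected" argument begins with $z$ satisfying $S(n+1)(z)\in B^k(0;r)$ (i.e.\ $z\in\Om^S_{n+1}$) and tries to deduce $S(n)(z)\in B^k(0;r)$ — that is the reverse inclusion $\Om^S_{n+1}\subset\Om^S_n$, and it is also circular, since you invoke "$S(n)(z)$ is small" which is exactly what would need proving. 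No backward induction is required. The correct argument is the one--liner the paper gives: if $z\in\Om^S_n$ then $S(n)(z)\in B^k(0;r)$, so $S(n+1)(z)=S_{n+1}(S(n)(z))\in S_{n+1}(B^k(0;r))\subset B^k(0;Cr)\subset B^k(0;r)$, i.e.\ $z\in\Om^S_{n+1}$. Equivalently, $B^k(0;r)\subset S_{n+1}^{-1}(B^k(0;r))$ and then apply $S(n)^{-1}$ to both sides.

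Second, the "alternative, cleaner" route for (iii) invokes a nonexistent general fact: an increasing union of domains biholomorphic to the ball, even if unbounded, does not in general have vanishing infinitesimal Kobayashi metric (a strip in $\mbb C$ already fails this). What makes the metric vanish here is the quantitative contraction built into $S(n)$: both $p_n=S(n)(p)\to 0$ and $\xi_n=DS(n)\xi\to 0$, so for any $R>0$ the affine disc $x\mapsto p_n+Rx\xi_n$ lands in $B^k(0;r)$ once $n$ is large, and pulls back under $S(n)^{-1}$ to a disc through $p$ with derivative $R\xi$. Your primary argument gestures at this ("derivative at the origin can be taken as large as one likes") and is essentially the paper's, but you should tie the arbitrariness of the derivative explicitly to $\xi_n\to 0$; the vague appeal to "effective size grows" does not by itself justify it.
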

\begin{proof}
By assumption there exist $r_0>0$ and $C<1$ such that
\[ \|S_n(z)\| \le C\|z\|\] for every $z \in \ov{B^k(0;r_0)}$ and $n \ge 0.$ Further, for every $0< r \le r_0$,  $B^k(0;r) \subset S_n^{-1}(B^k(0;r)).$ Hence $\Om_n^S \subset \Om_{n+1}^S.$ Similar arguments as in the proof of Theorem \textbf{\ref{poly_short}} gives $\om{S}=\cup_{n \ge 0} \Om^S_n.$ This proves (i) and (ii).

\medskip\no 
Fix $p \in \om{S}$ and $\xi \in T_p\om{S}$, then for $p_n \to 0$ and $\xi_n \to 0$ as $n \to \infty$, where
$p_n=F(n)(p) \text{ and } \xi_n=DF(n)\xi.$ For any $R>0$, consider the maps from unit disc, $\eta_n : \Delta(0;1) \to \mbb C^k$ defined as $\eta_n(x)=p_n+xR \xi_n.$ Let $\tau_n=F(n)^{-1} \circ \eta_n.$ Since $\eta_n(\Delta(0;1)) \subset B^k(0;r)$ for $n$ sufficiently large, $\tau_n(\Delta(0;1)) \subset \om{S}.$ Now $\tau_n(0)=p$ and $\tau_n'(0)=R \xi.$ As $R>0$ is arbitrary, (iii) is true.
\end{proof}
\no Finally, we prove Theorem \textbf{\ref{transcendence}}.
\begin{proof}[Proof of Theorem \ref{transcendence}]
By assumption there exist $0<r<1$ and $C<1$ such that
\[ \|S_n(z)\|< C\|z\|\] for every $z \in \ov{B^k(0;r)}$ and $n \ge 0$, i.e.,  $S_n(\ov{B^k(0;r)}) \subset {B^k(0;Cr)}$ for all $n \ge 0$. Let $r_0=Cr$ and $0<\ep< r-Cr<1.$ Then 
\begin{align}\label{inverse 1}
 B^k(0; r_0+\ep) \subset B^k(0;r) \subset S_n^{-1}(B^k(0;r_0))
\end{align}
for every $n \ge 0.$ Let $$M_n=\max\{\|D(S(n-1))^{-1}(z)\|_{\text{op}}: z \in B^k(0;r)\}$$ for every $n \ge 0$ and choose $0<\ep_n<{\ep^{n+1}}/{M_n}.$ Further, let $0<\delta< \min\{\ep,1-C\}$ and $\tilde{C}=C+\delta<1.$ Thus
\begin{align}\label{inverse 1a}
F_n(B^k(0;r)) \subset B^k(r_0+\delta_n) \subset B^k(0;r).
\end{align} 
By continuity of the functions $S_n^{-1}$'s there exists $\tilde{\delta}_n>0$ such that 
\[ \|S_n^{-1}(z)-S_n^{-1}(w)\|< \ep_n\] whenever $\|z-w\|< \tilde{\delta}_n$ for $z,w \in B^k(0;r).$ Let $$\delta_n=\min\{\delta \tilde{C}^{n}r_0,\tilde{\delta}_n\}.$$
As $\|F_n(z)-S_n(z)\|< \delta_n$ for every $z \in \ov{B^k(0;r)}$ and $r_0+\delta_n<r_0+\ep<r<1$ for every $n \ge 0$
\begin{align}\label{inverse 2}
\nonumber \|z-S_n^{-1} \circ F_n(z)\|< &\ep_n \text{ for every } z \in B^k(0;r_0) \\ 
\text{i.e.,} \; \|F_n^{-1}(z)-S_n^{-1}(z)\|< &\ep_n \text{ for every } z \in F_n^{-1}(B^k(0;r_0)).
\end{align}
\textit{Claim: } $\ov{B^k(0;r_0)} \subset \subset F_n^{-1}(B^k(0;r_0))$ for every $n \ge 0.$
 
\medskip\no From (\ref{inverse 1}) and (\ref{inverse 2}), it follows that $S_n^{-1}(B^k(0;r_0)) $ is contained in an $\ep_n-$neighbourhood \\of $F_n^{-1}(B^k(0;r_0))$, i.e., $$B^k(0;r) \subset\subset S_n^{-1}\big(B^k(0;r_0)\big) \subset \Big(F_n^{-1}\big(B^k(0;r_0)\big)\Big)_{\ep_n} .$$ But $$B^k(0;r_0)_{\ep_n} =B^k(0;r_0+\ep_n) \subset B^k(0;r) $$
 Hence for every $n \ge 0$, $$\ov{B^k(0;r_0)} \subset B^k(0;r-\ep_n) \subset \subset F_n^{-1}(B^k(0;r_0)).$$Thus (\ref{inverse 2}) is true for every $z \in \ov{B^k(0;r_0)}$. Further, by the choice of $\delta_n$
\begin{align}\label{inverse 3}
 \|S_n(z)-F_n(z)\|< \delta \tilde{C}^{n}r_0
 \end{align} for every $z \in \ov{B^k(0;r_0)}.$ For $z \in \partial B^k(0;r_0)$ then $\|S_n(z)\|< Cr_0$ hence 
\begin{align}\label{inverse 4}
\|F_n(z)\|< Cr_0+ \delta r_0< \tilde{C}r_0.
\end{align}

\medskip\no 
\textit{Induction hypothesis: } If $z \in \ov{B^k(0;r_0)}$, then $F(n)(z) \in B^k(0;\tilde{C}^{n+1} r_0).$

\medskip\no 
\textit{Initial step: }From (\ref{inverse 4}) note that $F_0(z) \in B^k(0;\tilde{C}r_0)$ for $z \in B^k(0;r_0).$

\medskip\no 
\textit{General step: } Suppose the claim is true for some $n \ge 0$. Let $z \in \partial B^k(0;\tilde{C}^{n+1} r_0).$ From (\ref{inverse 3}) 
\[ \|F_{n+1}(z)\| < C \tilde{C}^{n+1}r_0+\delta \tilde{C}^{n+1}r_0 \le \tilde{C}^{n+2}r_0.\]
Hence $F_{n+1}(B^k(0;\tilde{C}^{n+1}r_0)) \subset B^k(0;\tilde{C}^{n+2}r_0)$, i.e., $F(n+1)(B^k(0;r_0)) \subset B^k(0;\tilde{C}^{n+2}r_0).$

\medskip\no 
Thus $B^k(0;r_0) \subset \om{F}$. Also by similar arguments it follows that for every $z \in \ov{B^k(0;r_0)}$ and $0 \le i \le n$
\begin{align}\label{inverse 5}
F_{n+i} \circ F_{n+i-1}\circ \hdots \circ F_i(z) \in B^k(0;\tilde{C}^{n+1} r_0).
\end{align}
Let $\Omega_n^F=F(n)^{-1}(B^k(0;r_0))$. Now from the above claim, (\ref{inverse 5}) and (\ref{inverse 4}),  $\Omega_n^F \subset \Omega_{n+1}^F$ and 
\[ \om{F} =\bigcup_{n=0}^{\infty} \Omega_n^F.\] So $\om{F}$ is a connected open set containing the origin.

\medskip\no 
Let $\phi_n(z)=S(n)^{-1}F(n)(z) \in \mathsf{ Aut}_0(\mbb C^k).$

\medskip\no 
\textit{Claim: } $\phi_n \to \phi$ on compact subsets of $\om{F}.$

\medskip\no 
Suppose $ K $ is a compact subset of $\om{F}$, it is enough to show that for a given $\eta>0$ there exists $n_0 \ge 0$ such that $$\|\phi_n(z)-\phi_m(z)\|< \eta$$ for every $z \in K$ and $n,m \ge n_0.$

\medskip\no 
Choose $n_0 \ge \max\{n_1,n_2\}$ where $\ep^{n_1}< \eta (1-\ep)$ and $ K \subset \Om_n^{F}$ for every $n \ge n_2.$ Then 
\begin{align*}
\|\phi_n(z)-\phi_m(z)\| \le \sum_{i=n}^{m-1} \|\phi_{i+1}(z)-\phi_i(z)\|\le \sum_{i=n}^{m-1} \|S(i+1)^{-1}F(i+1)(z)-S(i)^{-1}F(i)(z)\|
\end{align*}
for every $z \in K.$ Now $F(i)(z) \in B^k(0;r_0)$ for every $n \le i \le m-1$, i.e., 
\begin{align*}
\|S_{i+1}^{-1}\circ F(i+1)(z)-F(i)(z)\| =& \|(S_{i+1}^{-1}\circ F_{i+1}-\text{Id})(F(i)(z))\| < \ep_{i+1}.
\end{align*}
Thus $S_{i+1}^{-1}\circ F(i+1)(z) \in B^k(0;r)$ for every $z \in K$ and
\[ \|S(i+1)^{-1}F(i+1)(z)-S(i)^{-1}F(i)(z)\|\le M_i \ep_{i+1}< \ep^{i+1}.\]
Hence, for every $z \in K$
\begin{align*}
\|\phi_n(z)-\phi_m(z)\| \le \frac{\ep^{n+1}}{1-\ep}< \eta.
\end{align*}
Since $\phi_n$ converges uniformly on compact subset of $\om{F}$, $\phi$ is holomorphic on $\om{F}.$ 

\medskip\no 
\textit{Claim: } $\phi$ is injective on $\om{F}.$

\medskip\no 
Since $\phi$ is the limit of injective maps, an application of Hurwitz's Theorem shows that either, $\phi$ is injective or $\phi(\mbb C^k)$ has empty interior. Let $\Omega_{n}^S=S(n)^{-1}(B^k(0;r_0))$ then from Proposition \textbf{\ref{uniform bound}}, $\om{S}=\cup_{n=0}^{\infty} \Omega_n^S.$ Further, $\phi_n(\Om_n^F)=\Om_n^S$ for every $n \ge 0.$ By uniform convergence of $\phi_n$'s on relatively compact subsets of $\om{F}$, for a sufficiently small $0<\eta<r_0$ there exists $n$ sufficiently large 
\[ B^k(0;r_0) \subset \Om_n^S \subset \Big(\phi\big(\Om_n^F\big)\Big)_{\eta}.\]
Here $\Big(\phi\big(\Om_n^F\big)\Big)_{\eta}$ is an $\eta-$neighbourhood of $\phi\big(\Om_n^F\big).$ Now if interior of $\phi(\om{F})$ is empty then from above condition $B^k(0;r_0) \subset  B^k(0;\eta)$, which is a contradiction! Hence the claim.

\medskip\no 
\textit{Claim: } $\phi(\om{F})=\om{S}.$ 

\medskip\no 
Suppose $z=\phi(w)$ for some $w \in \om{F}.$ Let $z_n=\phi_n(w)$, i.e., $z_n \in \Om_n^S$ for $n$ sufficiently large. Now $z_n \to z$, i.e., $z \in \om{S}$ or $z \in \partial \om{S}.$ Let $z \in \partial \om{S}$. Since $\phi$ is injective there exists $z_0 \notin \ov{\om{S}}$ such that $z_0 \in \phi(\om{F})$ but arguments similar as above should give $z_0 \in \om{S}$ or $z_0 \in \partial \om{S}.$ This is a contradiction! Thus $\phi(\om{F}) \subset \om{S}.$

\medskip\no 
Suppose $z \in \om{S}$ and $z \notin \phi(\om{F})$ then there exists $\rho>0$ such that $\ov{B^k(z;\rho)} \cap \phi(\om{F})=\emptyset$ and $\ov{B^k(z;\rho)} \subset \om{S}$, i.e., 
$$\ov{B^k(z;\rho)} \subset \Om_n^{S}=\phi_n \big (\Om_n^F \big)$$ for $n \ge n_0.$ For $n$ sufficiently large
$$\phi\big(\Om_n^F\big) \subset \big(\Om_n^S\big)_{\eta} \text{ and }\Om_n^S \subset \Big(\phi\big(\Om_n^F\big)\Big)_{\eta}$$ for $0<\eta<\rho.$ But by choice $z \notin \phi(\om{F})_{\eta}$, i.e., $z \notin \Om_n^S$ for every $n \ge 0$ which is a contradiction! Hence $\phi(\om{F}) = \om{S}.$
\end{proof}
\begin{rem}\label{remark to transcendence}
The choice of $\delta_n$ in the proof of Theorem \textbf{\ref{transcendence}}, depends on the radius of the ball, i.e., $r>0$ where $\seq{S}$ satisfies the \textit{uniform upper--bound} condition. However, the choice of $\delta_n(\tilde{r})$ can be appropriately modified whenever $0< \tilde{r}<r$ to give that $\om{F} \cong \om{S}$ if
\[ \|F_n(z)-S_n(z)\|< \delta_n(\tilde{r})\] for every $z \in B^k(0;\tilde{r}).$ 
\end{rem}
\section{Short \texorpdfstring {$\mathbb{C}^k$}{}s with boundary having upper--box dimension greater than \texorpdfstring {$2k-1$}{}}
 \begin{lem}\label{convergence}
Let $P$ be a hyperbolic polynomial and $J_P(\delta_0)$ denote the $\delta_0-$neighbourhood of the Julia set of $J_P$, then there exists $\{c'_n\}$ a sequence positive real numbers converging to $0$ such that if $|w_n| \le c'_n $ and $z_0 \in \mbb C \setminus P^{-1}(J_P(\delta_0))$ then as $n \to \infty$, either
\[P(z_n)+w_n \to 0 \text{ or } P(z_n)+w_n \to \infty\]
where $z_n=P(z_{n-1})+w_{n-1}$ for $n \ge 1.$
\end{lem}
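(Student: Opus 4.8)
The plan is to reduce the statement to a dichotomy for the unperturbed iteration $z\mapsto P(z)$ and then show that small additive perturbations $w_n$ with $|w_n|\le c'_n$, $c'_n\to 0$, cannot disturb this dichotomy once the orbit has left a neighbourhood of the Julia set. Recall that for a hyperbolic polynomial $P$ of degree $d\ge 2$, the Fatou set consists of the basin of attraction of finitely many attracting cycles together with the basin of infinity $\mathcal A(\infty)$, and $P$ is expanding on a neighbourhood of $J_P$; moreover the complement $\mbb C\setminus P^{-1}(J_P(\delta_0))$ is contained in the union of the (open) Fatou components, and after one application of $P$ each point of this set lies in a compact subset of the Fatou set on which iteration converges \emph{locally uniformly} either to $\infty$ or to one of the attracting cycles. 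So the first step is: fix $\delta_0>0$ small enough that $J_P(\delta_0)$ is forward/backward consistent with the hyperbolic structure (i.e. $P$ is uniformly expanding on $\ov{J_P(\delta_0)}$ and this neighbourhood is mapped over itself in the Julia direction), and record that for $z_0\in\mbb C\setminus P^{-1}(J_P(\delta_0))$ the unperturbed orbit $z_0,P(z_0),P^2(z_0),\dots$ stays in the Fatou set and is attracted to $\infty$ or to an attracting cycle.

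Next I would set up the perturbation bookkeeping. Split $\mbb C\setminus P^{-1}(J_P(\delta_0))$ according to which Fatou component $P(z_0)$ belongs to. On the basin of infinity, there is a Böttcher-type estimate: for $|z|$ large, $|P(z)|\ge \tfrac12|z|^d$ say, and more generally $\log|z_n|$ grows at a definite geometric rate once $|z_n|$ exceeds some escape radius $R$; a perturbation of size $|w_n|\le c'_n\le 1$ changes $|P(z_n)+w_n|$ by at most $1$, which is negligible against $\tfrac12|z_n|^d$, so escape is preserved and in fact the escape radius is reached in finitely many steps uniformly on compact subsets. On the basin of an attracting cycle, pick a trapping region: a finite union of small disks $D$ around the cycle points, cyclically permuted by $P$ with a uniform contraction factor $\la<1$, i.e. $P$ maps $D$ into the $\la$-shrunk version of the next disk. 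Choose $N$ so that the finite part of the orbit (before entering $D$) of any point in the relevant compact set is controlled, and choose $c'_n$ small enough — here is where the convergence $c'_n\to 0$ is used — that once $z_n\in D$ we have $|P(z_n)+w_n - (\text{cycle point})|\le \la|z_n-(\text{cycle point})| + c'_n$, and with $c'_n\to 0$ the standard perturbed-contraction argument ($a_{n+1}\le \la a_n + c'_n$ with $c'_n\to0$ forces $a_n\to0$) gives convergence to the cycle. Actually the statement as phrased has the orbit converging to $0$, so I would either assume $0$ is the attracting fixed point (as in the application, where $P$ will be conjugated to have its relevant attracting behaviour at the origin) or read ``$\to 0$'' as ``$\to$ the attracting cycle''; in the application only the $0$/$\infty$ dichotomy is needed.

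The one genuinely delicate point is the \emph{uniformity} needed to extract a single sequence $\{c'_n\}$ that works for \emph{all} admissible $z_0$ simultaneously, since $z_0$ ranges over a non-compact set $\mbb C\setminus P^{-1}(J_P(\delta_0))$. The key observation that resolves this: after one iterate, $P(z_0)$ lies in the Fatou set, but the \emph{number of steps} needed to reach the trapping region $D$ or the escape radius $R$ is not bounded on all of the Fatou set (points near $J_P$ take long to decide). So the perturbation tolerance must genuinely shrink with $n$: I would let $K_m$ be an exhaustion of the Fatou set by compact sets, note that on $K_m$ the unperturbed orbit reaches $D\cup\{|z|>R\}$ within $N(m)$ steps with a definite margin $\mu(m)>0$ to the relevant boundary, and then choose $c'_n$ small enough (depending on $n$ through a diagonal over $m\le n$) that a perturbation of total size $\sum_{j\le N(m)}(\text{local Lipschitz const of }P^{N(m)-j})c'_j$ stays below $\mu(m)$. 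Because any fixed $z_0$ with $P(z_0)$ in the Fatou set lies in some $K_m$ and the perturbations from step $m$ onward are as small as we like, the orbit still enters the trapping/escape regime and the earlier analysis takes over. Thus the main obstacle is purely this diagonalization to get one universal sequence $c'_n\to 0$; once that is arranged, the dichotomy follows from the hyperbolic (expanding/contracting) dynamics of $P$ plus the elementary perturbed-contraction and perturbed-escape lemmas.
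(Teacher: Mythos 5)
Your overall plan (trapping region on the bounded side, escape estimate on the unbounded side, and a perturbation budget $c'_n\to 0$ small enough that a perturbed orbit cannot change its fate once it has left a neighbourhood of $P^{-1}(J_P)$) is the same mechanism the paper uses, but the point you identify as ``the one genuinely delicate point'' is not actually an issue here, and the fix you propose for it has a flaw that the paper's formulation sidesteps entirely. The paper works directly with the bounded component $C$ of $\mbb C\setminus P^{-1}(J_P(\delta_0))$, exploiting that $P(C)$ is compactly contained in $C$ (this is the hyperbolicity: $P^{-1}$ contracts a neighbourhood of $J_P$, so the complement of $P^{-1}(J_P(\delta_0))$ is strictly larger than the complement of $J_P(\delta_0)$). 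It then builds a single nested chain $C=C_0\supset C_1\supset\cdots$ with $P(C_{n-1})_{2\delta_n}\subset C_{n-1}$, $C_n=P(C_{n-1})_{\delta_n}$, and $\operatorname{diam}(C_n)\to 0$, and takes $c'_n<\delta_n$. Uniformity over $z_0$ is then automatic and requires no exhaustion: every $z_0\in C$ is swept into $C_1$ after one perturbed iterate, and thereafter the chain traps it. The mirror argument on the unbounded component produces the $\eta_n$, and $c'_n=\min\{\delta_n,\eta_n\}$.

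The diagonalisation you propose over an exhaustion $\{K_m\}$ of the Fatou set is both unnecessary and, as sketched, broken. You want to bound $\sum_{j\le N(m)}L^{\,N(m)-j}\,c'_j$ below a fixed margin $\mu(m)$, but the leading term $L^{N(m)}c'_0$ involves the already-fixed $c'_0>0$ amplified by $L^{N(m)}$ with $L>1$; as $m\to\infty$ and $N(m)\to\infty$ this diverges, so no choice of the later $c'_n$ can save the estimate. This is precisely the ``points near $J_P$ take arbitrarily long to decide'' failure you were worried about, and it is real for the whole Fatou set --- but the lemma's hypothesis $z_0\in\mbb C\setminus P^{-1}(J_P(\delta_0))$ excludes exactly those points: on the bounded component, $C$ is pre-compact and $P(\ov C)\subset C$, so the transit time to any fixed small neighbourhood of the attracting cycle is uniformly bounded; on the unbounded component, being far out only makes escape faster. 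Once you observe that the admissible $z_0$'s form a set that is already a uniform trapping region after one iterate, the diagonal is superfluous and the paper's direct nested construction gives the sequence $c'_n$ in one step. (You are right, incidentally, that ``$\to 0$'' should be read as ``$\to$ the attracting cycle'': the application uses a polynomial $z^2p(z)$ with an attracting fixed point at $0$ and a two-component Fatou set, and the paper's proof silently assumes exactly one bounded and one unbounded component of $\mbb C\setminus P^{-1}(J_P(\delta_0))$.)
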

\begin{proof}
Suppose $z_0$ lies in a compact component of $\mbb C \setminus P^{-1}(J_P(\delta_0))$, say $C$. Let $\delta_1>0$ be chosen such that the $2 \delta_1$ neighbourhood of $P(C)$, i.e., $P(C)_{2\delta_1} \subset C.$ Let $C_1=P(C)_{\delta_1}$, and similarly choose $\delta_2>0$ such that $P(C_1)_{2 \delta_2} \subset C_1.$ Now inductively define $C_n=P(C_{n-1})_{\delta_n}$ for $n \ge 2$ where $\delta_n>0$ is appropriately chosen to satisfy
\[ P(C_n)_{2 \delta_{n+1}} \subset C_n.\] Clearly $\text{diam}(C_n) \to 0$. Hence for $z_0 \in C$ and $|w_n|< \delta_n$, the sequence $z_n \to 0$ as $n \to \infty.$ 

\medskip\no 
A similar argument on the non--compact component of $\mbb C \setminus P^{-1}(J_P(\delta_0))$ gives a sequence $\eta_n$ such that if $|w_n|< \eta_n$, then $z_n \to \infty$ as $n \to \infty.$ Finally choose $c'_n < \min\{ \delta_n, \eta_n\}$ for every $n \ge 1.$ 
\end{proof}
\no Let $p$ be as in Theorem \textbf{\ref{poly_short}} and $P(z)=z^2p(z).$ For a given $\delta>0$ there exists $R>0$ such that $J_P(\delta) \subset D(0;R).$ Consider $\seq{S} \subset \mathsf{ Aut}_0(\mbb C^k)$ as in Remark \textbf{\ref{shift}}. Let $V_R^+$, $V_R^-$ and $V_R$ be defined as:
\begin{align*}
V_R=&\{z \in \mbb C^k: |z_i|\le R \text{ for all } 1 \le i \le k\}\\
V_R^+=&V_R^1 \text{ and } V_R^-=\bigcup_{i=2}^k V_R^i
\end{align*} 
where for a fixed $i$, $1 \le i \le k$ 
\[ V_R^i=\{z \in \mbb C^k: |z_j| \le \max \{|z_i|,R\} \text{ for every } 1 \le j \le k\}.\]
\begin{lem} For $R>0$, sufficiently large
\begin{itemize}
\item[(i)] If $z \in \mbb C^k$, $F(n)(z) \in V_R \cup V_R^+.$

\medskip
\item[(ii)] If $z \in V_R^+$, $S(n)(z) \to \infty.$
\end{itemize}
\end{lem}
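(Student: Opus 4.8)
The plan is to analyze the dynamics of the shift-like maps $S_n(z_1,\dots,z_k)=(z_1^2P(z_1)+a_nz_k, a_nz_1,\dots,a_nz_{k-1})$ (with $P=p$, $P(z)=z^2p(z)$) coordinate by coordinate, using the block-triangular structure: the first coordinate picks up the polynomial term plus a small perturbation $a_nz_k$, while the remaining coordinates are just damped copies of earlier first-coordinates. First I would fix $R>0$ large enough that $J_P(\delta)\subset D(0;R)$, that $|P(w)|>2|w|$ for $|w|\ge R$ (possible since $\deg P\ge 2$ and $p(0)=c_0>0$ forces $P$ to dominate near infinity — we may enlarge $R$), and that $|P(w)|\le R$ for $|w|\le R$ after absorbing the bound $|z_1^2P(z_1)|<M|z_1|^2$. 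Since $a_n\to 0$ (indeed $a_{n+1}<a_n^2<1$), the perturbations $a_nz_k$ are uniformly small on bounded sets and summably controllable.

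For part (i), I would argue that $V_R\cup V_R^+$ is forward invariant under the whole sequence once $R$ is large. The sets $V_R^i$ for $i\ge 2$ are the traps one must rule out: if $F(n)(z)$ lands in some $V_R^i$ with $i\ge 2$, then $|z_i|$ is the dominant coordinate, but applying one more map sends $z_i$ into the $(i+1)$-st slot scaled by $a_{n+1}<1$ and pushes $z_{i-1}$ (also dominated) into slot $i$, so the "large" mass cascades toward the last coordinate and then into the first coordinate via $a_{n+1}z_k$, which is small; meanwhile the first coordinate evolves essentially by $P$. So the only way to stay large is to have the first coordinate itself be large, i.e. to be in $V_R^+$. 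Making this precise requires a short case analysis comparing $\max_j|z_j|$ before and after applying $S_{n+1}$, showing that if $z\in V_R^i$ with $i\ge 2$ then either $S_{n+1}(z)\in V_R$ or $S_{n+1}(z)\in V_R^+$ or $S_{n+1}(z)\in V_R^{i+1}$, and iterating at most $k-1$ times exits the $V_R^-$ region; combined with forward invariance of $V_R$ and $V_R^+$ this gives (i). The starting point is any $z\in\mbb C^k$: apply $S_1$ once and it already lies in $V_R\cup V_R^+$ if we enlarge $R$ depending on nothing (because for $|z_1|$ huge, $|z_1^2P(z_1)|$ dominates $a_1|z_k|$).

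For part (ii), suppose $z\in V_R^+$, so $|z_1|\ge R$ and $|z_1|\ge|z_j|$ for all $j$. Then the first coordinate of $S_1(z)$ is $z_1^2P(z_1)+a_1z_k$ with $|z_1^2P(z_1)|\ge 2|z_1|^2\ge 2R|z_1|$ and $|a_1z_k|\le|z_1|$, so the new first coordinate has modulus $\ge (2R-1)|z_1|\ge|z_1|$ and in fact grows geometrically; moreover it stays the dominant coordinate because the other new coordinates are $a_1z_1,\dots,a_1z_{k-1}$, each of modulus $\le|z_1|<$ the new first coordinate. Thus $S(n)(z)$ remains in $V_R^+$ with first coordinate of modulus at least $(2R-1)^n|z_1|\to\infty$. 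One should phrase this as an induction: $S(n)(z)\in V_R^+$ and $\|S(n)(z)\|\to\infty$; the perturbation terms $a_{n+1}(\text{stuff bounded by the current max})$ are always negligible against the factor $\ge 2R$ coming from $P$, so they never destroy dominance of the first slot. I would invoke Lemma \ref{convergence} in spirit here — the escape-to-infinity dichotomy for $P$-orbits with small additive perturbations — to streamline the first-coordinate estimate, noting $z_1\in\mbb C\setminus P^{-1}(J_P(\delta))$ lies in the unbounded component once $|z_1|\ge R$.

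The main obstacle I expect is part (i): carefully tracking how dominance moves among the $k$ coordinates under the shift structure and verifying that it cannot get "stuck" in an intermediate $V_R^i$, while simultaneously controlling the additive terms $a_nz_k$ uniformly in $n$. This is a finite but somewhat delicate bookkeeping argument, and choosing $R$ large enough (and noting $\sum a_n<\infty$, or at least $a_n$ bounded, so the cumulative contribution of perturbations stays below $R$) is where the real care is needed.
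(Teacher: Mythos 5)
The paper does not actually supply a proof of this lemma: it says only ``The arguments are same as in the proof of Lemma~3.2 and Lemma~3.3 in \cite{ShortC2},'' so your task is really to reconstruct Forn\ae ss's escape argument for the shift-like setting. Your sketch is on the right track, and your treatment of part~(ii) is essentially correct: for $R$ large, $|z_1|>R$ and $|z_j|\le|z_1|$ give $|P(z_1)+a_nz_k|\ge(2R-1)|z_1|$ while the other coordinates $a_nz_{j-1}$ stay below $|z_1|$, so $V_R^+\setminus V_R$ is forward invariant and the first coordinate grows geometrically. (A small notational slip: since $P(z)=z^2p(z)$, the first coordinate of $S_n$ is $P(z_1)+a_nz_k$, not $z_1^2P(z_1)+a_nz_k$; you evidently mean the correct thing, but it should be fixed.)

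For part~(i), two things need repair. First, the sentence ``apply $S_1$ once and it already lies in $V_R\cup V_R^+$'' is false: take $z_1=0$, $z_2$ huge, the rest zero; then $S_1(z)=(0,0,a_1z_2,0,\dots)$ is still in $V_R^3\setminus(V_R\cup V_R^+)$. Entering $V_R\cup V_R^+$ can take many steps, and one must also say ``for $n$ sufficiently large,'' which is the intended reading of the lemma (the ``$F(n)$'' there is a typo for ``$S(n)$''). Second, your trichotomy ``$S_{n+1}(z)\in V_R$ or $V_R^+$ or $V_R^{i+1}$'' breaks down when $i=k$ and also ignores the new mass that $P(z_1)$ can inject into the first slot; the bookkeeping you flagged as the real obstacle can instead be avoided entirely. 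Observe that for $j\ge2$ one has $S(n)(z)_j=a_nS(n-1)(z)_{j-1}$, hence $\max_{j\ge2}|S(n)(z)_j|=a_n\max_{1\le j\le k-1}|S(n-1)(z)_j|\le a_n\max_j|S(n-1)(z)_j|$. If the first coordinate is never the maximal one, then $\max_j|S(n)(z)_j|\le a_n\cdots a_1\max_j|z_j|\to0$, so eventually $S(n)(z)\in V_R$. Otherwise there is a first $n$ at which coordinate~$1$ is maximal, and then $S(n)(z)\in V_R\cup V_R^+$; combined with the forward invariance of $V_R\cup V_R^+$ established via the estimate in~(ii) (for $z\in V_R$ one checks $|S_n(z)_j|\le a_nR\le R$ for $j\ge2$, which already forces $S_n(z)\in V_R^+$), this gives~(i) cleanly. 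Your appeal to Lemma~\ref{convergence} for part~(ii) is harmless but unnecessary: the crude growth estimate suffices and does not require the delicate choice of $c_n'$ from that lemma.
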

\begin{proof}
The arguments are same as in the proof of Lemma \textbf{3.2} and Lemma \textbf{3.3} in \cite{ShortC2}.
\end{proof}  
\no Let
$$N_{C}=\{(z_1,z_2,\hdots,z_k) \in \mbb C^k: z_1 \in \mbb C, |z_i|<C \text{ for all } 2 \le i \le k\}$$ and $U\subset N_{C}$ be defined as:
\begin{align}\label{def of U}
U=\{(z_1,z_2,\hdots,z_k) \in N_{C}: P(z_1) \in J_P(\delta)\}.
\end{align}
Corresponding to the sequence $\seq{S}$, let $K^+_{\{S_n\}}$ and $\J{S}$ denote the following sets:
\begin{align*}
K^+_{\{S_n\}}=\{z \in \mbb C^k: \|S(n)(z)\| \text{ is bounded for every } n \ge 0\} , \; \J{S}=\partial K^+_{\{S_n\}}. 
\end{align*}
\begin{lem}\label{convergence of S_n}
There exists a sequence $\{c_n\}$ of positive real numbers decreasing to zero such that if $|a_{n}|<\min\{|a_{n-1}|^2,c_{n}\}$ for every $n \ge 0$, then $\J{S}\cap N_C \subset U. $
\end{lem}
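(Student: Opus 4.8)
I would prove the contrapositive pointwise: if $z=(z_1,\dots,z_k)\in N_C$ with $P(z_1)\notin J_P(\delta)$ (equivalently $z\notin U$), then $z$ has a neighbourhood lying entirely inside $\K{S}$ --- in fact inside the basin $\om{S}$ of the origin --- or entirely inside $\mbb C^k\sm\K{S}$; either way $z\notin\pa\K{S}=\J{S}$, which is the claim. To fix $\{c_n\}$, first choose $R>0$ so large that $J_P(\delta)\subset D(0;R)$, that $R\ge C$, and that $|P(\z)|\ge 2|\z|$ whenever $|\z|\ge R$ (possible since $\deg P\ge2$); then apply Lemma \ref{convergence} with $\delta_0=\delta$ to obtain $\{c'_n\}$ decreasing to $0$, and put $c_n:=c'_{n-1}/R$ (with $c_0:=c'_0$), a positive sequence decreasing to $0$. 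Note that $\{a_n\}$, being as in Remark \ref{shift}, makes $\om{S}$ a \short{k}, hence open.

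The first step is to record the shift structure of the orbit. Writing $S(n)(z)=\big(\z_n,\pi^{(n)}_2,\dots,\pi^{(n)}_k\big)$, one has $\z_0=z_1$, $\z_n=P(\z_{n-1})+w_{n-1}$ with $w_{n-1}=a_n\pi^{(n-1)}_k$, and $\pi^{(n)}_j=a_n\pi^{(n-1)}_{j-1}$ for $2\le j\le k$. Iterating the last relation, each $\pi^{(n)}_j$ is a product of at most $k-1$ consecutive $a_i$'s times either an earlier $\z_m$ (with $m<n$) or an original coordinate $z_l$; since every $a_i<1$ and $|z_l|<C\le R$ for $l\ge2$, this gives $|\pi^{(n)}_j|<\max\{R,\ \max_{0\le m<n}|\z_m|\}$ for all $n\ge1$, $2\le j\le k$, and shows moreover that $\z_n\to0$ forces $S(n)(z)\to0$.

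Then I would split into two cases. First, suppose $|\z_n|\ge R$ for some $n$, and take $n$ minimal. By the estimate above, every coordinate of $S(n)(z)$ other than the first has modulus $<R\le|\z_n|$, so $S(n)(z)\in V_R^+$; applying $S_{n+1}$ once more, and using $|P(\z_n)|\ge2|\z_n|$ together with $|\pi^{(n)}_k|\le|\z_n|$ (as $S(n)(z)\in V_R^+$), one gets $|\z_{n+1}|>|\z_n|\ge R$ with every other coordinate of $S(n+1)(z)$ strictly smaller in modulus, so $S(n+1)(z)$ lies in the interior of the forward-invariant horn $V_R^+$, on which the first coordinate blows up (as in the preceding lemma). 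Hence this orbit escapes to infinity, and by continuity of the automorphism $S(n+1)$ so do the orbits of a whole neighbourhood of $z$; thus $z\in\mbb C^k\sm\ov{\K{S}}$. Second, suppose $|\z_n|<R$ for every $n$; then the estimate shows that every coordinate of every $S(n)(z)$ has modulus $<R$, so $|w_{n-1}|=a_n|\pi^{(n-1)}_k|<a_nR<c'_{n-1}$, and the recursion $\z_n=P(\z_{n-1})+w_{n-1}$ --- with $\z_0=z_1\notin P^{-1}(J_P(\delta))$ --- satisfies the hypotheses of Lemma \ref{convergence}. So $\z_n\to0$ or $\z_n\to\infty$; the latter is impossible since $|\z_n|<R$, hence $\z_n\to0$ and therefore $S(n)(z)\to0$, i.e.\ $z\in\om{S}$. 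In either case $z\notin\J{S}$, which proves $\J{S}\cap N_C\subset U$.

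The step I expect to be the main obstacle is the escaping branch: one has to be sure that the moment the first coordinate reaches size $R$ the orbit genuinely enters the interior of the forward-invariant horn $V_R^+$ rather than merely having one large coordinate --- which is why $n$ is taken minimal and one pushes one further step to obtain strict inequalities --- and that this escape survives small perturbations of $z$, which is exactly what turns ``$z$ escapes'' into ``$z\in\mbb C^k\sm\ov{\K{S}}$''. The convergence branch is comparatively soft once Lemma \ref{convergence} and the openness of the \short{k} $\om{S}$ are in hand.
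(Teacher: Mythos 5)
Your proof is correct and relies on the same two ingredients as the paper---Lemma \ref{convergence} for the one--variable recursion $\z_n = P(\z_{n-1}) + w_{n-1}$ and the unlabeled escape lemma for the horn $V_R^+$---but it organizes the case analysis differently and, in two places, more carefully. The paper first splits $N_C\sm U$ into its compact and non--compact components and, within the non--compact one, further distinguishes whether some $\pi_i\circ S(n)(z)$ with $i\ge 2$ ever exceeds $R$; you instead split only on whether the first coordinate $\z_n$ ever reaches size $R$, and use the shift estimate $|\pi_j^{(n)}|<\max\{R,\max_{m<n}|\z_m|\}$ to fold everything else into a single application of Lemma \ref{convergence}. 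Two places where your write--up improves on the paper's: the choice $c_n=c'_{n-1}/R$ (which gives $|w_{n-1}|=a_n|\pi_k^{(n-1)}|<a_nR<c'_{n-1}$, exactly the hypothesis of Lemma \ref{convergence}) corrects what appears to be a misplaced inequality in the paper's ``$0<c'_nR<c_n$''; and the extra step pushing $S(n+1)(z)$ strictly into the interior of $V_R^+$, so that a whole neighbourhood of $z$ escapes, makes explicit the openness needed to pass from ``the orbit of $z$ is unbounded'' to ``$z\notin\ov{\K{S}}$''---the paper gets this implicitly from the fact that each component of $N_C\sm U$ is open and escapes or converges in its entirety, but your pointwise argument needs the explicit perturbation step. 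Both routes establish the lemma.
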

\begin{proof}
By Lemma \textbf{\ref{convergence}}, there exists a sequence $\{c_n'\}$. Choose $\{c_n\}$ such that $0<c_n'R<c_n$. 

\medskip\no 
If $z$ is in the compact component of $N_C \setminus U.$ Then by the choice of $c_n$'s it follows that $S_1(z)$ is in the compact component of $N_C \setminus U.$ Further repetitive arguments using Lemma \textbf{\ref{convergence}}, shows that $\pi_1 \circ S(n)(z) \to 0.$ Also, $\pi_i \circ S(n+i)=a_{n+i} \pi_1 \circ S(n)(z)$ for $2 \le i \le k.$ Hence it follows that $S(n)(z) \to 0$ as $n \to \infty.$ 

\medskip\no 
If $z$ is in the non--compact component of $N_C \setminus U$, then there are two cases.

\medskip\no 
\textit{Case 1:}If $|\pi_i \circ S(n)(z)| \le R$ for every $2 \le i \le R$ and $n \ge 0$, then the choice of $a_n$'s and Lemma \textbf{\ref{convergence}} assures that $\pi_1\circ S(n)(z) \to \infty$ as $n \to \infty.$ 

\medskip\no 
\textit{Case 2:} Otherwise suppose $|\pi_{i_0} \circ S(\tilde{n})(z)|>R$ for some $\tilde{n} \ge 1$ and $2 \le i_0 \le k$. Also let $|\pi_{i} \circ S(n)(z)|\le R$ for every $0 \le n < \tilde{n}$ and $2 \le i \le k$. If $i_0>2$, then $|\pi_{i_0-1}S(\tilde{n}-1)|>R$ which contradicts the choice of $\tilde{n}$, i.e., $i_0=2.$ Since, $|\pi_i \circ S(\tilde{n}-1)(z)|<R$ for every $2 \le i \le k$ and $|\pi_1 \circ S(\tilde{n}-1)(z)|>R$ it follows that $S(\tilde{n}-1)(z) \in V_R^+$, i.e., $S(n)(z) \to \infty$ as $n \to \infty.$
\end{proof}
\no The proof of Theorem \textbf{5.1} in \cite{PW} relied on the following idea:

\medskip\no 
\begin{adjustwidth*}{.5cm}{.5cm}
`The Fatou--Bieberbach domains $F(n-1)(\Omega^{a_n})$'s constructed for every $n \ge 0$ were converging to $\Omega_{\{F_n\}}$ in the Hausdorff metric on sufficiently large polydiscs in $\mbb C^2.$'
\end{adjustwidth*}

\medskip\no 
However, the proof of Theorem \textbf{\ref{box_k}} does not use this idea. On the contrary, it involves the convergence of forward Julia sets of a sequence of automorphisms to a standard object whose Hausdorff dimension is predetermined. Let us recall a few definitions and standard notations before proceeding to the result:

\medskip\no 
Let $K$ be a compact subset of some metric space, say $X$. For $\ep>0$ let $\cal{B}_{\ep}$ denote the collection of all coverings of $K$ by balls of radius $\ep$, i.e., 
\[ \cal{B}_{\ep}= \big\{ \{B_i\}: K \subset \cup_i B_i \text{ and } B_i=B(p_i; \ep) \text{ for some } p_i \in X  \big\}.\]
For $h \ge 0$ define
\[ \gamma_h^{\ep}(K)=\ep^h \inf_{\cal{B}_{\ep}} \# \{B_i\} \text { and } \mu_h(K)= \limsup_{\ep \to 0} \gamma_k^\ep(K).\]
$\mu_h(K)$ is called the $h-$upper--box content (or the Minkowski content) of $K.$ The upper--box dimension of $K$ is denoted by $\ov{\text{dim}}_B(K)$ and is defined as the unique value of $h \ge 0$ such that 
\[ \mu_{h'}(K)= \begin{cases} 0 \text{ for every } h'>h \text { and }\\ \infty \text{ for every } h< h'. \end{cases}\]
The upper--box dimension of the subset $K$ is always greater than or equal to the Hausdorff dimension (see \cite{Falconer}). 

\medskip\no 
For two compact sets $A,B \subset \mbb C^k$, the definition of Hausdorff distance between $A$ and $B$ is given by 
\[ d_H(A,B)=\max\{d(A,B), d(B,A)\}\]
where 
\[ d(A,B)=\sup_{x \in A} \inf_{y \in B} d(x,y).\] 
\begin{thm}\label{box_k}
There exists an $\alpha_0>1$ and a \short{k}, say $\Omega$ such that the upper--box dimension is greater than or equal to $2(k-1)+\alpha_0$ at every point in the boundary of $\Omega$. Further, $\Omega$ is obtained as a non--autonomous basin of attraction of a sequence of automorphisms in $\mbb C^k.$
\end{thm}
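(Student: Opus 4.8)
The plan is to construct $\Omega$ explicitly as a non--autonomous basin of attraction of shift--like maps $\seq{S}$ of the form in Remark \textbf{\ref{shift}}, with $P(z)=z^2p(z)$ chosen so that $P$ is a hyperbolic polynomial whose Julia set $J_P$ has Hausdorff (and box) dimension strictly between $1$ and $2$; such polynomials exist, and we may moreover arrange by a theorem of Ruelle/Zdunik type that $\ov{\mathrm{dim}}_B(J_P)=\dim_H(J_P)=\alpha_0$ for some $1<\alpha_0<2$. The coefficients of $p$ must be positive with $p(0)=c_0>0$ so that Theorem \textbf{\ref{poly_short}} (via Remark \textbf{\ref{shift}}) applies and guarantees that $\om{S}$ is a genuine \short{k}. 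First I would fix $\delta>0$ and $R>0$ with $J_P(\delta)\subset D(0;R)$, choose the sequence $\seq{a}$ decreasing fast enough to satisfy simultaneously the hypotheses of Theorem \textbf{\ref{poly_short}} ($0<a_{n+1}<a_n^2<1$, $a_n^{-2^n}\to 0$) and the smallness condition $|a_n|<\min\{|a_{n-1}|^2,c_n\}$ of Lemma \textbf{\ref{convergence of S_n}}, and set $\Omega=\om{S}$.

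The core of the argument is to locate a copy of (a neighbourhood of) $J_P$ inside $\partial\Omega=\J{S}$ near a boundary point, and to transport it around. From Lemma \textbf{\ref{convergence of S_n}} we have $\J{S}\cap N_C\subset U$, where $U=\{z\in N_C: P(z_1)\in J_P(\delta)\}$; conversely I would show that the ``core'' slice $\{(z_1,0,\dots,0): P(z_1)\in J_P\}$, or rather a large piece of $\{(z_1,0,\dots,0):z_1\in J_{P}\}\times\{0\}^{k-1}$ intersected appropriately, actually lies in $\J{S}$. The point is that if $z_1$ is genuinely in $J_P$ (not just a neighbourhood) and the remaining coordinates are $0$, then along the orbit the dynamics in the first coordinate stays bounded but is chaotic on $J_P$, so such points are neither in the interior of $K^+_{\seq{S}}$ nor in its exterior — they are boundary points. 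Hence $\partial\Omega$ contains a set bi-Lipschitz (or at least Hölder, with controlled exponent) to $J_P\times\{0\}^{k-1}$, giving $\ov{\mathrm{dim}}_B(\partial\Omega)\ge \alpha_0$ at points of that slice. To upgrade the lower bound to $2(k-1)+\alpha_0$ I would use that $\partial\Omega$ near such a point actually fibers: the $2(k-1)$ real dimensions coming from the transverse coordinates $(z_2,\dots,z_k)$ (which move freely in a small polydisc while $z_1$ traces $J_P$, since $\partial\Omega$ is locally a ``product-like'' set $J_P\times(\text{open in }\mbb C^{k-1})$ up to biholomorphism) contribute their full Euclidean dimension, and box dimension is at least additive for such product neighbourhoods; so locally $\ov{\mathrm{dim}}_B(\partial\Omega)\ge 2(k-1)+\ov{\mathrm{dim}}_B(J_P)=2(k-1)+\alpha_0$.

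Finally, to get the bound at \emph{every} boundary point and not just near the core slice, I would invoke the self--similar structure of a non--autonomous basin: every boundary point $q\in\partial\Omega$ is of the form $q\in\partial\Omega_n^S$ where $\Omega_n^S=S(n)^{-1}(B^k(0;r))$, and the biholomorphisms $S(n)^{-1}$ (with $S(n)=S_n\circ\cdots\circ S_1$) map a fixed neighbourhood of the core boundary onto a neighbourhood of $q$ in $\partial\Omega$; since box dimension is a biholomorphism (indeed bi-Lipschitz-on-compacts) invariant, the dimension bound propagates from the core slice to all of $\partial\Omega$. The main obstacle I anticipate is the transversal/product step: making precise that, near a core boundary point, $\partial\Omega$ really does look like $J_P$ times a full-dimensional piece of $\mbb C^{k-1}$, and that the upper box dimension of this product is at least the sum — this requires controlling how the $o(z_1^{d+1})$-type higher order terms and the coupling through $a_n z_{k}$ distort the slice structure, and invoking a product inequality $\ov{\mathrm{dim}}_B(A\times B)\ge \ov{\mathrm{dim}}_B(A)+\dim_H(B)$ (valid, e.g., from \cite{Falconer}) with the roles arranged so the side with a clean lower box bound is $J_P$. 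A secondary technical point is checking that the two sets of constraints on $\seq{a}$ (fast decay for Theorem \textbf{\ref{poly_short}} versus the Lemma \textbf{\ref{convergence of S_n}} smallness) are compatible, which they are since both merely ask $a_n$ to go to $0$ sufficiently fast.
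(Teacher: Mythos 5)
Your high-level plan coincides with the paper's at the outer layer: take shift-like maps $S_n$ as in Remark~\ref{shift} with a hyperbolic $P(z)=z^2p(z)$ whose Julia set has dimension $\alpha_0>1$, pick $a_n$ decaying fast enough for both Theorem~\ref{poly_short} and Lemma~\ref{convergence of S_n}, and compare $\partial\Omega$ to the model set $J_P\times D^{k-1}(0;C)$ of dimension $2(k-1)+\alpha_0$. But the step where you locate a dimension-$\alpha_0$ set \emph{inside} $\partial\Omega$ does not go through as written, and this is exactly where the paper's argument does something different. You assert that the slice $J_P\times\{0\}^{k-1}$ lies in $J^+_{\{S_n\}}$ because ``the dynamics in the first coordinate stays bounded but is chaotic on $J_P$.'' The non-autonomous dynamics, however, does not leave this slice invariant: after $k$ steps the first coordinate receives a nonzero perturbation $a_{n}z_k$, and for a hyperbolic $P$ the Julia set is exactly the locus where such perturbations are maximally destabilizing. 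Lemma~\ref{convergence} only controls orbits of points \emph{outside} $P^{-1}(J_P(\delta_0))$; nothing in the paper (nor in your sketch) shows that points of $J_P$ itself survive the perturbation as boundary points, and it is not clear this is even true. A related but deeper issue is your claim that $\partial\Omega$ is ``locally a product-like set $J_P\times(\text{open in }\mathbb C^{k-1})$ up to biholomorphism.'' What Lemma~\ref{convergence of S_n} gives is only containment $J^+_{\{S_n\}}\cap N_C\subset J_P(\delta)\times D^{k-1}(0;C)$, i.e.\ the boundary sits inside a \emph{thickened} product; a set contained in a thickened copy of $\mathcal J$ can have much smaller dimension than $\mathcal J$, so the product inequality $\ov{\mathrm{dim}}_B(A\times B)\geq\ov{\mathrm{dim}}_B(A)+\dim_H(B)$ has nothing to bite on.

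The paper's proof avoids both problems by not trying to exhibit a product inside $\partial\Omega$ at all. Instead it builds the sequence $\{a_n\}$ inductively so that, at stage $i$, (a) the pullback $\mathcal J_i=S(i-1)^{-1}(\mathcal J)$ admits a finite cover $\mathcal B_i$ by balls of radius $2^{-(i+1)}$ on each of which the $h_i$-box content $\gamma_{h_i}^{\hat\varepsilon_i}(\mathcal J_i\cap B)$ exceeds $2^{i+1}$, and (b) the two-sided Hausdorff distance $d_H\bigl(J^+_{\{S_n\}}\cap S(i-1)^{-1}(N_C),\,\mathcal J_i\bigr)<\eta_i$ with $\eta_i<\hat\varepsilon_i-\varepsilon_i$ and $2\varepsilon_i^{h_i}=\hat\varepsilon_i^{h_i}$. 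The punch line is then a direct covering comparison: any cover of $J^+_{\{S_n\}}\cap B^k(z;\varepsilon)$ by $\varepsilon_{n+1}$-balls, inflated to radius $\hat\varepsilon_{n+1}$, must also cover $\mathcal J_{n+1}\cap B_w$ for an appropriate ball $B_w\in\mathcal B_{n+1}$, forcing a large lower bound on the number of balls and hence on $\gamma_{h}^{\varepsilon_{n+1}}$. This sidesteps both the membership question and the need for any local product structure, and it automatically gives the bound at \emph{every} boundary point because every $z\in J^+_{\{S_n\}}$ eventually lies in $S(n)^{-1}(N_C)$. Your bi-Lipschitz-propagation idea runs into the same difficulty: $\partial\Omega$ is not equal to (nor bi-Lipschitz to) $\mathcal J_n$ for any $n$, only Hausdorff-close, and Hausdorff-closeness alone preserves nothing about box dimension, which is why the quantitative $\eta_i<\hat\varepsilon_i-\varepsilon_i$ interplay is the irreplaceable ingredient.
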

\begin{proof}
\no Note that if $a>0$ and $b>0$ are chosen appropriately the polynomial $p(z)=a z^4+b z^3+ z^2$ satisfies the following properties:
\begin{itemize}
\item[(i)] $p(z)$ is a hyperbolic polynomial with a single attracting cycle only at the origin. This is possible since $z^2$ is hyperbolic with only one component and degree $4$ hyperbolic polynomials form an open subset in the space degree $4$ polynomials 

\medskip
\item[(ii)] By Theorem \textbf{4.4.20} in \cite{Uedabook}, it follows that Fatou set of $ p(z)$ has only two connected components, i.e., the component containing the origin and the component containing infinity. Further, the Julia set of $P$ is the boundary of the Fatou component containing the origin.

\medskip
\item[(iii)] The Hausdorff dimension of $J(p)=\alpha_0>1.$ This follows from Theorem \textbf{{1.4.2}} in \cite{Uedabook}.
\end{itemize}

\medskip\no 
Choose $C>0$ sufficiently small and let $N_C$ (as before) be a $C-$neighbourhood of the $z_1-$axis in $\mbb C^k.$ From the proof of Lemma \textbf{\ref{convergence of S_n}}, for some $\delta>0$ there exists a positive sequence $\{a_n(\delta)\}$ such that for $S_n=S_{a_n(\delta)}$, $$J^+_{\{S_{a_n}\}} \cap {N_C} \subset {J_p(\delta)} \times { D^{k-1}(0;C)}.$$ Let $\cal{J}=J_p \times {D^{k-1}(0;C)}.$ Then the Hausdorff dimension of $\cal{J}$ is equal to $2(k-1)+\alpha_0.$ Let $h_n$ be a sequence increasing to $2(k-1)+\alpha_0.$ The final sequence $S_n$ will be constructed inductively. 

\medskip\no 
\textit{Induction hypothesis:} There exist $(i+1)-$constants $\{a_j \in \mbb R^+: 0 \le j \le i\}$ such that $S_j=S_{a_j}$, $0 \le j \le i$ satisfies the following properties:

\begin{itemize}
\item There exists a finite collection of balls $\cal{B}_i$ of radius $2^{-(i+1)}$ covering $\cal{J}_i=S(i-1)^{-1}({\cal{J}})$ such that every element of $\cal{B}_i$ intersect $\cal{J}_i.$ Further, there exists $\hat{\ep}_i>0$ such that $\gamma_{h_i}^{\hat{\ep}_i}(\cal{J}_i \cap B)>2^{i+1}$ for every $B \in \cal{B}_i.$

\medskip\no 
\item Let $0<\eta_i< \hat{\ep}_i-\ep_i$, where $2\ep_i^{h_i}=\hat{\ep}_i^{h_i}.$ There exists a sequence of positive real numbers $\{a^i_{k}\} $ such that the finite collection $\{S_j: 0 \le j \le i\} $ is completed with $S_{i+k}=S_{a_{i+k}}$ for $k \ge 1$ where $a_{i+k} \le \max\{a_{i+k-1}^3,a^i_{k}\}$ then $$d_H(J^+_{\{S_n\}} \cap S(i-1)^{-1}(N_C), \cal{J}_i)< \eta_i.$$ 
\end{itemize}
\textit{Initial step:} When $i=0$, consider a covering of $\cal{J}$ by balls of radius $1/2$ , say $\cal{B}_0$ such that every element of $\cal{B}_0$ intersect $\cal{J}.$ Further, let $\hat{\ep}_0$ be such that $\gamma_{h_0}^{\hat{\ep}_0}(\cal{J} \cap B)>2$ for every $B \in \cal{B}_0.$ Let $2\ep_0^{h_0}=\hat{\ep}_0^{h_0}$ and $0<\eta_0< \hat{\ep}_0-\ep_0.$ Also consider $U_0=J_p(\eta_0) \times D^{k-1}(0;C).$ Then by Lemma \textbf{\ref{convergence of S_n}}, there exists a sequence of positive real numbers $\{a^0_k\}$ such that if $S_k=S_{a_k}$ where $a_k \le \max \{a_{k-1}^3, a^0_k\}$ for every $k \ge 0$ then $\Omega_{\{S_n\}}$ is a \short{k} and $J^+_{\{S_n\}} \cap N_C \subset U_0.$ Let $S_0=S_{a_0}.$

\medskip\no 
\textit{General step:} Suppose the statement is true for some $i \ge 0.$ Consider a covering of $\cal{J}_{i+1}=S(i)^{-1}(\cal{J})$ by balls of radius $2^{i+2}$ , say $\cal{B}_{i+1}$ such that every element of $\cal{B}_{i+1}$ intersects $\cal{J}_{i+1}.$ Further, $\hat{\ep}_{i+1}$ such that $\gamma_{h_{i+1}}^{\hat{\ep}_{i+1}}(\cal{J} \cap B)>2^{i+2}$ for every $B \in \cal{B}_{i+1}.$ Let $2\ep_{i+1}^{h_{i+1}}=\hat{\ep}_{i+1}^{h_{i+1}}$ and $0<\eta_{i+1}< \hat{\ep}_{i+1}-\ep_{i+1}.$ Further, choose $0<\tilde{\eta}_{i+1}< \eta_0$ such that for $z,w \in U_0$
$$\|S(i)^{-1}(z)-S(i)^{-1}(w)\|< \eta_{i+1} \text { whenever } \|z-w\|< \tilde{\eta}_{i+1}.$$ Let $U_{i+1}=J_p(\tilde\eta_{i+1}) \times D^{k-1}(0;C).$ Then by Lemma \textbf{\ref{convergence of S_n}}, there exists a sequence of positive real numbers $\{a^{i+1}_k\}$ such that if $S_k=S_{a_k}$ where $a_k \le \max \{a_{k-1}^3, a^{i+1}_{k-i}\}$ for every $k \ge i+1$ such that the $\Omega_{\{S_n\}}$ is a \short{k} and $S(i)(J^+_{\{S_n\}}) \cap N_C \subset U_{i+1}$, i.e., 
\[d_H(J^+_{\{S_n\}} \cap S(i)^{-1}(N_C), \cal{J}_{i+1})< \eta_{i+1}.\]
Let $S_{i+1}=S_{a_{i+1}}.$

\medskip\no 
Hence it is possible to obtain $\{S_n\} \subset \mathsf{ Aut}_0(\mbb C^k)$ such that for every $n \ge 0$,
\begin{itemize}
\item There exists $\cal{B}_n$ a finite collection of balls of radius $2^{-(n+1)}$ covering $\cal{J}_n=S(n-1)^{-1}({\cal{J}})$ such that every element of $\cal{B}_n$ intersect $\cal{J}_n.$ Further, there exists $\hat{\ep}_n>0$ such that $\gamma_{h_n}^{\hat{\ep}_n}(\cal{J}_i \cap B)>2^{n+1}$ for every $B \in \cal{B}_n.$

\medskip\no 
\item There exists $0<\eta_n< \hat{\ep}_n-\ep_n$, where $2\ep_n^{h_n}=\hat{\ep}_n^{h_n}$ such that 
\[d_H(J^+_{\{S_n\}} \cap S(n-1)^{-1}(N_C), \cal{J}_{n})< \eta_{n}.\]
\end{itemize} 
Let $z \in J^+_{\{S_n\}}.$ Then for sufficiently large $n \ge n_z \ge 0$, $z \in S(n)^{-1}(N_C).$ Choose $\ep>3.2^{-(n+2)}.$ Let $w \in \cal{J}_{n+1}$ such that $z \in B^k(w; \eta_{n+1}).$ By assumption $\cal{B}_{n+1}$ is a covering by $2^{-(n+2)}$ balls of $\cal{J}_{n+1}.$ Let $B_w$ be the ball in $\cal{B}_{n+1}$ that contains $w$, then $B_w \in B^k(z;\ep).$ Consider any arbitrary covering $\{\tilde{B}_j\}$ of $J^+_{\{S_n\}} \cap B^k(z;\ep)$ by balls of radius $\ep_{n+1}.$ Further, let $\{B'_j\}$ represent the collection of balls with same centers as $\tilde{B}_j$ but radius $\hat{\ep}_{n+1}.$ Since $\eta_{n+1}< \hat{\ep}_{n+1}-\ep_{n+1}$ and 
$$d_H(J^+_{\{S_n\}} \cap S(n)^{-1}(N_C), \cal{J}_{n+1})< \eta_{n+1}$$ $\{B'_j\}$ is a covering of $\cal{J}_{n+1} \cap B_w$. Further, $\hat{\ep}_{n+1}^{h_{n+1}} \# \{B'_j\}> 2^{n+2}$ for every $n \ge n_z.$ Now let $h < 2(k-1)+ \alpha_0.$ Then for sufficiently large $n$, $h_n \ge h$ 
\begin{align}\label{box_main}
 \gamma_{h}^{\ep_{n+1}}(J^+_{\{S_n\}} \cap B^k(z;\ep))>\gamma_{h_{n+1}}^{\ep_{n+1}}(J^+_{\{S_n\}} \cap B^k(z;\ep))>2^{n+1}.
 \end{align}
Since (\ref{box_main}) is true for all $n$, sufficiently large it follows that $\mu_h(J^+_{\{S_n\}} \cap B^k(z;\ep))=\infty$, i.e., the box dimension at $z$ is greater than $h.$ Hence the  upper--box dimension of $J^+_{\{S_n\}}$ at every point is greater than or equal to $2(k-1)+ \alpha_0.$ 
\end{proof}
\begin{rem}
By Theorem \textbf{6.1} in \cite{Wolf}, for $\delta>0$ there exists $a_0(\delta)>0$ such that the forward Julia set ($J_a^+$) of the automorphism $$H_a(z_1,z_2)=(a^2 z_2+p(z_1),z_1)$$ has Hausdorff dimension $h_a \in (2+\alpha_0-\delta,2+\alpha_0+\delta)$ whenever $0<|a|<a_0(\delta)$. Since $$S_a=\cal{L}_a \circ H_a \circ \cal{L}_{a^{-1}}=(aw+p(z),az)$$ where $\cal{L}_a(z_1,z_2)=(z_1,a z_2).$ So the Hausdorff dimension of the forward Julia set of $S_a$ is $h_a.$ Let $\Omega^a$ denote the attracting basin of attraction of $S_a$. From \cite{BS2} it follows that $ J_{a}^+=\partial \Omega^a.$ Theorem \textbf{\ref{box_k}} says that 
\[ \ov{\text{dim}}_H (J_{a_n}^+) \to \ov{\text{dim}}_B (J_{\{F_n\}}^+) .\]
\end{rem}
\begin{prop}\label{constant}
Let $\{S_n\} \subset \mathsf{ Aut}_0(\mbb C^k)$ be the sequence as constructed in the proof of Theorem \textbf{\ref{box_k}}. Then $K^+_{\{S_n\}}$ is connected and
\[ K^+_{\{S_n\}} =\overline{\Omega_{\{S_n\}}} \; \text{ and }\; J^+_{\{S_n\}}=\partial \Omega_{\{S_n\}}.\]
\end{prop}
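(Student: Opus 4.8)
The plan is to identify $\K{S}$ with the filled Julia set of the sequence, show it coincides with $\overline{\om{S}}$, and read off the statement about the boundary. Write $\Omega=\om{S}$, $K=\K{S}$, $J=\J{S}$; by Remark \ref{shift} and Theorem \ref{poly_short}, $\Omega$ is a \short{k}, in particular a domain containing the origin. Let $P$ be the hyperbolic polynomial from the proof of Theorem \ref{box_k}, with attracting fixed point $0$, Fatou set $A_P(0)\sqcup A_P(\infty)$, filled Julia set $K_P$, and $J_P=\partial A_P(0)=\partial K_P$, so that the first coordinate of $S_n$ is $z_1\mapsto P(z_1)+a_nz_k$; by item (ii) in the proof of Theorem \ref{box_k}, $K_P=\overline{A_P(0)}$ and $\operatorname{int}K_P=A_P(0)$. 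First I would use the filtration Lemma of Section \textbf{4} (the one giving $S(n)(z)\in V_R\cup V_R^+$ for all $n$, with escape to infinity once the orbit enters $V_R^+\setminus V_R$): it yields $K=\bigcap_{n\ge0}S(n)^{-1}(V_R)$, hence $K$ is closed, $\Omega\subseteq K$, and $\overline\Omega\subseteq K$. For $z\in K$ the identities $\pi_i(S(n)(z))=a_n\pi_{i-1}(S(n-1)(z))$ for $2\le i\le k$, together with the boundedness of the orbit, force $\pi_2(S(n)(z)),\dots,\pi_k(S(n)(z))\to0$, so $S(n)(K)\subset N_C$ for all large $n$; since each $S(n)$ is an automorphism we may pass freely to the shifted sequences $\{S_{m+\cdot}\}$, which by the construction of $\{S_n\}$ in Theorem \ref{box_k} satisfy the hypotheses of Lemma \ref{convergence of S_n} with a parameter $\delta_m\to0$. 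Applying Lemma \ref{convergence} to the perturbed first--coordinate dynamics $w_{n+1}=P(w_n)+a_{n+1}\pi_k(S(n)(z))$ (whose perturbations are at most $a_{n+1}R$), any $z\in K$ whose first--coordinate orbit ever leaves $P^{-1}(J_P(\delta_m))$ must satisfy $w_n\to0$, hence $S(n)(z)\to0$, hence $z\in\Omega$; consequently, for $z\in K\setminus\Omega$ one gets $\operatorname{dist}\big(\pi_1(S(n)(z)),J_P\big)\to0$, with $S(n)(z)$ confined to shrinking neighbourhoods of $J_P\times\{0\}$.

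\medskip\no Next I would prove $\operatorname{int}K=\Omega$. Suppose $z_0\in\operatorname{int}K\setminus\Omega$ and fix $\rho>0$ with $B^k(z_0;\rho)\subseteq K$. By the previous paragraph $w_n:=\pi_1(S(n)(z_0))$ lies eventually in a fixed small neighbourhood of $J_P$ on which $P$ is uniformly expanding; since $0\notin J_P$ the orbit also avoids the critical points of $P$, so $\mu:=\inf_n|P'(w_n)|>0$ and $\prod_{j<n}|P'(w_j)|\to\infty$ by hyperbolicity. Using the triangular form of $S_n$ and the rapid decay $a_{n+1}<a_n^2$ -- splitting off a fixed initial block so that the remaining distortion factor is as close to $1$ as one likes -- one checks that $\big\|D(S(n))(z_0)\big\|\to\infty$; hence $S(n)$ expands some unit direction $u_n$ at $z_0$ unboundedly, and for a suitable coordinate $i_n$ the function $t\mapsto\pi_{i_n}(S(n)(z_0+tu_n))$ is holomorphic on $\{|t|<\rho\}$, bounded there because $S(n)(B^k(z_0;\rho))\subseteq V_R$, yet has derivative at $0$ of modulus tending to $\infty$ -- contradicting the Cauchy estimate. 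Thus $\operatorname{int}K=\Omega$, and since $K$ is closed, $\partial K=K\setminus\operatorname{int}K=K\setminus\Omega=J$.

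\medskip\no It then remains to prove $K=\overline\Omega$; granting this, $\partial\Omega=\overline\Omega\setminus\Omega=K\setminus\Omega=J$, and $K=\overline\Omega$ is connected because $\Omega$ is a domain, which finishes the Proposition. Since $\overline\Omega\subseteq K$, I must show $\Omega$ is dense in $K$, equivalently that $J=K\setminus\operatorname{int}K$ has empty interior relative to $K$. The idea is that near the $z_1$--axis $K$ is modelled on the product filled Julia set $K_P\times D^{k-1}(0;C)$: for $z\in J$ the orbit $S(n)(z)$ is trapped in arbitrarily small neighbourhoods of $J_P\times\{0\}$ as $n\to\infty$, while $\Omega$ meets each such neighbourhood because $J_P=\partial A_P(0)\subseteq\overline{A_P(0)}$ and, $A_P(0)$ being a John domain for the hyperbolic polynomial $P$, every point of $J_P$ is approached by points of $A_P(0)$ lying outside $P^{-1}(J_P(\delta_m))$, which lift via Lemma \ref{convergence} to points of $\Omega$. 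I would make this precise using the Hausdorff convergence, established in the proof of Theorem \ref{box_k}, of the forward Julia sets $S(n-1)(J)\cap N_C$ to $J_p\times D^{k-1}(0;C)$, together with the companion statement (proved by the same method) that $S(n-1)(\Omega)\cap N_C$ exhausts $A_P(0)\times D^{k-1}(0;C)$ as $m\to\infty$; combining the two shows every point of $J$ is a limit of points of $\Omega$.

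\medskip\no The hard part is precisely this last step. Because the sequence is uniformly expanding along $J=\partial K$, the inverse maps $S(n)^{-1}$ are strongly distorting there, so one cannot simply transport a small perturbation at time $n$ back to time $0$; the way around it is to run the density argument in the forward direction, using the convergence of the forward Julia sets to the product model $J_P\times D^{k-1}(0;C)$ from the proof of Theorem \ref{box_k} and the identity $K_P=\overline{A_P(0)}$ for the hyperbolic polynomial $P$, rather than pulling neighbourhoods backwards.
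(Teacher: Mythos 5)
Your high--level strategy (filtration to get $\overline{\Omega}\subset K^+$; forward dynamics near $J_P$ to get density; hyperbolicity to rule out interior outside $\Omega$) is broadly compatible with the paper, but the mechanism you propose diverges in two ways from the one actually used, and the crucial density step is left as a sketch precisely where the paper supplies the concrete argument.

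The paper proves all three conclusions in one stroke by a \emph{circle argument}: for $z_0\in J^+_{\{S_n\}}$, push forward to a time $n_0$ at which $S(n_0)(z_0)$ lies in $N_{C-\tilde\eta_{n_0}}\cap U_{n_0}$, approximate it by a point $\tilde z=(z_1,z')$ with $z_1\in J_P$, and consider the circle $\theta_t=(z_1+r_0e^{it},z')$. Since $z_1\in J_P=\partial A_P(0)=\partial A_P(\infty)$, some $\theta_{t_1}$ falls in the compact component of $N_C\setminus U_{n_0}$ and some $\theta_{t_2}$ in the noncompact one, hence $S(n)S(n_0)^{-1}(\theta_{t_1})\to0$ and $S(n)S(n_0)^{-1}(\theta_{t_2})\to\infty$. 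Pulling back by $S(n_0)^{-1}$ lands both inside $B^k(z_0;\ep)$ because the radius $\tilde\eta_{n_0}$ in the construction of Theorem~\ref{box_k} was \emph{chosen precisely} so that the pull--back of an $\tilde\eta_{n_0}$--ball fits inside an $\eta_{n_0}$--ball at time $0$, and $\eta_{n_0}<\ep/3$ for $n_0$ large. This shows simultaneously that every point of $J^+$ is a limit of points of $\Omega$ and of escaping points, which gives $J^+\subset\partial\Omega$ and, combined with $\overline\Omega\subset K^+$, forces $\operatorname{int}K^+=\Omega$ and $K^+=\overline\Omega$; connectedness then follows since $\Omega$ is a domain.

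Two specific comments on your proposal. First, your separate Cauchy--estimate step to show $\operatorname{int}(K)=\Omega$ via unbounded growth of $\|DS(n)\|$ is a legitimate idea and would work after one estimates the $(1,1)$--entry of the Jacobian product against the $a_n$--corrections, but it is extra machinery: the circle argument already excludes interior points of $K^+\setminus\Omega$ because such a point would have escaping orbits in every neighbourhood, contradicting $\operatorname{int}K^+$. Second, and more importantly, your worry that ``one cannot simply transport a small perturbation at time $n$ back to time $0$ because $S(n)^{-1}$ is strongly distorting'' is exactly the opposite of what happens: the paper \emph{does} pull back, and the distortion is absorbed by the recursive choice of $\tilde\eta_{n_0}\ll\eta_{n_0}$ already built into the induction of Theorem~\ref{box_k}. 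Your proposed substitute -- ``run the density argument in the forward direction'' using Hausdorff convergence of forward Julia sets plus a John--domain property of $A_P(0)$ -- is never made precise and would still ultimately need a controlled way to lift a time--$n_0$ approximation to a time--$0$ one. So while the opening and closing of your argument are sound, the middle is a sketch where the paper's proof has a sharp, elementary device (the circle and the calibrated $\tilde\eta_{n_0}$), and the one concrete new ingredient you add (the Cauchy estimate) is dispensable.
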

\begin{proof}
Choose $z_0 \in \J{S}.$ Since $z_0 \in \K{S}$ and $\eta_n \to 0$ as $n \to \infty$, there exists $n_0 \ge 0$, sufficiently large such that 
\[S(n_0)(z_0) \in N_{C-\tilde{\eta}_{n_0}} \text{ and } \eta_{n_0} < \ep/3. \] 
\textit{Claim: } For $z \in J_p \times D^{k-1}(0;C-\tilde{\eta}_{n_0})$ and $r> \tilde{\eta}_{n_0}$ there exists $\theta_1^z$ and $\theta_2^z$ in $B^k(z; r)$ such that $S(n)S(n_0)^{-1}(\theta_1^z) \to 0$ and $S(n)S(n_0)^{-1}(\theta_2^z) \to \infty$ as $n \to \infty.$

\medskip\no Recall that $U_{n_0}=J_p(\tilde{\eta}_{n_0}) \times D^{k-1}(0;C).$ Hence, $$B^k(z;\tilde{\eta}_{n_0}) \subset U_{n_0} \text{ for }z \in J_p \times D^{k-1}(0;C-\tilde{\eta}_{n_0}).$$ Let $z=(z_1,z')$ where $z_1 \in J_p$ and $z' \in D^{k-1}(0;C-\tilde{\eta}_{n_0}).$ Now for $r>r_0>\tilde{\eta}_{n_0}$ consider the points $\theta_t=(z_1+r_0e^{it},z')$ for $t \in [0,2 \pi].$ Then $\theta_t \in B^k(z;r) \setminus B^k(z;\tilde{\eta}_{n_0})$ for every $t.$ Further, there exists $t_1$ and $t_2$ such that $z_1+r_0e^{it_1}$ lies in the compact component of $\mbb C \setminus J_p(\tilde{\eta}_{n_0}) $ and $z_1+r_0e^{it_2}$ lies in the non--compact component respectively. Thus $\theta_1^z=\theta_{t_1}$ lies in the compact component of $N_C \setminus U_{n_0}$ and $\theta_2^z=\theta_{t_2}$ in the non--compact component. By the property of $\seq{S}$'s, it follows that $S(n)S(n_0)^{-1}(\theta_1^z) \to 0$ and $S(n)S(n_0)^{-1}(\theta_2^z) \to \infty$ as $n \to \infty.$

\medskip\no 
Observe that $S(n_0)(z_0) \in N_{C-\tilde{\eta}_{n_0}} \cap U_{n_0}$, i.e., there exists $\tilde{z} \in J_p \times D^{k-1}(0;C-\tilde{\eta}_{n_0}) $ such that $$\|S(n_0)(z_0)-\tilde{z}\|< \tilde{\eta}_{n_0}.$$ Thus $$\|z_0-S(n_0)^{-1}(\tilde{z})\|<{\eta}_{n_0}$$ and $S(n_0)^{-1}(\tilde{z}) \in B^k(z_0;\ep).$ Also by the choice $\eta_{n_0}$, it follows $B^k(S(n_0)^{-1}(\tilde{z}); \eta_{n_0}) \subset B^k(z_0;\ep).$ Now 
\[\ov{B^k(\tilde{z};\tilde{\eta}_{n_0})} \subset S(n_0)\big(B^k(S(n_0)^{-1}(\tilde{z}); \eta_{n_0})\big),\] i.e., there exists $r> \tilde{\eta}_{n_0}$ such that  
\[B^k(\tilde{z};r) \subset S(n_0)\big(B^k(S(n_0)^{-1}(\tilde{z}); \eta_{n_0})\big).\]
Thus from the above claim, there exist $s_1=S(n_0)^{-1}(\theta_1^{\tilde{z}})$ and $s_2=S(n_0)^{-1}(\theta_2^{\tilde{z}}) \in B^k(z;\ep)$ such that $S(n)(s_1) \to 0$ and $S(n)(s_2) \to \infty$ as $n \to \infty.$ Since this is true for any arbitrary $\ep>0$, it follows that $z \in \partial \om{S}.$ Thus the proof.
\end{proof}
\section{Proof of Results \ref{poly_con_1}--\ref{dense}}
\no In this section, we prove some properties of biholomorphic images of non--autonomous basins of attraction at a fixed point that satisfy the \textit{uniform upper--bound} condition. We assume that the non--autonomous basin of attraction is not all of $\mbb C^k$, as in this case it is enough to show existence of Fatou--Bieberbach domains with these properties. Henceforth, we will assume that the non--autonomous basin of attraction is always a proper subset of $\mbb C^k$. Recall the following result from \cite{PW}. We will also have occasions to use the facts stated in the remarks thereafter.
\begin{thm} \label{poly_con}
Let $K_1,K_2, \hdots,K_m$ be pairwise disjoint polynomially convex compact sets in $\mbb C^k$ whose union is polynomially convex, and assume that $K_1,K_2,\hdots,K_l$ are star--shaped $(l \le m).$ Let $\phi_i \in \mathsf{ Aut}(\mbb C^k)$ be automorphisms for $1 \le i \le l$ so that the sets $K_i'=\phi_i(K_i)$ and the sets $K_{l+1},\hdots,K_m$ are pairwise disjoint and their union is polynomially convex. Let $\ep>0.$ Then there exists an automorphism $\phi \in \mathsf{ Aut}(\mbb C^k)$ so that $\|\phi(z)-\phi_i(z)\|< \ep$ for all $z \in K_i$, $1 \le i \le l$ and $\|\phi(z)-z\|< \ep$ for all $z \in K_j$, $l+1 \le j \le m.$
\end{thm}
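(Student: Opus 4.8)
The plan is to produce $\phi$ from the Anders\'en--Lempert theorem in the form of Forstneri\v{c}--Rosay: if $K\subset\mbb C^k$ is polynomially convex, $U\supset K$ is open, and $\Psi_t:U\to\mbb C^k$ ($t\in[0,1]$) is a $\cal C^1$ isotopy of injective holomorphic maps with $\Psi_0=\mathrm{id}$ and $\Psi_t(K)$ polynomially convex for every $t$, then $\Psi_1$ is a uniform limit on $K$ of automorphisms of $\mbb C^k$. I would apply this with $K=\bigcup_{i=1}^m K_i$: choose pairwise disjoint open neighbourhoods $U_i\supset K_i$ and define $\Psi$ on $U=\bigsqcup_i U_i$ by $\Psi|_{U_i}=\phi_i$ for $i\le l$ and $\Psi|_{U_j}=\mathrm{id}$ for $j>l$. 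After shrinking the $U_i$ the map $\Psi$ is injective, and $\Psi(K)=\bigsqcup_{i\le l}\phi_i(K_i)\sqcup\bigsqcup_{j>l}K_j$ is polynomially convex by hypothesis. Thus the whole statement reduces to joining $\Psi$ to $\mathrm{id}$ by such an isotopy, and the automorphism the theorem yields with error $<\ep$ on $K$ is the desired $\phi$.

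To construct the isotopy I would keep $\Psi_t\equiv\mathrm{id}$ on every $K_j$ ($j>l$) and deform each star--shaped piece $K_i$ ($i\le l$), star--shaped about some $p_i$, in three concatenated phases. Phase one: contract $K_i$ toward $p_i$ along the affine scalings $z\mapsto p_i+s(z-p_i)$, $s\in(0,1]$; since $K_i$ is star--shaped these carry $K_i$ onto a nested family of star--shaped --- hence polynomially convex --- subsets of $K_i$ shrinking into an arbitrarily small ball about $p_i$. Phase three: the same picture pushed forward by $\phi_i$, i.e. the maps $z\mapsto\phi_i(p_i+s(z-p_i))$, whose images $\phi_i\big(p_i+s(K_i-p_i)\big)\subseteq\phi_i(K_i)$ are nested and grow back up to $\phi_i(K_i)$, so that the member at $s=1$ is $\phi_i$ itself. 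Phase two links the two: once $K_i$ has been contracted to a tiny set near $p_i$, transport that tiny set to a tiny set near $\phi_i(p_i)$ by an affine path --- available since $\mathrm{GL}_k(\mbb C)\ltimes\mbb C^k$ is connected, interpolating between the contraction and $D\phi_i(p_i)(\cdot-p_i)+\phi_i(p_i)$ precomposed with it --- followed by a small perturbation isotopy, injective on the tiny set, joining this linear model to $\phi_i$. Concatenating the three phases gives an isotopy of injective holomorphic maps from $\mathrm{id}$ to $\Psi$.

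The step I expect to be the real work --- and the one carrying the weight of the hypotheses --- is keeping $\Psi_t(K)=\bigsqcup_{i\le l}\Psi_t(K_i)\sqcup\bigsqcup_{j>l}K_j$ \emph{polynomially convex}, i.e. the pieces pairwise disjoint, for all $t$. In phases one and three this is automatic once one fixes a separation of the polynomially convex configurations $\bigsqcup K_i$ and $\bigsqcup_{i\le l}\phi_i(K_i)\sqcup\bigsqcup_{j>l}K_j$ into pairwise disjoint polynomially convex neighbourhoods, since the moving sets there are nested inside the respective fixed pieces; this is precisely where the star--shapedness of the $K_i$ is used, as it is what makes the contraction and the $\phi_i$--expansion proceed through polynomially convex sets. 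The delicate part is phase two: after all the $K_i$ ($i\le l$) have been contracted far enough, the tiny sets can be transported simultaneously along generic affine paths that keep them pairwise disjoint and disjoint from the frozen $K_j$'s, and a finite union of polynomially convex sets lying in pairwise disjoint balls is polynomially convex. Making these bounds quantitative and compatible across the phases, and then controlling the accumulated error $\ep$ over the finitely many Anders\'en--Lempert approximations, completes the argument.
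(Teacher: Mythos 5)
A preliminary remark: the paper does not prove this theorem at all — it is quoted verbatim from \cite{PW} — so your proposal can only be measured against the standard argument, and your overall route (realize the map that equals $\phi_i$ near $K_i$ for $i\le l$ and the identity near $K_j$ for $j>l$ as the time--one map of an isotopy of injective holomorphic maps with polynomially convex images of $K=\cup_i K_i$, then apply Anders\'en--Lempert/Forstneri\v{c}--Rosay) is indeed the expected one, as is the contract--transport--expand scheme. However, several of the justifications you give do not hold as stated. First, ``star--shaped, hence polynomially convex'' is false in $\mbb C^k$, $k\ge 2$: the cone over the torus $\{|z_1|=|z_2|=1\}$ with vertex $0$ is compact and star--shaped, yet its polynomial hull contains the closed bidisc. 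What actually saves phase one is that $p_i+s(K_i-p_i)$ is an invertible affine image of the polynomially convex set $K_i$ (hence polynomially convex) and lies inside $K_i$ by star--shapedness, after which the union with the frozen pieces is polynomially convex by Remark \textbf{\ref{properties_poly_con}}(ii); nestedness alone proves nothing. Second, phase three is not ``automatic'' either: there you need that $\phi_i\big(p_i+s(K_i-p_i)\big)$ is itself polynomially convex, and the reason is a fact you never invoke, namely that automorphisms of $\mbb C^k$ preserve polynomial convexity (the polynomial hull coincides with the hull with respect to $\mathcal{O}(\mbb C^k)$, and $f\mapsto f\circ\phi_i^{-1}$ is a bijection of $\mathcal{O}(\mbb C^k)$); combined with nestedness in $K_i'$ and Remark \textbf{\ref{properties_poly_con}}(ii) applied to the hypothesis that $\bigcup_{i\le l}K_i'\cup\bigcup_{j>l}K_j$ is polynomially convex, this is the crux of the expansion phase.

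The genuine gap is phase two. The statement you lean on --- ``a finite union of polynomially convex sets lying in pairwise disjoint balls is polynomially convex'' --- is not a theorem: Kallin constructed three pairwise disjoint closed polydiscs whose union is not polynomially convex, and even for four pairwise disjoint closed balls polynomial convexity of the union is a well--known open problem, so your claim is at best unproven and cannot be cited to keep $\Psi_t(K)$ polynomially convex while the shrunken pieces are transported. This step needs an actual argument, for instance: transport the pieces one at a time, keep each transported piece inside a ball $\ov{B^k(p(t);r)}$ of very small radius about a moving center $p(t)$ chosen so that $\{p(t)\}$ together with the momentarily frozen configuration is polynomially convex (Remark \textbf{\ref{properties_poly_con}}(i)), and then add the small ball by a Kallin--type separation lemma (a polynomial peaking at $p(t)$ and small on the rest separates the two pieces, in the spirit of Remark \textbf{\ref{properties_poly_con}}(iv)). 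In the same phase, the ``small perturbation isotopy'' joining your affine model to $\phi_i$ precomposed with the contraction is left vague; one needs the quantitative statement that, on the contracted set, the map $z\mapsto q_i+s\big(\phi_i(p_i+s^{-1}(z-p_i))-q_i\big)$ is $O(s)$--close to its affine part, so that a straight--line homotopy stays injective and its images stay small and polynomially convex. With these repairs the argument goes through and the $\ep$--bookkeeping is routine, since in the end only one Anders\'en--Lempert approximation is performed.
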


\begin{rem}\label{properties_poly_con}\leavevmode\vspace{-.1\baselineskip}
\begin{enumerate}
\item[(i)] The union of a polynomially convex compact set and a finite set of points is polynomially convex.

\medskip
\item[(ii)] If $K_1 \cup K_2$ is polynomially convex and compact, $K_1 \cap K_2=\emptyset$, and $K_1' \subset K_1$ is polynomially convex and compact then $K_1' \cup K_2$ is polynomially convex.

\medskip
\item[(iii)] A polynomially convex compact set has a neighbourhood basis consisting of polynomially convex compact sets.

\item[(iv)]  The union of two disjoint polynomially convex compact set, that can be separated by two disjoint convex compact sets is polynomially convex.
\end{enumerate}
\end{rem}
\begin{proof}[Proof of Theorem \ref{poly_con_1}]
Since $\seq{S} $ satisfies the condition of Theorem \textbf{\ref{transcendence}}, the sequence $\seq{\delta}$ as in the proof of Theorem \textbf{\ref{transcendence}} gives a convergent series. So let $\ep_n =\sum_{i=0}^n \delta_i$ for every $n \ge 0$ and $\ep=\sum_{i=0}^{\infty} \delta_i.$ Moreover, there exists $0<r_0<1$ such that $$\om{S}=\cup_{i=0}^{\infty}\Om_i^S \text{ where }\Om_i^S=S(i)^{-1}(B^k(0;r_0)).$$ Without loss of generality assume that  there exists $p \in \mbb C^k$ and $R>0$ such that $\ep-$neighbourhood of $K$, i.e., $K_{\ep} \subset B^k(p;R)$ and $\ov{B^k(p;R)} \cap \ov{B^k(0;r_0+\ep)}=\emptyset.$ Let $\bar{B}=\ov{B^k(0;r_0)}.$ Also let $p_0=0.$

\medskip\no 
\textit{Induction hypothesis: }For every $i \ge 0$ there exist $i-$many automorphisms in $\mathsf{ Aut}_0(\mbb C^k)$ such that the following are true:
\begin{align*}
&{\norm{F_j-S_j}}_{\bar{B}} < \delta_j , \\
F(j)(p_j) \subset \bar{B} &\text{ and } F(j)(K) \subset K_{\ep_j} \subset\ov{B(p;R)} \subset \mbb C^k\setminus\bar{B} 
\end{align*} 
for every $0 \le j \le i.$

\medskip\no 
\textit{Initial step: } By Remark \textbf{\ref{properties_poly_con}}(iv) $\bar{B} \cup K$ is polynomially convex. Since, $S_0(\bar{B}) \subset \bar{B}$,  $S_0(\bar{B}) \cup K$ is also polynomially convex. Hence, by Theorem \textbf{\ref{poly_con}}, for $\delta_0$ there exists $\phi \in \mathsf{ Aut}_0(\mbb C^k)$ such that 
\[ {\norm{\phi-S_0}}_{\bar{B}} < \delta_0 \text{ and } {\norm{\phi-\text{Id}}}_K< \delta_0.\]
Let $F_0=\phi$. Note that $\phi(K) \subset K_{\delta_0}$ and $\phi(p_0)\in \bar{B}.$

\medskip\no 
\textit{General step: } Let $\mu_{i+1}=\delta_{i+1}/2.$ By the same reasoning as before $\bar{B} \cup F(i)(K)$ is polynomially convex and $S_{i+1}(\bar{B}) \cup F(i)(K)$ is polynomially convex. Hence, by Theorem \textbf{\ref{poly_con}} there exists $\phi \in \mathsf{ Aut}_0(\mbb C^k)$ such that 
\[ {\norm{\phi-S_{i+1}}}_{\bar{B}} < \mu_{i+1} \text{ and } {\norm{\phi-\text{Id}}}_{F(i)(K)}< \mu_{i+1}.\]
From (\ref{inverse 1a}) in the proof of Theorem \textbf{\ref{transcendence}}, 
$\phi(\bar{B}) \subset \bar{B}.$
Hence, $\phi\circ F(i)(p_j) \subset \bar{B}$ for every $0 \le j \le i.$ Now if $F(i)(p_{i+1}) \in \phi^{-1}(\bar{B})$, then consider $F_{i+1}=\phi.$ 

\medskip\no 
Otherwise, if $F(i)(p_{i+1}) \notin \phi^{-1}(\bar{B})$, i.e., $F(i)(p_{i+1}) \notin \bar{B}.$ From Remark \textbf{\ref{properties_poly_con}}(i), $\bar{B} \cup F(i)(K) \cup F(i)(p_{i+1})$ is polynomially convex.  Let $\tau_{i+1} \in \phi^{-1}(\bar{B})\setminus (F(i)(K)\cup \bar{B})$, then $\bar{B} \cup F(i)(K) \cup \tau_{i+1}$ is also polynomially convex. There exists $1>\rho>0$ such that for $z,w \in (\bar{B} \cup F(i)(K))_1$, i.e., on a radius $1-$ neighbourhood of $\bar{B} \cup F(i)(K)$, $$\norm{\phi(z)-\phi(w)}< \mu_{i+1}\text{ whenever }\norm{z-w}< \rho.$$ Hence, by Theorem \textbf{\ref{poly_con}} there exists $\psi \in \mathsf{ Aut}_{0}(\mbb C^k)$ such that 
$${\norm{\psi-\text{Id}}}_{\bar{B} \cup F(i)(K)}< \rho \text{ and } \psi(F(i)(p_{i+1})) =\tau_{i+1}\in \phi^{-1}(\bar{B}).$$
Consider $F_{i+1}=\phi \circ \psi.$ From the construction $F(i+1)(p_{i+1}) \in \bar{B}.$ 
For $z \in \bar{B}$, then $\norm{\psi(z)-z}< \rho$ and $\psi(z) \in \ov{B(0;1+r_0)}.$ Thus by continuity of $\phi$
\[ \|\phi\circ \psi(z)-\phi(z)\|< \mu_{i+1} \text { and } \|\phi(z)-S_{i+1}(z)\|< \mu_{i+1},\] i.e.,
\[ \|F_{i+1}(z)-S_{i+1}(z)\|< \delta_{i+1}.\]
Similar arguments for $z \in F(i)(K)$ gives
\[ \|F_{i+1}(z)-z\|< \delta_{i+1},\] i.e., $F(i+1)(K) \subset (K_{\ep_i})_{\delta_{i+1}}=K_{\ep_{i+1}}.$
Hence the induction statement is true for $i+1.$

\medskip\no 
Now by Theorem\textbf{\ref{transcendence}}, $\om{F}$ is biholomorphic to $\om{S}$. Also $\{p_j\} \subset \om{F}$ and $K \cap \om{F}=\emptyset.$
\end{proof}
\begin{proof}[Proof of Corollary \ref{compact}]
Without loss of generality consider $p=0$ and $K$ sufficiently away from the origin. Let $\{p_j\}$ be a dense sequence in $\mbb C^k \setminus K$. Then by Theorem \textbf{\ref{poly_con_1}}, there exists a sequence of automorphisms  $\seq{F} \in \mathsf{ Aut_0}(\mbb C^k)$ such that $\om{F}$ is biholomorphic to $\om{S}$ and $\{p_j\} \subset \om{F}$ and $\om{F} \cap K=\emptyset.$ But $\om{F}$ is open and hence the proof.
\end{proof}
\begin{cor}
Given a sequence of automorphisms  $\seq{S} \in \mathsf{ Aut_0}(\mbb C^k)$ that satisfy the uniform upper--bound condition at the origin, there exists a biholomorphism of $\om{S}$ (say $\Phi$), such that the $2k-$dimensional Hausdorff measure of $\partial \Phi(\om{S})$ is non--zero.
\end{cor}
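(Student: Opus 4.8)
The plan is to deduce this corollary directly from Theorem \ref{box_k} together with the push-out result Theorem \ref{transcendence}. First I would invoke Theorem \ref{box_k} to fix a sequence $\seq{S'} \subset \mathsf{Aut}_0(\mbb C^k)$ (with linear parts of the required contracting type, hence satisfying the \emph{uniform upper--bound} condition) whose non--autonomous basin $\Om=\om{S'}$ is a \short{k} with the property that the upper--box dimension of $\partial\Om = \J{S'}$ equals $2k$ at every boundary point; by Proposition \ref{constant} we also know $\K{S'}=\overline{\Om}$ and $\J{S'}=\partial\Om$. The point is that box dimension $2k$ of a set in $\mbb C^k=\mbb R^{2k}$ forces the $2k$--dimensional (Lebesgue) measure of $\partial\Om$ to be positive: more precisely, the argument in Theorem \ref{box_k} actually shows $\mu_h(\J{S'}\cap B^k(z;\ep))=\infty$ for every $h<2k$, and by a standard Minkowski-content estimate this yields $\mathcal{L}^{2k}\big(\overline{\J{S'}\cap B^k(z;\ep)}\big)>0$, so in particular $\mathcal{H}^{2k}(\partial\Om)>0$.

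Next I would transfer this to an arbitrary given sequence $\seq{S}$ satisfying the \emph{uniform upper--bound} condition. Since both $\seq{S}$ and $\seq{S'}$ satisfy this condition, I want a biholomorphism $\om{S}\cong\om{S'}$ so that the conclusion passes over. Here I would apply Theorem \ref{transcendence} with the roles arranged so that $\seq{S'}$ plays the part of the ``target'' sequence satisfying the uniform upper--bound condition and $\seq{S}$ is perturbed into it — but that is backwards, since $\seq{S}$ is given and $\seq{S'}$ is what we constructed. Instead, the cleaner route is: Theorem \ref{transcendence} shows that the biholomorphism type of $\om{F}$ depends only on the germ of $\seq{F}$ near $0$ up to small perturbation, and in fact one can run the whole argument of Theorem \ref{box_k} starting from perturbations of the \emph{given} $\seq{S}$: that is, apply Theorem \ref{transcendence} to find a sequence $\seq{F}$, with $\|F_n-S_n\|<\delta_n$ on a small ball, such that $\om{F}\cong\om{S}$, while simultaneously arranging (exactly as in the inductive construction of Theorem \ref{box_k}, which only perturbs each $S_n$ within the allowed tolerance $\delta_n$) that $\partial\om{F}$ has box dimension $2k$ and hence positive $\mathcal{H}^{2k}$ measure. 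Then $\Phi:\om{S}\to\mbb C^k$ — wait, the statement asks for a biholomorphism $\Phi$ of $\om{S}$; composing the biholomorphism $\om{S}\cong\om{F}$ from Theorem \ref{transcendence} with the identity gives $\Phi$ with $\Phi(\om{S})=\om{F}$, whose boundary has positive $2k$--dimensional Hausdorff measure.

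The main obstacle, which I would treat carefully, is the compatibility of the two perturbation schemes: Theorem \ref{box_k} builds its sequence by an inductive choice where each $a_{n+k}$ must be smaller than certain quantities $a^i_k$ coming from Lemma \ref{convergence of S_n}, while Theorem \ref{transcendence} requires $\|F_n-S_n\|<\delta_n$ with $\delta_n$ depending on $\seq{S}$. One must check that the family $\{S_a : 0<a<a_0\}$ of shift--like maps of Remark \ref{shift} can be chosen $\delta_n$--close to the given $S_n$; this is \emph{not} automatic, since a general $\seq{S}$ need not be close to any shift--like sequence. The correct fix is to use Remark \ref{remark to transcendence}: shrink the ball radius $\tilde r$ on which closeness is required, and note that near $0$ every automorphism in $\mathsf{Aut}_0(\mbb C^k)$ with suitable linear part is $C^0$--close on a small ball to \emph{some} map of the form $S_a$ after an initial linear conjugation — so after conjugating $\seq{S}$ by a fixed automorphism (which does not change whether $\partial\Phi(\om{S})$ has positive measure), one can interpolate. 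I expect the bookkeeping here — making the two sequences of tolerances consistent on a common small ball — to be the only real work; the measure-theoretic step (box dimension $2k$ $\Rightarrow$ positive $\mathcal{H}^{2k}$) is standard and the rest is assembly of results already in the paper.
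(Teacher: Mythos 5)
Your approach is a genuinely different route from the paper's, and it does not work. The paper's proof is a one--paragraph deduction from Corollary \ref{compact}: take $K=\ov{D^k}$ where $D\subset\subset\mbb C$ is a simply connected domain with $\partial D$ of positive two--dimensional measure, so $\partial K$ has positive $2k$--dimensional Hausdorff measure; Corollary \ref{compact} gives $\Phi$ with $\Phi(\om{S})$ open, dense in $\mbb C^k\setminus K$, and disjoint from $K$, whence $\partial K\subset\partial\Phi(\om{S})$ and the measure bound transfers. The positive measure comes from the ambient compact set being pushed onto the boundary, not from any dimension estimate on a Julia set.

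Your plan has three gaps, the first two of which are fatal. First, Theorem \ref{box_k} does \emph{not} produce a boundary of upper--box dimension $2k$; it gives upper--box dimension at least $2(k-1)+\alpha_0$, where $\alpha_0$ is the Hausdorff dimension of the Julia set of a \emph{hyperbolic} degree--$4$ polynomial, so $1<\alpha_0<2$ and $2(k-1)+\alpha_0<2k$. The paper's own phrasing in the introduction is "upper box--dimension of the boundary is strictly greater than $2k-1$", not $2k$. Second, even if upper--box dimension $2k$ were available, that does not imply positive $2k$--dimensional Hausdorff (equivalently Lebesgue) measure: a compact subset of $\mbb R^{2k}$ of Lebesgue measure zero can have upper--box dimension $2k$. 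Showing $\mu_h=\infty$ for all $h<2k$ is exactly the statement that the box dimension is at least $2k$; it says nothing about $\mu_{2k}$ being positive, which is the quantity comparable to the Lebesgue measure of the closure, so the "standard Minkowski--content estimate" you invoke does not exist. Third, the compatibility fix — that a generic $\seq{S}$ becomes $\delta_n$--close on a small ball to some shift--like $S_a$ after a linear conjugation — fails: near the origin $S_a$ is close to a specific contracting shift matrix, and a general $DS_n(0)$ satisfying the uniform upper--bound condition need not be linearly conjugate to one of these, so no amount of shrinking $\tilde r$ (Remark \ref{remark to transcendence}) reconciles the two families. You should instead build the proof on Corollary \ref{compact}, which is precisely engineered so that one can place a prescribed fat compact set on the boundary of a biholomorphic image of $\om{S}$.
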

\begin{proof}
Let $D=\bar{(D)}^{\mathrm{o}} \subset \subset \mbb C$ be a simply connected domain in $\mbb C$ such that $\partial D$ has non--zero two dimensional Hausdorff measure. Then $K=\ov{D^k}=\ov{D\times \cdots \times D} \subset \mbb C^k$ is a polynomially convex compact set with non--zero $2k-$dimensional Hausdorff measure. By Corollary \textbf{\ref{compact}}, the result follows. 
\end{proof}
\begin{proof}[Proof of Corollary \ref{Runge}]
From Corollary \textbf{\ref{compact}}, for any given sequence $\seq{S}$ there exists $\Phi_1(\om{S}) \subset \mbb C^* \times \mbb C^{k-1}.$ From Theorem \textbf{\ref{Runge_pre}}, there exists $\Phi_2 \in \mathsf{ Aut}(\mbb C^* \times \mbb C^{k-1})$ such that $Y \subset \Phi_2^{-1} \circ \Phi_1(\om{S}). $ Let $\Phi=\Phi_2^{-1} \circ \Phi_1.$ Then $\Phi(\om{S})$ is not Runge.
\end{proof}
%
\begin{proof}[Proof of Theorem \ref{dense}]
Choose $\ep_n \to 0$ as $n \to \infty.$ 

\medskip\no \textit{Case 1:} When $m=\infty.$ 

\medskip\no \textit{Induction hypothesis: } For every $i \ge 0$, there exist
\begin{itemize}
\item $(i+1)-$automorphisms $\{F_j \in \mathsf{ Aut}(\mbb C^k): 0 \le j \le i\}$,
\item Two set of distinct points $P^i=\{p_j \in \mbb C^k: 0 \le j \le i\}$ and $Q^i=\{q_j \in \mbb C^k: 0 \le j \le i\}$,
\item A set of positive numbers $\Gamma^i=\{\rho_j \in \mbb R^+: 0 \le j \le i\}$,
\end{itemize}   
with the following properties:
\begin{itemize}
\item[(i)] $\ov{B^k(q_j; \rho_j)} \cap \ov{B^k(q_k; \rho_k)}=\emptyset$ for $0 \le j \neq k \le i.$ 
\item[(ii)] $F(i)(p_j)=q_j$ for all $0 \le j \le i.$
\item[(iii)] $F_j(q_k)=q_k$ for all $0 \le k \le j$ and $0 \le j \le i$
\item[(iv)]  For every $z \in \ov{B^k(q_k ; \rho_k)}$ $$\| F_j(z)-S_j(z-q_k)-q_k\|<\delta_j(\rho_k)$$ whenever $0 \le k \le j$ and $0 \le j \le i.$ Here $\delta_j(\rho_k)$ is as observed in Remark \textbf{\ref{remark to transcendence}}.
\item [(v)] $B^i=\cup_{j=0}^i \ov{B^k(q_j ; \rho_j)}$ is polynomially convex.
\item[(vi)] For $t \in B^k(0;i) \setminus F(i)^{-1}(B^i)$ and for every $j$, $0 \le j \le i$
\[ \text{dist}\Big(t, F(i)^{-1}\big(\ov{B^k(q_j; \rho_j)}\big)\Big)< \ep_i.\]
\item[(vii)] $P^j \subset P^{j+1}$, $Q^j \subset Q^{j+1}$ and $\Gamma^j \subset \Gamma^{j+1}$ where $0 \le j \le i-1.$
\end{itemize}

\medskip\no 
\textit{Initial step: }Let $p_0=q_0=0$ be the origin, $F_0=S_0$ and $\rho_0=r$, as in Theorem \textbf{\ref{transcendence}}. Since $i=0$, all the conditions are true.

\medskip\no \textit{General step: }Suppose all the assumptions are true for some $i \ge 0.$ Let $K_i=F(i)^{-1}(B^i)$, i.e., $K_i$ is polynomially convex. For every $0 \le j \le i+1$ consider a set of points $$T^j=\{t^j_{l}: 1 \le l \le m_j\} \subset B^k(0;i+1)\setminus int(K_i)$$ for some $m_j \ge 1$, such that $T^j \cap T^k=\emptyset $ whenever $0 \le j \neq k \le i+1.$ Also for each $j$, $0 \le j \le i+1$  
$$\text{dist}(t, T^j)< \ep_{i+1} \text{ whenever }t \in B^k(0;i+1)\setminus int(K_i).$$ Let $\mathscr{T}^{i+1}=\cup_{j=0}^{i+1} T^j$ and $\uptau^j=F(i)(T^j).$
Choose a point $p_{i+1} \in \mbb C^k \setminus (\mathscr{T}^{i+1} \cup K_i) $ and let $q_{i+1}=F(i)(p_{i+1}).$ Now there exists $\rho_{i+1}>0$ such that
\[ \ov{B^k(q_{i+1}; \rho_{i+1})} \cap \Big({\textstyle \bigcup\limits_{j=0}^{\infty}} \uptau^j \cap B^i\Big)=\emptyset.\] Further, from Remark \textbf{\ref{properties_poly_con}}(iii) we have the following:
\begin{itemize}
\item By appropriately modifying $\rho_{i+1}$ we have that $\ov{B^k(q_{i+1}; \rho_{i+1})} \cup B^i$ is polynomially convex.

\medskip\no 
\item There exists $\mu>0$, $\bigcup_{j=0}^{i+1}\ov{B^k(q_j;\rho_j+\mu)}$ is polynomially convex and $\ov{B^k(q_j;\rho_j+\mu)} \cap \ov{B^k(q_k;\rho_k+\mu)}=\emptyset$ whenever $0 \le j\neq k \le i+1.$

\medskip\no 
\item Let $\psi_j(z)=S_{i+1}(z-q_j)+q_j.$ Then $\bigcup_{j=0}^{i+1}\psi_j\big(\ov{B^k(q_j;\rho_j+\mu)}\big)$ is polynomially convex
\end{itemize}  
 By Theorem \textbf{\ref{poly_con}} there exists $\phi \in \mathsf{ Aut}(\mbb C^k)$ such that for every $0 \le j \le i+1$
 \[\|\phi(z)-\psi_j(z)\|< \mu_{i+1}\]
where $\mu_{i+1}=\min\{\mu,\delta_{i+1}(\rho_j)/2: 0 \le j \le i+1\}$ and $\phi(q_j)=q_j.$ By continuity of $\phi$, there exists $\tilde{\mu}_{i+1}< \mu_{i+1}$ such that on $\bigcup_{j=0}^{i+1}\psi_j\big(\ov{B^k(q_j;\rho_j+\mu)}\big)$
\begin{align}\label{continuity}
\|\phi(z)-\phi(w)\|< \mu_{i+1} \text{ whenever } \|z-w\|< \tilde\mu_{i+1}.
\end{align}
 Again, by Theorem \textbf{\ref{poly_con}} there exists $\psi \in \mathsf{ Aut}(\mbb C^k)$ such that 
\[ \|\psi(z)-z\|< \tilde\mu_{i+1}\] on each $\ov{B^k(q_j,\rho_j)}$ and $\psi(\uptau^j) \subset \phi^{-1}({B^k(q_j,\rho_j)})$ for every $0 \le j \le i+1.$ Further, $\psi(q_j)=q_j.$ Let $F_{i+1}=\phi \circ \psi.$

\medskip\no 
Clearly, the collection $\{F_j: 0 \le j \le i+1\}$ satisfies all the properties (i)--(iii), (v) and (vii). Let $z \in \ov{({B^k(q_j,\rho_j)})}$, then $\psi(z) \in \ov{B^k(q_j,\rho_j+\mu)}.$ From (\ref{continuity})
\[ \|F_{i+1}(z)-\phi(z)\|< \mu_{i+1}, \text{ i.e., } \|F_{i+1}(z)-\psi_j(z)\|<\delta_{i+1}(\rho_j)\] for every $0 \le j \le i+1.$ Hence property (iv) is true.

\medskip\no Also, $F(i+1)(T^j) \subset B^k(q_j,\rho_j)$ for every $0 \le j \le i+1$ and by choice of $T_j$'s property (vi) is also satisfied.

\medskip\no Let $\seq{S^i}$ denote the sequence $S^i_n=S_{i+n}$ for every $i \ge 0.$ Now from the sequence $\seq{F}$ obtained the non--autonomous basin of attraction at every point $q_i$, i.e., $\om{F^i}\cong \om{S^i}$ for $i \ge 0.$ Since $\om{S^i}=S(i)(\om{S})$, it follows that ${\om{F^i}} \cong \om{S}.$ Now by construction ${\om{F^i}} \cap {\om{F^j}}=\emptyset$ for $i \neq j.$ Also for any given $\ep>0$, there exists $n_0 \ge 0$ such that $\ep_{n_0}< \ep$, hence for every $i \ge 0$ and $t \notin \mbb C^k \setminus {\cup_{i=0}^{\infty}} {\om{F^i}}$
\[ \text{dist} (t,\partial {\om{F^i}})< \ep.\] Thus $t \in \partial  {\om{F^i}}$ for every $i \ge 0.$

\medskip\no 
\textit{Case 2:} When $m < \infty.$

\medskip\no For $p_{m+i}=q_m$ for every $i \ge 1$ and follow the same procedure as for the infinite case.
\end{proof}
\section{Proof of Theorem \ref{hausdorff}}
\no In this section we use Theorem \textbf{\ref{transcendence}} to prove that there exists  biholomorphic images of non--autonomous basins of attraction at a point satisfying the \textit{uniform upper--bound }condition with completely chaotic boundary. The technique is adapted from Theorem \textbf{1.1} from \cite{PW}.
\begin{proof}[Proof of Theorem \ref{hausdorff}]
Let $D=int(\bar{D})$ be a simply connected domain in $\mbb C$ such that the Hausdorff dimension of $\partial D$ is $2.$ Let $K=D^k=D \times D \cdots \times D$, then the Hausdorff dimension of $\partial K$ is $2k.$ Also for any $p \in \mbb C^K$ and $\ep>0$ there exists an appropriate affine transformation $\phi_{p,\ep}$ such that $p \in \phi_{p,\ep}(K) \subset B^k(p;\ep).$ Let $K(p;\ep)=\phi_{p,\ep}(K).$ Let $r>0$ and $\seq{\delta}$ be as obtained in Theorem \textbf{\ref{transcendence}}. Further, let
\[ \tilde{\delta}_n=\sum_{j=n}^{\infty} \delta_j.\] Choose $\ep_n \to 0$ as $n \to \infty$.

\medskip\no 
\textit{Induction hypothesis: }For every $i \ge 0$, there exist
\begin{itemize}
\item $(i+1)-$automorphisms $\{F_j \in \mathsf{ Aut}_0(\mbb C^k): 0 \le j \le i\}$,
\item Three set of distinct points $P^i=\{p_j^i \in \mbb C^k: 0 \le j \le n(i)\}$, $Q^i=\{q_j: 0 \le j \le i\}$ and $T^i=\{t_j^i \in \mbb C^k: 0 \le j \le m(i)\}$, where $m(i),n(i)>0$ for every $i \ge 0$
\item Two set of positive numbers $\Gamma^i=\{\rho_j \in \mbb R^+: 0 \le j \le i\}$ and $R^i=\{R_j \in \mbb R^+: 0 \le j \le i\}$

\end{itemize}   
with the following properties:
\begin{itemize}
\item[(i)] $\|F_i-S_i\|< \delta_i$ on $\bar{B}$.
\item[(ii)] $B^k(0;i+1)\setminus F(i)(\bar{B}) \neq \phi.$
\item[(iii)] $F(i)(T^i) \in \bar{B}.$
\item[(iv)] $K^i=F(i-1)^{-1}\big(\cup_{j=0}^{n(i)} K(p_j^i ; \rho_j) \big) \cup K^{i-1}$ is polynomially convex. 
\item [(v)] For every $p \in B^k(0;i+1)\setminus int(K^i)$, $\text{dist}(p,T^i)< \ep_i.$
\item[(vi)] For every $p \in B^k(0;i+1) \setminus F(i)^{-1}(\bar{B})$, $\text{dist}(p,K^i)< \ep_i.$
\item[(vii)] $\bar{B} \cap \ov{B^k(q_i;R_i)}=\emptyset$, $\text{dist}(\bar{B},\ov{B^k(q_i;R_i)})> \tilde{\delta}_i$ and $B^k(q_j;R_j) \subset B^k(q_{j+1}; R_{j+1})$ for every $0 \le j \le i-1.$ 
\item[(viii)]$R^j \subset R^{j+1}$, $Q^j \subset Q^{j+1}$ and $\Gamma^j \subset \Gamma^{j+1}$ where $0 \le j \le i-1.$
\item[(ix)] $F(i)(K^i) \subset B^k(q_i;R_i).$
\end{itemize}

\medskip\no 
\textit{Initial step: } Let $P^0=\{p_j^0 \in {B^k(0;1)} \setminus \bar{B}: 1 \le j \le n(0)\}$ for some $n(0) \ge 1$ such that for any point in $$p \in \ov{B^k(0;1)} \setminus B,\;\text{dist}(p,P^0)< \ep_0.$$ Further, from Remark \textbf{\ref{properties_poly_con}} there exists $\rho_0>0$ such that the following are true:
\begin{enumerate}
\item[(a)] $\bar{B} \cup \{\ov{B^k(p_j^0, \rho_0)}: 1 \le j \le n(0)\}$ is polynomially convex.
\item[(b)] $\ov{B^k(p_j^0, \rho_0)} \cap \ov{B^k(p_l^0, \rho_0)} \neq \emptyset$ for $1 \le j \neq l \le n(0)$ and
\[ B^0=\bigcup_{j=1}^{n(0)} \ov{B^k(p_j^0, \rho_0)} \text{ is polynomially convex.}\]
\end{enumerate}
Let $$K^0= \bigcup_{j=1}^{n(0)} K(p_j^0, \rho_0),$$ then again from Remark \textbf{\ref{properties_poly_con}}, it follows that $K^0$ and $\bar{B} \cup K^0$ is polynomially convex. Let $T^0=\{t_j^0 \in B^k(0;1) \setminus K^0: 1\le j \le m(0)\}$ for some $m(0)\ge 1$ be a collection of points such that for every $$p \in \ov{B^k(0;1)} \setminus int(K^0), \;\;\text{dist}(p,T^0)< \ep_0.$$ Choose $q_0 \in \mbb C$ and $R_0>0$, sufficiently large such that $\text{dist}(\bar{B},\ov{B^k(q_0;R_0)})> \tilde{\delta}_0.$ Since, $\bar{B} \cup \ov{B^k(q_0;R_0)}$ is polynomially convex and $S_0(\bar{B}) \subset B$, from Remark \textbf{\ref{properties_poly_con}}(ii) it follows that $S_0(\bar{B}) \cup \ov{B^k(q_0;R_0)}$ are polynomially convex. Further, let $0<\mu_0<\delta_0/2$ be chosen appropriately such that
\[\|S_0(z)-S_0(w)\|< \delta_0/2 \text{ whenever }\|z-w\|< \mu_0\] for every $z \in \ov{B^k(0;1+\delta_0)}.$ Hence from Theorem \textbf{\ref{poly_con}}, there exists $\phi_1, \phi_2, \phi_3 \in \mathsf{ Aut}_0(\mbb C^k)$ such that  
\begin{align}\label{hausdorff-1}
\|\phi_1-S_0\|_{\bar{B}_{\delta_0}}< \delta_0/2  \text{ and } \|\phi_1-\text{Id}\|_{\ov{B^k(q_0;R_0)}}<\delta_0/2,
\end{align}
\begin{align}\label{hausdorff-2}
\|\phi_2-\text{Id}\|_{\bar{B}}< \mu_0 /2 \text{ and } \phi_2(B^0) \in B^k(q_0;R_0-\delta_0)\; \text{and }
\end{align}  
\begin{align}\label{hausdorff-3}
\|\phi_3-\text{Id}\|_{\phi_2(\bar{B}) \cup \phi_2(K^0)}< \mu_0 /2 \text{ and } \phi_3 \circ \phi_2(T^0) \in \phi_1^{-1}(B).
\end{align} 
\textit{Claim: } $F_0=\phi_1 \circ \phi_3 \circ \phi_2$ satisfies the induction hypothesis for $i=0.$

\medskip\no Note that by choice $F_0$ satisfies properties (ii)--(v) and (vii)--(ix). Let $z \in \bar{B}$
\[ \|\phi_3\circ \phi_2(z)-z\| \le \|\phi_3 \circ \phi_2(z)-\phi_2(z)\|+\|\phi_2(z)-z\| < \mu_0.\] Also $\phi_3 \circ \phi_2(\bar{B}) \subset B_{\delta_0}$ and from the choice of $\mu_0$ it follows that 
\[ \|S_0 \circ \phi_3\circ \phi_2(z)-S_0(z)\| \le \delta_0/2.\] Thus
\[ \|\phi_1\circ \phi_3\circ \phi_2(z)-S_0\circ \phi_3\circ \phi_2(z)\|< \delta_0/2.\] Hence $\|F_0-S_0\|_{\bar{B}}< \delta_0$, i.e., (i) is true. Also from relation (\ref{inverse 1a}) in the proof of Theorem \textbf{\ref{transcendence}}, $ \bar{B} \subset F_0^{-1}(\bar{B}) $, i.e., (vi) is also true.

\medskip\no 
\textit{Induction step: }Suppose the conditions are true for some $i \ge 0.$  Let 
$$\tilde{P}^{i+1}=\{\tilde{p}_j^{i+1} \in {B^k(0;{i+2})} \setminus \big(K^i \cup F(i)^{-1}(\bar{B})\big): 1 \le j \le n(i+1)\}$$ 
for some $n(i+1) \ge 1$ such that for any point in $$p \in \ov{B^k(0;i+2)} \setminus \big(int(K^i) \cup F(i)^{-1}(B)\big),\;\text{dist}(p,\tilde{P}^{i+1})< \ep_{i+1}.$$ Let $P^{i+1}=F(i)(\tilde{P}^{i+1})$ and $p^{i+1}_j=F(i)(\tilde{p}^{i+1}_j)$ for $1 \le j \le n(i+1).$ Further, from Remark \textbf{\ref{properties_poly_con}} there exists $\rho_{i+1}>0$ such that the following are true:
\begin{enumerate}
\item[(a)] $\bar{B} \cup F(i)(K^i) \cup \{\ov{B^k(p_j^{i+1}, \rho_{i+1})}: 1 \le j \le n(i+1)\}$ is polynomially convex.
\item[(b)] $\ov{B^k(p_j^{i+1}, \rho_{i+1})} \cap \ov{B^k(p_l^{i+1}, \rho_{i+1})} \neq \emptyset$ for $1 \le j \neq l \le n(i+1)$ and
\[ B^{i+1}= F(i)(K^i)\cup \Big( \bigcup_{j=1}^{n(i+1)} \ov{B^k(p_j^{i+1}, \rho_{i+1})}\Big) \text{ is polynomially convex.}\]
\end{enumerate}
Let $$K^{i+1}= K^i\cup F(i)^{-1} \Big(\bigcup_{j=1}^{n(i+1)} K(p_j^{i+1}, \rho_{i+1})\Big),$$ then again from Remark \textbf{\ref{properties_poly_con}}, it follows that $K^{i+1}$ and $F(i)^{-1}(\bar{B}) \cup K^{i+1}$ is polynomially convex. Let $T^{i+1}=\{t_j^{i+1} \in B^k(0;i+2) \setminus K^{i+1}: 1\le j \le m(i+1)\}$ for some $m(i+1)\ge 1$ be a collection of points such that for every $$p \in \ov{B^k(0;i+2)} \setminus int(K^{i+1}), \;\;\text{dist}(p,T^{i+1})< \ep_{i+1}.$$ Choose $q_{i+1} \in \mbb C$ and $R_{i+1}>0$, sufficiently large such that $\text{dist}(\bar{B},\ov{B^k(q_{i+1};R_{i+1})})> \tilde{\delta}_{i+1}$ and $B(q_i;R_i) \subset B(q_{i+1};R_{i+1}-\delta_{i+1}).$ Since, $\bar{B} \cup \ov{B^k(q_{i+1};R_{i+1})}$ is polynomially convex and $S_0(\bar{B}) \subset B$, from Remark \textbf{\ref{properties_poly_con}}(ii) it follows that $S_{i+1}(\bar{B}) \cup \ov{B^k(q_{i+1};R_{i+1})}$ is polynomially convex. Let $0<\mu_{i+1}<\delta_{i+1}/2$ be chosen appropriately such that
\[\|S_i(z)-S_i(w)\|< \delta_{i+1}/2 \text{ whenever }\|z-w\|< \mu_{i+1}\] for every $z \in \ov{B^k(0;i+2+\delta_{i+1})}.$ Hence from Theorem \textbf{\ref{poly_con}}, there exists $\phi_1, \phi_2, \phi_3 \in \mathsf{ Aut}_0(\mbb C^k)$ such that 
\begin{align}\label{hausdorff-general-1}
\|\phi_1-S_{i+1}\|_{\bar{B}_{\delta_{i+1}}}< \delta_{i+1}/2  \text{ and } \|\phi_1-\text{Id}\|_{\ov{B^k(q_{i+1};R_{i+1})}}<\delta_{i+1}/2,
\end{align}
\begin{align}\label{hausdorff-general-2}
\|\phi_2-\text{Id}\|_{\bar{B}}< \mu_{i+1} /2 \text{ and } \phi_2(B^{i+1}) \in B^k(q_{i+1};R_{i+1}-\delta_{i+1}+\mu_{i+1}/2)\; \text{and }
\end{align}  
\begin{align}\label{hausdorff-general-3}
\|\phi_3-\text{Id}\|_{\phi_2(\bar{B}) \cup \phi_2 \circ F(i)(K^{i+1})}< \mu_{i+1} /2 \text{ and } \phi_3 \circ \phi_2(T^{i+1}) \in \phi_1^{-1}(B).
\end{align} 
\textit{Claim: } $F_{i+1}=\phi_1 \circ \phi_3 \circ \phi_2$ satisfies the induction hypothesis for $i+1.$

\medskip\no Note that by choice $F_{i+1}$ satisfy properties (ii)--(v), (vii) and (viii). Let $z \in \bar{B}$
\[ \|\phi_3\circ \phi_2(z)-z\| \le \|\phi_3 \circ \phi_2(z)-\phi_2(z)\|+\|\phi_2(z)-z\| < \mu_{i+1}.\] Also $\phi_3 \circ \phi_2(\bar{B}) \subset B_{\delta_{i+1}}$ and from the choice of $\mu_{i+1}$ it follows that 
\[ \|S_{i+1} \circ \phi_3\circ \phi_2(z)-S_{i+1}(z)\| \le \delta_{i+1}/2.\] Thus
\[ \|\phi_1\circ \phi_3\circ \phi_2(z)-S_{i+1}\circ \phi_3\circ \phi_2(z)\|< \delta_{i+1}/2.\] Hence $\|F_{i+1}-S_{i+1}\|_{\bar{B}}< \delta_{i+1}$, i.e., (i) is true. Also from relation (\ref{inverse 1a}), $ \bar{B} \subset F_{i+1}^{-1}(\bar{B}) $, i.e., $F(i)^{-1}(\bar{B}) \subset F(i+1)^{-1}(\bar{B}).$ Thus for any $z \in B^k(0;i+1)\setminus F(i+1)^{-1}(\bar{B})$ means either $z \in int(K^i) \subset K^{i+1}$ or $\text{dist}(z,F(i)^{-1}({P}^{i+1}))< \ep_{i+1}.$ But $F(i)^{-1}({P}^{i+1}) \in K^{i+1}$, hence (vi) is satisfied. Finally as $F(i)(K^{i+1}) \subset B^{i+1}$, it follows from (\ref{hausdorff-general-1})--(\ref{hausdorff-general-3}), $F_{i+1}(F(i)(K^{i+1})) \subset B^k(q_{i+1},R_{i+1})$, which proves (ix).

\medskip\no 
Hence we obtain a sequence $\seq{F} \subset \mathsf{ Aut}_0(\mbb C^k)$ such that:
\begin{itemize}
\item From property (i) and Theorem \textbf{\ref{transcendence}}, $\om{F} \cong \om{S},$
\item From property (iii) and (v), $K \subset \partial \om{F}$ where $K=\cup_{i=0}^{\infty} \partial K^i,$
\item From property (vi), $K$ is a dense subset of $\partial \om{F}.$
\end{itemize} 
Now by construction, the $2k-$dimensional measure is non--zero at every point of $K$ and hence on $\partial \om{F}.$ Thus the proof.
\end{proof}
\bibliographystyle{amsplain}
	\bibliography{ref}
	\end{document}